\newtheorem{conj}{Conjecture}[section]
\newtheorem{thm}[conj]{Theorem}
\newtheorem{lem}[conj]{Lemma}
\newtheorem{prop}[conj]{Proposition}
\newtheorem{cor}[conj]{Corollary}
\newtheorem{fact}[conj]{Fact}
\newtheorem*{rep@theorem}{\rep@title}
\newcommand{\newreptheorem}[2]{%
\newenvironment{rep#1}[1]{%
 \def\rep@title{#2 \ref{##1}}%
 \begin{rep@theorem}}%
 {\end{rep@theorem}}}
\newcommand{\conv}{\mathrm{conv}}
\newcommand{\vol}{\mathrm{Vol}}
\newcommand{\interior}{\mathrm{int}}
\newcommand{\supp}{\mathrm{supp}}
\newcommand{\R}{\mathbb{R}}
\newcommand{\N}{\mathbb{N}}
\def\s{\mathbb{S}}
\def\phi{\varphi}
\def\bee{\begin{eqnarray*}}
\def\ene{\end{eqnarray*}}
\newcommand\nnfootnote[1]{%
  \begin{NoHyper}
  \renewcommand\thefootnote{}\footnote{#1}%
  \addtocounter{footnote}{-1}%
  \end{NoHyper}
}
\newcommand{\sant}{s_p}
\newcommand{\lap}{\mathcal L_p}
\newcommand{\intn}{\int_{\R^n}}
\newcommand{\domain}{{\rm dom}}
\newcommand{\intco}{{\rm int\, co\,  }}
\newcommand{\inte}{{\rm int}}
\begin{document}

\title{On a Santal\'o point for Nakamura-Tsuji's Laplace transform inequality}

\author{Dario Cordero-Erausquin, Matthieu Fradelizi, Dylan Langharst\thanks{Supported by the FSMP Post-doctoral program.}, 
}

\date{\today}
\maketitle

\begin{abstract}
Nakamura and Tsuji recently obtained an integral inequality involving a Laplace transform of even functions that implies, at the limit, the Blaschke-Santal\'o inequality in its functional form. Inspired by their method, based on the Fokker-Planck semi-group, we extend the inequality to non-even functions. We consider a well-chosen centering procedure by studying the infimum over translations in a double Laplace transform. This requires a new look on the existing methods and leads to several observations of independent interest on the geometry of the Laplace transform. Application to reverse hypercontractivity is also given. 

\end{abstract}
\nnfootnote{Keywords: Laplace transform, heat flow, Gaussian measure, volume product, Santal\'o point, Blaschke-Santal\'o inequality, hypercontractivity. 

Mathematics Subject Classification 2020 - Primary: 52A40 and 44A10.
%52A20 and 52A21, 
Secondary: 35K05 and 52A21.
%46T12
}

%%%%%%%%%%%%%%%%%%%%%%%%%%%%%%%%%%%%%%%%%%%%%%%%%%%%%

\section{Introduction and main results}

Our central object of interest is the Laplace transform $Lf$ of a nonnegative $f:\R^n\to \R^+$, defined by 
\begin{equation} \label{eq:laplace}
Lf(y):=\int_{\R^n} f(x)\, e^{x\cdot y}\, dx, \qquad  \forall y\in \R^n.
\end{equation}
It is implicit that we work with the standard $(\R^n, \cdot, |\cdot|)$ Euclidean structure and with (nonnegative) Borel measurable functions only.  We are looking at $L^p-L^q$ bounds for this transform, in the range $0<p<1$, where interactions with convex geometry take place. So, in the rest of the paper, our parameters will be as follows:
\begin{equation}\label{eq:condpq}
p\in(0,1) , \quad q= \frac{p}{p-1}\in (-\infty, 0).   
\end{equation}
Accordingly, $\|Lf\|_{L^q(\R^n)}>0$ amounts to some 'positivity' improvement, in the sense of Borell~\cite{Bor82}. Of course, we are mainly interested in functions that are not identically zero (almost everywhere), a case we denote by $f\equiv 0$. In other words, we write
$$f\not\equiv 0 \Leftrightarrow \{f>0\} \textrm{ has not measure zero}\Leftrightarrow \int f \in (0, \infty] \Leftrightarrow Lf \in (0, \infty].$$
Note that $Lf$ is a convex function with values in $[0, \infty]$, and  when 
$f\not\equiv 0$, one has that $\log Lf$ is a convex function with values in $\R\cup\{\infty\}$, by H\"older's inequality. In particular, $(Lf)^q$ is then a log-concave function, a property that will play a central r\^ole in our study. Centered Gaussian functions, that is functions of the form $\lambda\cdot\,  \gamma_\sigma$, $\lambda, \sigma>0$, where
\begin{equation} 
\label{eq:gauss}
\gamma_\sigma (x):=e^{-|x|^2/(2\sigma)}, \qquad \forall x\in \R^n,  
\end{equation}
will also be central. Let us denote by $\tau_a f$  the translate of a function $f$, defined on $\R^n$, by a vector $a\in \R^n$:
$$\tau_a f (x) := f(x-a), \qquad \forall x\in \R^n.$$ 
With this notation in hand, we can state one of our results. 

\begin{thm}[Reverse $L^p$ bound for the Laplace transform]
\label{t:laplace}
For any $f\in L^p(\R^n)$ nonnegative, we have
$$\sup_{z\in \R^n} \|L(\tau_z f)\|_{L^q(\R^n)} \ge C_p\, \|f\|_{L^p(\R^n)},$$
where $C_p=[p^\frac{1}{p}\left(-q\right)^{-\frac{1}{q}}]^{\frac{n}{2}} \, (2\pi)^{\frac{n}{q}}$
is so that there is equality when
$f$ is a (centered) Gaussian function. 
If the supremum is finite, it is uniquely attained at some point $z$, and this point is zero if
$\int_{\R^n} x\,  L(f)^q(x) \, dx=0$. 

Moreover, for any $f\in L^p(\R^n)$ nonnegative with $\intn x \, f(x)^p \, dx = 0$, we have 
\begin{equation}
\label{eq:laplace_lp_center}
\|Lf\|_{L^q(\R^n)} \ge C_p\,  \|f\|_{L^p(\R^n)},    
\end{equation}
with equality when $f$ is a centered Gaussian. 
\end{thm}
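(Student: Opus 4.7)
The plan is to split the argument into three stages: (i) reduce the supremum inequality to the centered ``moreover'' inequality \eqref{eq:laplace_lp_center} via a translation, (ii) establish \eqref{eq:laplace_lp_center} itself through a Fokker-Planck / heat-flow monotonicity argument adapted from Nakamura-Tsuji to non-even functions, and (iii) characterize the unique maximizer of $z \mapsto \|L(\tau_z f)\|_{L^q}$ through log-convexity of the Laplace transform.

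For (i), given any nonnegative $f\not\equiv 0$, I set $z^\sharp := \intn x\, f(x)^p\, dx\,/\,\intn f(x)^p\, dx$, so that $\tilde f := \tau_{z^\sharp} f$ satisfies $\intn x\, \tilde f(x)^p\, dx = 0$. Applying \eqref{eq:laplace_lp_center} to $\tilde f$ and using translation invariance of $\|\cdot\|_{L^p}$ gives
$$\sup_{z\in\R^n} \|L(\tau_z f)\|_{L^q} \,\ge\, \|L\tilde f\|_{L^q} \,\ge\, C_p\,\|\tilde f\|_{L^p} \,=\, C_p\,\|f\|_{L^p}.$$
The equality case for a centered Gaussian $f=\lambda\, \gamma_\sigma$ follows from the direct computation $L\gamma_\sigma(y) = (2\pi\sigma)^{n/2} e^{\sigma|y|^2/2}$, which also pins down the exact constant $C_p$.

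For (ii), the heart of the matter, I would follow Nakamura-Tsuji by introducing a Fokker-Planck semigroup $P_t$ and tracking a functional of the form $\Phi(t) = \|L(P_t f)\|_{L^q}/\|P_t f\|_{L^p}$, adjusted by appropriate time-dependent Gaussian renormalizations. Differentiating in $t$ and integrating by parts, one expects to combine the terms produced using the convexity of $\log Lf$ (noted in the introduction) together with Cauchy-Schwarz to obtain monotonicity of $\Phi$. In the limit $t\to\infty$ (after rescaling), $P_t f$ approaches a Gaussian for which $\Phi = C_p$; monotonicity then yields $\Phi(0)\ge C_p$, which is \eqref{eq:laplace_lp_center}. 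The main obstacle, and the novel ingredient beyond the Nakamura-Tsuji even case, is that the odd-order cross terms that previously vanished by evenness must now be handled by the weaker hypothesis $\intn x\, f^p = 0$. Since this centering is not in general preserved along the naive heat flow, one must either pass to a translation-compensated semigroup that keeps the $f^p$-centroid at the origin for all $t$, or design $\Phi$ so that only its initial-time centering is needed to absorb the spurious boundary terms. This is where the novel analysis will have to concentrate.

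For (iii), the identity $L(\tau_z f)(y) = e^{z\cdot y}\, Lf(y)$ gives
$$F(z) \,:=\, \|L(\tau_z f)\|_{L^q}^q \,=\, \intn e^{qz\cdot y}\,(Lf)^q(y)\, dy,$$
which is the Laplace transform at $qz$ of the nonnegative log-concave function $(Lf)^q$. Hence $F$ is strictly log-convex wherever finite and admits at most one minimizer; since $q<0$, that minimizer is the unique maximizer of $z\mapsto \|L(\tau_z f)\|_{L^q}$. The first-order condition $\nabla F(z^*)=0$ unfolds to $\intn y\,(L(\tau_{z^*} f))^q(y)\, dy = 0$, yielding the centroid characterization: $z^*=0$ if and only if $\intn y\,(Lf)^q(y)\, dy = 0$.
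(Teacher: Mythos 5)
Your outline correctly identifies the three pieces of the statement and part (iii) is essentially right (the quantity $\|L(\tau_z f)\|_{L^q}^q = L\bigl((Lf)^q\bigr)(qz)$ is the Laplace transform of a log-concave function, hence strictly log-convex, giving uniqueness and the first-order characterization). But the core of your plan, stage (ii), is where the proposal has a genuine gap, and stage (i) is both backwards relative to the paper's logic and incomplete.

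In stage (ii) you acknowledge the central difficulty yourself: the condition $\int x\, f^p\, dx = 0$ is not preserved by the heat or Fokker--Planck flow, and you leave it open whether a ``translation-compensated semigroup'' or a clever design of $\Phi$ would repair this. Neither of your two proposed remedies is carried out, and neither is what the paper does. The paper's resolution is to \emph{not} fix a centering at all: it defines $Q(t,z) := \log \int \lap(\tau_z f_t)$ as a function of both time and translation, and proves the Hamilton--Jacobi-type inequality $\partial_t Q + \frac{p}{-2q}\lvert\nabla_z Q\rvert^2 \ge 0$ (Theorem~\ref{t:main}, relying on the Brascamp--Lieb variance inequality and Cram\'er--Rao). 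This is \emph{weaker} than $\partial_t Q \ge 0$ pointwise in $z$, which is exactly why no centering is required at positive times: the term $\lvert\nabla_z Q\rvert^2$ vanishes only at the minimizer $s_p(f_t)$, so one gets $\alpha'(t)\ge 0$ for $\alpha(t) := \inf_z Q(t,z) = Q(t, s_p(f_t))$ by the chain rule, since $\nabla_z Q = 0$ there. This idea of tracking the infimum over $z$ along the flow, rather than a centered trajectory, is the key ingredient your outline is missing, and without it stage (ii) does not go through.

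Stage (i) reduces the $\sup_z$ inequality to the centered one \eqref{eq:laplace_lp_center}. Two issues. First, the barycenter $z^\sharp = \int x f^p / \int f^p$ need not exist for $f\in L^p$ (finiteness of $\int f^p$ does not give $\int\lvert x\rvert f^p < \infty$), so the reduction is not valid for all $f$ in the stated class. Second, and more structurally, the paper goes in the opposite direction: the flow argument gives the $\sup_z$ (equivalently $\inf_z$) inequality directly (Theorem~\ref{t:main_sant}), and \emph{then} the centered inequality (Theorem~\ref{t:main_sant_2}) is deduced from it by a separate translation argument that exploits the inequality $\int f e^{s_0\cdot x}\ge \int f$ when $\int x f = 0$. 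Finally, in stage (iii) strict log-convexity gives uniqueness but not existence of the minimizer; the paper has to work (Fact~\ref{fact:mf}, Proposition~\ref{prop:exist}) to show the inf is attained whenever it is positive, via the boundary behavior of the Laplace transform.
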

Nakamura and Tsuji~\cite{NT24} have proved~\eqref{eq:laplace_lp_center} in the case where $f$ is even. Generalizing it to non-even functions, under appropriate centering, will require substantial work. But before going forward, we need to step back and explain the motivation for our results which stems from convex geometry.

The Blaschke-Santal\'o inequality asserts that for every convex body (compact, convex set with non-empty interior) $K\subset \R^n$ that is symmetric (i.e. $K=-K$) one has,
\begin{equation}\label{eq:s1}
\vol(K)\vol(K^\circ)    \le \vol(B_2^n)^2,
\end{equation}
where $K^\circ=\{y\in \R^n \; ; \; x\cdot y\le 1, \forall x\in K\}$ and $B_2^n=\{|\cdot|\le 1\}$. See \cite{SLA49,MP90} for the inequality.

K. Ball~\cite{Ball_thesis} put forward in his thesis a functional form of the Blaschke-Santal\'o inequality: given a convex function $\psi:\R^n\to \R\cup\{+\infty\}$, with $0<\int e^{-\psi}<\infty$, it holds, when $\psi$ is even, that
\begin{equation}\label{eq:funct_s1}
M(e^{-\psi}):=\int_{\R^n} e^{-\psi} \int_{\R^n} e^{-\psi^\ast} \le (2\pi)^{n}=M(e^{-|x|^2/2}),
\end{equation}
where $\psi^\ast$ denotes the Legendre's transform of $\psi$, 
$$\psi^\ast(y):=\sup_{x} x\cdot y - \psi(x) , \qquad \forall y\in \R^n.$$
 Note that $\psi(x)=|x|^2/2$ is the unique fixed point for the Legendre's transform. Furthermore, we can also remove the assumption that $\psi$ is convex, since we have in general $\psi^{\ast \ast}\le \psi$ and $\psi^{\ast\ast \ast} = \psi^\ast$.

 One may actually prefer to work with log-concave functions. A function $f:\R^n\to [0,\infty)$ is log-concave if $f=e^{-\psi}$, for some convex function $\psi:\R^n\to \R\cup\{+\infty\}$. To any nonnegative $f$, we can associate its polar by
\begin{equation}
f^{\circ}(y):=\inf_{x\in\R^n}\frac{e^{- x\cdot y}}{f(x)} = e^{-\psi^\ast(y)},
\label{eq:polar}
\end{equation}
which is always a log-concave function; \eqref{eq:funct_s1} becomes for an even nonnegative function $f$ with $0<\int f <\infty$,
\begin{equation}\label{eq:funct_s2}
M(f)=\int_{\R^n} f \ \int_{\R^n} f^\circ \le (2\pi)^{n}=M(e^{-|x|^2/2}).
\end{equation}
%There are various ways 
One can easily pass from~\eqref{eq:funct_s2} to~\eqref{eq:s1}; we refer to~\cite{AAKM04,FM07} for background and references.

It was kind of folklore that one could try to approach the Legendre transform, or the polarity transform, by a limit of Laplace transforms, by interpreting  the '$\sup$' in the definition of $\psi^\ast$ as some $L^\infty$ norm (see for instance the discussion in T.~Tao's blog \cite{T07}; in the case of convex bodies, an $L^p$ approach was proposed by Lutwak and Zhang~\cite{LZ97}) and a Laplace transform approach was given by Berndtsson, Mastrantonis, and Rubinstein ~\cite{BMR24}. Let us also mention in passing that Klartag's works (e.g. in \cite{BK06}) have put forward the importance, in its own, of Laplace's transform  in the study of log-concave functions and convex bodies in high-dimensions. 

Recently, Nakamura and Tsuji~\cite{NT24} made the \emph{tour de force} of showing that such a path to the Blaschke-Santal\'o inequality is indeed possible by discovering a completely new, analytic, approach which gives in particular that $M(f)$ is non-decreasing when the even function $f$ evolves along the Fokker-Planck semi-group that interpolates between $f$ and a Gaussian function (see also \cite{CEGNT24}).
More precisely, they obtained, by evolution along the Fokker-Planck semi-group,  the following,  stronger, new, inequality for the Laplace transform, which gives at the limit (when $p\to 0^+$, $q\to 0^-$) the inequality \eqref{eq:funct_s2}: for a nonnegative  even function $f$, 
\begin{equation}\label{eq:NT}
\Big(\int_{\R^n}f \Big)\,  \Big(\intn \big(L(f^{1/p})(x/p)\big)^q\, dx\Big)^{-p/q} \le %[p(1-p)^\frac{1-p}{p}]^{-p\frac{n}{2}} \,
[p^{-p}(1-p)^{1-p}]^{\frac{n}{2}}\,
(2\pi)^{n(1-p)},
\end{equation}
with equality when $f=e^{-|x|^2/2}$. 
The term in~\eqref{eq:NT} with the Laplace transform together with the unusual range of the parameters~\eqref{eq:condpq} might be a bit hard to digest at first, but before we return to it,  we ask the reader to accept that it is not that bad and in fact quite natural. Of course, the normalization $x/p$ plays no serious role but is a convenient scaling when we want to let $p$ go to zero, as we will see.  

In this work, we aim at extending \eqref{eq:NT} to non-even functions; we establish in particular:

\begin{thm} \label{t:main_sant}
Let $f:\R^n\to [0,\infty)$ be a nonnegative function. Then, 
    $$ \Big(\intn f \Big)\, \inf_z \Big( \intn \mathcal (L(\tau_z f^{1/p})(x/p))^q \, dx \Big)^{-p/q} \le %[p(1-p)^\frac{1-p}{p}]^{-p\frac{n}{2}} \,
    [p^{-p}(1-p)^{1-p}]^{\frac{n}{2}}\, (2\pi)^{n(1-p)},$$
with equality for any Gaussian function.
\end{thm}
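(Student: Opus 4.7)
The plan is to deduce Theorem~\ref{t:main_sant} directly from Theorem~\ref{t:laplace} via the substitution $g = f^{1/p}$ and the change of variables $y = x/p$ in the Laplace transform's argument. The key observation is that the supremum over translations appearing in Theorem~\ref{t:laplace} is precisely the centering mechanism that, after rescaling, becomes the infimum in Theorem~\ref{t:main_sant}. So once Theorem~\ref{t:laplace} is granted, the remaining work is essentially algebraic bookkeeping together with the verification that the constants match.

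Concretely, I would set $g := f^{1/p}$, so that $\int f = \|g\|_{L^p}^p$, and rewrite the inner quantity as
\[
\intn \big(L(\tau_z f^{1/p})(x/p)\big)^q \, dx \;=\; p^n\, \|L(\tau_z g)\|_{L^q}^q.
\]
Since $q < 0$ and $-p/q = 1-p > 0$, the map $t \mapsto t^{-p/q}$ is increasing on $(0,\infty)$ while $t \mapsto t^{-p}$ is decreasing, so passing the $(\cdot)^{-p/q}$ outside and converting $\inf$ to $\sup$ gives
\[
\inf_z \Big(\intn (L(\tau_z f^{1/p})(x/p))^q \, dx\Big)^{-p/q} \;=\; p^{-np/q}\, \big(\sup_z \|L(\tau_z g)\|_{L^q}\big)^{-p}.
\]
Theorem~\ref{t:laplace} applied to $g$ gives $\sup_z \|L(\tau_z g)\|_{L^q} \ge C_p\, \|g\|_{L^p}$; raising to the power $-p$ (which reverses the inequality since $-p < 0$) and multiplying by $\int f = \|g\|_{L^p}^p$ then yields the desired inequality with constant $p^{-np/q}\, C_p^{-p}$.

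It remains to identify this constant, which I would check by a direct logarithmic computation using $q = p/(p-1)$, $-p/q = 1-p$, and $-q = p/(1-p)$, to confirm that $p^{-np/q}\, C_p^{-p} = [p^{-p}(1-p)^{1-p}]^{n/2}(2\pi)^{n(1-p)}$. The Gaussian equality case transfers automatically: for $f = \lambda\, \gamma_\sigma$ the function $g = \lambda^{1/p}\gamma_{p\sigma}$ is a centered Gaussian, so the first-moment condition in Theorem~\ref{t:laplace} holds by symmetry, equality there occurs at $z = 0$, and propagates through the identities above. I do not anticipate a real obstacle; the entire analytic substance lives in Theorem~\ref{t:laplace}, and Theorem~\ref{t:main_sant} is its packaged reformulation under the scaling $x \mapsto x/p$. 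The only place to be careful is the sign bookkeeping around the sup/inf swap, which is routine once $-p/q > 0$ and $-p < 0$ are kept in mind.
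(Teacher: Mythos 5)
Your algebraic manipulation is correct: with $g=f^{1/p}$, the change of variables $y=x/p$ gives $\intn(L(\tau_z f^{1/p})(x/p))^q\,dx = p^n\|L(\tau_z g)\|_{L^q}^q$, the sign bookkeeping around $\inf/\sup$ is sound (since $-p/q = 1-p>0$ and $-p<0$), and one can indeed check that $p^{-np/q}C_p^{-p} = [p^{-p}(1-p)^{1-p}]^{n/2}(2\pi)^{n(1-p)}$. But the overall argument is \emph{circular}: in this paper, Theorem~\ref{t:laplace} is not an independently established result that can be invoked as a black box — the paper explicitly remarks that ``Theorem~\ref{t:laplace} follows directly from Theorems~\ref{t:main_sant} and~\ref{t:main_sant_2} applied to the nonnegative function $f^p$.'' So Theorem~\ref{t:laplace} is a corollary of the statement you are trying to prove, and deducing Theorem~\ref{t:main_sant} from it establishes nothing. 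You have shown (correctly) that the two inequalities are equivalent reformulations of each other, but you have not provided a proof of either.

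The paper's actual proof of Theorem~\ref{t:main_sant} goes through Theorem~\ref{t:main_2}: one lets $f$ evolve along the heat/Fokker-Planck semi-group $f_t$, shows that $t\mapsto M_p(f_t)$ is nondecreasing via a Brascamp--Lieb plus Cram\'er--Rao argument applied to $\partial_t Q(t,z)$ (this is where the analytic content lives, in Section~\ref{sec:semi}), and then identifies the limit as $t\to\infty$ with $M_p(\gamma_1)$. Your proposal contains no engagement with this — or any other — mechanism that would actually force the inequality; everything is packaged into an appeal to Theorem~\ref{t:laplace}, which is downstream, not upstream, of the target. As a secondary point, your reduction silently assumes $g=f^{1/p}\in L^p$, i.e.\ $\intn f<\infty$; the case $\intn f=\infty$ needs a separate (if short) argument, handled in the paper via Fact~\ref{final:f} and the convention $\infty\cdot 0=0$.
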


Here and in the rest of the paper we are consistent with monotone convergence and assume that $\infty\times 0=0$. 

In order to  shed some light on the previous result, let us recall how the non-symmetric Blaschke-Santal\'o inequality is handled. Indeed, the Blaschke-Santal\'o inequality cannot hold as stated in~\eqref{eq:s1} in the non-symmetric case. For a convex body $K$, it is easily checked that
$$\vol(K^\circ) = \infty \Longleftrightarrow 0 \notin {\rm int} K.$$
But just having zero in the interior is certainly not sufficient for the Blaschke-Santal\'o inequality to hold. Indeed, if $a$ is a point in $K$ that approaches the boundary of $K$, then $\vol((K-a)^\circ) \to \infty $. So in order to hope to have a bound for $\vol(K)\vol(K^\circ)$, one certainly needs to wisely translate the body $K$ before taking its polar. This indeed works: we have \begin{equation}
\label{eq:santalo_convex}
\inf_z \vol(K)\vol((K-z)^\circ)\le \vol(B_2^n)^2,\end{equation}
and, also, that the infimum is obtained at a unique point, the \textit{Santal\'o point of $K$}, characterized as the unique vector $s(K)$ so that $(K-s(K))^\circ$ has center of mass at the origin. So~\eqref{eq:s1} holds when $K^\circ$ has center of mass at the origin; but it turns out that it also holds when $K$ has center of mass at the origin. We refer to~\cite{CMP85,FGSZ24, FMZ23} for background and further developments. 

Analogously, in the functional case, since
\begin{equation}\label{eq:translate}
    (\tau_z f)^\circ(x)=f^\circ(x)\,  e^{-x\cdot z},
\end{equation}
we can make the left-hand side of~\eqref{eq:funct_s2} tend to infinity by letting $|z|\to \infty$. Therefore, a non-even function $f$ must be wisely translated for \eqref{eq:funct_s2} to hold.
Using the geometric version of the Blaschke-Santal\'o inequality for non-symmetric bodies, Artstein, Klartag and Milman \cite{AAKM04} proved, when $f$ is a nonnegative function with $0<\int_{\R^n} f < \infty$, that
\begin{equation}\label{eq:func_ns1}
M(f):=\left(\int_{\R^n} f\right)\inf_z\left(\int_{\R^n} (\tau_{z}f)^\circ \right) \leq (2\pi)^n.
\end{equation}
Actually, since $f\le f^{\circ \circ}$ (with equality when $f$ is log-concave and upper-semi continuous) one can reduce the study to log-concave functions. Then, when $f$ is log-concave, they established that the infimum in~\eqref{eq:func_ns1} is attained at a uniquely defined point $s(f) = \text{argmin}_{z\in\R^n} \int_{\R^n} (\tau_zf)^\circ $ which is called the Santal\'o point of $f$. Using~\eqref{eq:translate}
one can note that $s(f)$ is the origin, if, and only if, the barycenter of $f^\circ$ is the origin; this is certainly the case when $f$, and thus $f^\circ$, is even. Recall that the barycenter of a nonnegative function is given by $${\rm bar}(f):=\frac{\int_{\R^n}x\, f(x)dx}{\int_{\R^n} f(x)\, dx};$$
the condition $0<\intn (1+|x|) f(x)\, dx < \infty$ will always be implicitly enforced when we speak of barycenter of an arbitrary nonnegative function $f$. 

Analogously to the volume case, it was shown by  Lehec \cite{JL08,JL09} (see also \cite{FGSZ24}) that the Santal\'o point of $f$ can be replaced by the barycenter, that is, for a nonnegative function $f$ having its barycenter at zero, it holds that, 
\begin{equation}
\label{eq:barycenter_origin}
\Big(\int_{\R^n}f \Big)\,  \Big(\int_{\R^n} f^\circ \Big) \le (2\pi)^n.\end{equation}
This extends on the case shown by Artstein, Klartag and Milman \cite{AAKM04} for log-concave $f$ with $\int f\in (0,\infty)$. Generalizations of the functional Blaschke-Santal\'o inequality to multiple functions were recently purposed in \cite{KW22,KS23}.

Following Nakamura and Tsuji, we now consider the $p$-Laplace transform of a non-negative function $f$, where $p,q$ are in the range~\eqref{eq:condpq}, defined by 
$$\lap (f)(x) := (L(f^{1/p}))^q (x) = \Big( \intn f^{1/p}(y)\,  e^{x\cdot y}\, dy\Big)^q,
\qquad \forall x\in \R^n
.$$
To avoid trivial situations and abstract nonsense, we will most of the time assume that $f\not\equiv 0$, since:
$$\textrm{either $\lap(f)$  is finite on $\R^n$ (when $f\not\equiv 0$), \  or else $\lap(f)\equiv \infty$ (when $f\equiv 0$)}.$$
Since $q<0$, the function $\lap (f) = e^{q\log Lf^{1/p}}$ is log-concave when $f\not\equiv 0$. Actually, we will work with $\lap (f)(x/p)$, but
the scaling $x/p$ is a bit immaterial when we work at fixed $p$. Its justification comes from the limit when $p\to 0^+$ since we have for almost every $x\in \R^n$ (see~Fact~\ref{f:supp_converge} for the precise statement) that 
\begin{equation}
\label{eq:limit_p}
\begin{split}
\lim_{p\to 0^{+}}\lap(f)(x/p)&=\lim_{p\to 0^+}\left(\int_{\R^n} \left(e^{x\cdot y }f(y)\right)^\frac{1}{p}dy\right)^\frac{p}{p-1}
\\
&={\rm ess}\!\inf_{y\in\R^n}\frac{e^{-x\cdot y}}{f(y)}\ge \inf_{y\in\R^n}\frac{e^{-x\cdot y}}{f(y)} = f^{\circ}(x).
\end{split}
\end{equation}
Let us mention here another analogy with the polar transform:
\begin{equation}\label{eq:trans}
    \lap(\tau_z f)(x)=\lap (f) (x) \, e^{q x\cdot z},
\end{equation}
which should be compared to~\eqref{eq:translate}. From \eqref{eq:trans}, one obtains
$$\intn \lap(\tau_z f)(x)dx = L(\lap(f))(qz),$$
which is somehow a kind of double Laplace transform. Note also that
\begin{equation}\label{eq:trans2}
    \lap(f(y)\,  e^{z\cdot y})=\tau_{-z/p}\lap (f) .
\end{equation}
We define the \textit{$L^p$ functional volume product} of a nonnegative function $f$ as
\begin{eqnarray*}
M_p(f)&:=&\Big(\int_{\R^n}f \Big)\, \inf_z \Big(\int_{\R^n}{\mathcal  L}_p (\tau_z f)(x/p)\, dx \Big)^{-p/q} \\
&= & p^{-np/q}  \Big(\int_{\R^n}f \Big)\, \inf_z  \Big(\int_{\R^n}{\mathcal  L}_p (\tau_z f) \Big)^{-p/q} \\
& =&  p^{-np/q}  \Big(\int_{\R^n}f \Big)\, \Big(\inf L \lap(f)\Big)^{-p/q}.
\end{eqnarray*}
Note that $M_p$ is homogeneous and linear invariant, in the sense that, for any constant $\lambda>0$ and any invertible linear application $A\in GL_n(\R)$, we have
\begin{equation}\label{eq:inv}
M_p(f)=M_p(f_{\lambda,A}), \quad {\rm where}
\ 
f_{\lambda, A}(x) = \lambda f(Ax).
\end{equation}
Its action over the Gaussian functions~\eqref{eq:gauss} is given by
\begin{equation*} 
\label{eq:limit_fixed}
{\mathcal  L}_p (\gamma_\sigma)=c_p ^\frac{n}{2}\, \gamma_{\frac1{-\sigma p q}},\quad c_p = (\sigma p2\pi)^q. 
\end{equation*}
This suggest other possible scalings; for instance $\gamma_1$ is an eigenfunction of  $f\mapsto \lap(f)(x/\sqrt{-pq})$, but the limit process is simpler with $x/p$. 
Anyway, $M(\gamma_\sigma)=M(\gamma_1)$ for any $\sigma$. The invariance property~\eqref{eq:inv} suggests to ask for the supremum of $M_p$; this is precisely the content of our Theorem~\ref{t:main_sant} above:
for any nonnegative function $f$, 
    $$ M_p(f) \le M_p(\gamma_1).$$
Note that, in particular, if $\int_{\R^n}f = \infty$, then $\inf_z \int_{\R^n}\lap(\tau_z f) =0$.

Pushing forward the analogy with the Blaschke-Santal\'o inequality, we would like to know whether the infimum over translations is attained. It is the case in non-pathological situations. Actually, the infimum is always finite,  and we have a nice dichotomy.

\begin{prop}[$p$-Laplace-Santal\'o point]\label{prop:sant}
Let $f:\R^n\to [0,\infty)$ be an nonnegative function with $f\not\equiv 0$ and  consider the convex function
$$\psi(z):= \int_{\R^n} {\mathcal L}_p(\tau_z f)
=\int_{\R^n}{\mathcal L}_p(f)(x)\, e^{q \, x\cdot z}\, dx = L\lap(f)(qz).$$ 
Then $\psi$ is always proper, that is there exists $z_0$ such that $\psi(z_0)<\infty$ or equivalently $\inf \psi <\infty$. 

Moreover, either
$$\inf_{z\in \R^n} \psi(z) = 0$$
or else $\psi$ attains its (non-zero) minimum, at a  unique point that we denote by
\begin{equation} \label{eq:deflpoint}
\sant(f) := {\rm argmin}\Big\{ \int {\mathcal L}_p(\tau_zf) \; ; \; z\in \R^n\Big\}.
\end{equation}
The point $\sant(f)$ is then characterized by the property that
$${\rm bar} \big({\mathcal L}_p(\tau_{\sant(f)}f)\big)=0.$$
\end{prop}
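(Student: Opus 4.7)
The plan is to verify in turn: (i) properness of $\psi$; (ii) the dichotomy between $\inf\psi=0$ and a uniquely-attained positive minimum; (iii) the barycenter characterization. Throughout, the key object is the log-concave function $g:=\lap(f)$ together with the identity $\psi(z)=\int g(x)\,e^{qx\cdot z}dx$.

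\textit{Properness.} Since $f\not\equiv 0$, the set $\{f>0\}$ has positive Lebesgue measure, so its convex hull has nonempty interior. I would pick $z_0$ so that $0$ lies in the interior of $\conv\{F>0\}$ with $F:=\tau_{z_0}f$. For every unit vector $u$, this forces $\int_{\{u\cdot y>\delta\}}F^{1/p}\,dy>0$ for some $\delta=\delta(u)>0$, and hence $LF^{1/p}(Ru)\to+\infty$ as $R\to+\infty$ by monotone convergence, so $\lap(F)(Ru)\to 0$ in every direction. Log-concavity of $\lap(F)$ upgrades this pointwise-in-direction decay to boundedness of every superlevel set: an unbounded convex superlevel set would contain a half-line, and a standard midpoint argument anchored at a point $y_0$ with $\lap(F)(y_0)>0$ would force $\lap(F)$ to stay bounded below along a parallel ray, contradicting the decay above. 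A log-concave function with bounded superlevel sets has exponential tails and is integrable, which gives $\psi(z_0)=\int\lap(F)<\infty$.

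\textit{Dichotomy and uniqueness.} Suppose $\inf\psi>0$. If $0\notin\inte\conv\supp g$, Hahn--Banach produces a unit vector $v$ with $g=0$ almost everywhere on $\{v\cdot x<0\}$; writing $\tilde g(x):=g(x)e^{qx\cdot z_0}\in L^1$ (from the properness step), the factor $e^{qtv\cdot x}$ is at most $1$ on $\supp g$ for $t\ge 0$ and tends pointwise to $0$ outside the null hyperplane $\{v\cdot x=0\}$, so dominated convergence on $\psi(z_0+tv)=\int\tilde g(x)e^{qtv\cdot x}dx$ gives $\psi(z_0+tv)\to 0$, contradicting $\inf\psi>0$. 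Therefore $0\in\inte\conv\supp g$, and for each $v\neq 0$ there exists $\delta=\delta(v)>0$ with positive $g$-mass on both $\{v\cdot x<-\delta\}$ and $\{v\cdot x>\delta\}$; bounding the integral on either half-space yields $\psi(tv)\to+\infty$ as $|t|\to\infty$. Since a convex lower semi-continuous function blowing up along every line from the origin is coercive (compactness of the unit sphere plus lsc of the recession function), $\psi$ attains its positive minimum. Uniqueness follows from the strict-convexity identity
\begin{equation*}
\psi(z_1)+\psi(z_2)-2\psi\!\left(\tfrac{z_1+z_2}{2}\right)=\int g(x)\bigl(e^{qx\cdot z_1/2}-e^{qx\cdot z_2/2}\bigr)^2 dx\ge 0,
\end{equation*}
whose vanishing forces $(z_1-z_2)\cdot x=0$ for $g$-a.e.\ $x$, impossible when $0\in\inte\conv\supp g$ unless $z_1=z_2$.

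\textit{Barycenter characterization.} Because $g\,e^{qx\cdot\sant(f)}\in L^1$ is log-concave, it has exponential tails, so $\psi$ is finite and $C^\infty$ on a neighborhood of $\sant(f)$. Differentiating under the integral sign gives $\nabla\psi(z)=q\int x\,\lap(\tau_zf)(x)\,dx$ on that neighborhood, and $\nabla\psi(\sant(f))=0$ reads precisely $\mathrm{bar}(\lap(\tau_{\sant(f)}f))=0$. The main obstacle is the properness step: converting the geometric condition ``$0\in\inte\conv\{F>0\}$'' into pointwise-in-direction decay of $\lap(F)$ at infinity, and then using log-concavity to upgrade this to bounded superlevel sets and exponential tails. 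Everything else is a clean interplay between convex analysis and the identity $\psi(z)=\int g(x)e^{qx\cdot z}dx$.
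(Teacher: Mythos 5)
Your overall strategy mirrors the paper's: properness via the geometry of $\conv(\supp f)$, the dichotomy according to whether $0$ lies in the interior of the support of $\lap(f)$, and the barycenter characterization as the critical-point equation $\nabla\psi=0$. Where you depart is in re-deriving the needed convex-analytic facts directly rather than invoking the paper's Fact~\ref{f:integrable} and Fact~\ref{fact:mf}, and most notably in the uniqueness argument. The paper imports strict convexity of $Lg$ (with $g=\lap(f)$) from Klartag's observation that the Hessian of $\log Lg$ is a covariance matrix; you instead verify the elementary quadratic identity
\begin{equation*}
\psi(z_1)+\psi(z_2)-2\psi\!\left(\tfrac{z_1+z_2}{2}\right)=\int g(x)\bigl(e^{qx\cdot z_1/2}-e^{qx\cdot z_2/2}\bigr)^2\,dx,
\end{equation*}
whose vanishing forces $\supp g$ into a hyperplane. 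This yields strict convexity of $\psi$ with no differentiation, a tidy simplification. Your properness step is also the direct version of Proposition~\ref{prop:finiteLp}: you show $0\in\intco(\supp F)\Rightarrow\int\lap(F)<\infty$, while the paper argues by contrapositive.

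One imprecision worth flagging: in the properness step you only establish $\lap(F)(Ru)\to 0$ along rays issued from the origin, whereas the midpoint argument you invoke produces a ray starting at $\tfrac{y_0+a}{2}$ rather than at the origin, so ``contradicting the decay above'' does not follow as written. The repair is short: the same monotone-convergence bound applied to $F^{1/p}(y)e^{a\cdot y}$ (which is in $L^1$ whenever $\lap(F)(a)>0$) yields $LF^{1/p}(a+Ru)\to\infty$ for every such anchor $a$, so the decay of $\lap(F)$ in fact holds along rays from any point of positive value; this is precisely condition $(iii)$ in Fact~\ref{f:integrable}, and the contradiction then goes through.
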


If $f$ is even and $\lap(f)$ is integrable, it follows that $\sant(f)$ is equal to zero, by the previous proposition, since $\lap(f)$ is even (see Proposition~\ref{p:even}). Thus,  Theorem~\ref{t:main_sant} implies \eqref{eq:NT}.

Interestingly enough, like in the case of the functional the Blaschke-Santal\'o inequality, the inequality is valid when $f$ has barycenter at the origin.

\begin{thm}
\label{t:main_sant_2}
Let $f:\R^n\to [0,\infty)$ be a nonnegative function. If either $f$ or $\lap(f)$ has $0$ as barycenter, then we have
$$M_p(f)=\Big(\int_{\R^n}f \Big)\,  \Big(\int_{\R^n}{\mathcal  L}_p (f)(x/p)\, dx \Big)^{-p/q} \le M_p(\gamma_1),$$
with equality when $f$ is a centered Gaussian. 
\end{thm}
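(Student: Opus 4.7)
The plan is to split by the barycenter hypothesis and reduce the case $\mathrm{bar}(f)=0$ to the case $\mathrm{bar}(\lap(f))=0$ via a modulation trick, in the spirit of Lehec's barycentric form of the functional Blaschke-Santal\'o inequality.

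\emph{Case 1: $\mathrm{bar}(\lap(f))=0$.} My strategy here is to invoke Proposition~\ref{prop:sant} to show that the infimum defining $M_p(f)$ is attained at $z=0$, so that Theorem~\ref{t:main_sant} yields the claim immediately. Recall the convex function $\psi(z)=\int_{\R^n}\lap(f)(x)\,e^{qx\cdot z}\,dx$ of that proposition. Jensen's inequality applied to the probability measure $\lap(f)/\int\lap(f)$ and the convex function $x\mapsto e^{qx\cdot z}$ gives
\[\psi(z)/\psi(0)\ge e^{qz\cdot\mathrm{bar}(\lap(f))}=1,\]
so $0$ is a global minimizer of $\psi$; since $\psi(0)=\int\lap(f)>0$, the alternative $\inf\psi=0$ from Proposition~\ref{prop:sant} is ruled out, and uniqueness of the minimizer forces $\sant(f)=0$.

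\emph{Case 2: $\mathrm{bar}(f)=0$.} Here the plan is to modulate $f$ into a function $h$ fulfilling the hypothesis of Case 1, and then to compare the $z=0$ expressions for $f$ and $h$. Setting $v:=p\,\mathrm{bar}(\lap(f))\in\R^n$ and $h(y):=f(y)e^{v\cdot y}$, formula~\eqref{eq:trans2} yields $\lap(h)=\tau_{-v/p}\lap(f)$. A direct change of variables gives
\[\mathrm{bar}(\lap(h))=\mathrm{bar}(\lap(f))-v/p=0\quad\text{and}\quad\int_{\R^n}\lap(h)(x/p)\,dx=\int_{\R^n}\lap(f)(x/p)\,dx,\]
while Jensen applied to $f/\int f$ and the convex function $y\mapsto e^{v\cdot y}$ yields $\int h \ge (\int f)\,e^{v\cdot\mathrm{bar}(f)}=\int f$. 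Because $-p/q=1-p>0$, chaining these with Case 1 applied to $h$ gives
\[\Big(\int_{\R^n} f\Big)\Big(\int_{\R^n}\lap(f)(x/p)\,dx\Big)^{-p/q}\le \Big(\int_{\R^n} h\Big)\Big(\int_{\R^n}\lap(h)(x/p)\,dx\Big)^{-p/q}\le M_p(\gamma_1),\]
which is the claim; equality for centered Gaussians is immediate from $\lap(\gamma_\sigma)=c_p^{n/2}\gamma_{1/(-\sigma pq)}$.

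The main obstacle I anticipate is making the modulation trick of Case 2 rigorous in degenerate situations: $v$ is only defined when $\lap(f)$ admits a finite first moment, and $h$ may become non-integrable even when $f$ is. In such cases one of the two sides of the stated inequality is typically infinite and the claim either holds trivially or is vacuous, but for a clean uniform argument one likely needs a regularization-then-limit step, e.g., replacing $f^{1/p}$ by its convolution with a narrow Gaussian to guarantee rapid decay of $\lap(f)$, tracking the (small) shift of $\mathrm{bar}(f)$ this introduces, and passing to the limit using Theorem~\ref{t:main_sant} together with monotone/dominated convergence.
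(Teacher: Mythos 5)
Your overall decomposition is exactly the one the paper uses: first handle $\mathrm{bar}(\lap(f))=0$ by showing the Laplace--Santal\'o point is at the origin, then reduce $\mathrm{bar}(f)=0$ to that case by the modulation $h(y)=f(y)e^{v\cdot y}$ with $v=p\,\mathrm{bar}(\lap(f))$, using that $\lap(h)=\tau_{-v/p}\lap(f)$ has the same integral as $\lap(f)$. Two minor cosmetic differences: in Case~1 you apply Jensen's inequality to $\psi(z)/\psi(0)$ where the paper simply invokes Proposition~\ref{prop:exist}\,$(v)$; in Case~2 you bound $\int h/\int f$ by Jensen, while the paper uses the even more elementary linearization $\int f(x)e^{v\cdot x}\,dx\ge\int f\,dx+\int(v\cdot x)f(x)\,dx=\int f\,dx$. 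Both variants are valid.

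The genuine gap is the one you flag but do not close: the well-definedness of $v=p\,\mathrm{bar}(\lap(f))$. You propose a regularization-then-limit step, but this is unnecessary and would be delicate to carry out (the convolution changes both $\lap(f)$ and $\mathrm{bar}(f)$, and the limiting arguments would need Proposition~\ref{prop:cont_santalo} plus nondegeneracy checks). The paper's resolution is direct and should replace your regularization proposal: after discarding the trivial case $\lap(f)\equiv 0$, the hypothesis $\mathrm{bar}(f)=0$ forces $0\in\intco(\supp f)$, so Proposition~\ref{prop:finiteLp} gives $L\lap(f)(0)<\infty$, i.e.\ $\lap(f)$ is integrable. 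Being integrable, log-concave and $\not\equiv0$, $\lap(f)$ satisfies the exponential decay bound~\eqref{eq:majo_log_concave} of Fact~\ref{f:integrable}, hence has finite moments of every order; in particular $\mathrm{bar}(\lap(f))$, and thus $v$, is well defined. With this in place, your chain of inequalities goes through verbatim (and your worry that $h$ might fail to be integrable is moot: Theorem~\ref{t:main_sant} and the Case~1 argument are stated for arbitrary nonnegative $h$, and $\int\lap(h)=\int\lap(f)\in(0,\infty)$ in any case).
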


Note that, by letting $p\to 0^+$, and combining~\eqref{eq:limit_p} with Fatou's Lemma, we recover \eqref{eq:barycenter_origin}. We can even replace the polar by the essential polar defined by~\eqref{eq:ess_polar}.

Observe also that Theorem~\ref{t:laplace} follows directly from Theorems~\ref{t:main_sant} and~\ref{t:main_sant_2} applied to the nonnegative function $f^p$. Indeed, we have
$$\|L(\tau_z f)\|_q^q =  L\lap(f^p) (qz),$$
and so $\sup_z \|L(\tau_z f)\|_q= (\inf L\lap(f^p))^{1/q}$  since $q<0$. Note for consistency that,  from Proposition~\ref{prop:sant},   $\sup_z \|L(\tau_z f)\|_q \in (0, \infty] $ as soon as $f\not \equiv 0$.

As in Nakamura-Tsuji's work, our main theorem will be proved by letting $f$ evolve along the Fokker-Planck semi-group $(P_tf)$ given by
$$\partial_t P_t f = \frac{1}{2}(\Delta P_tf +{\rm div}(x P_t f)),$$
which can be used to interpolate between $P_0f=f$ and the Gaussian function $$P_\infty f(x)=(2\pi)^{-n/2}\left(\int_{\R^n} f\right) e^{-|x|^2/2}.$$ 
We will show that, for fixed $p$ and $f$,
$M_p(P_tf)$ then increases in time. Note that we really care about the evolution of the infimum, since
$$M_p(P_t f)=  p^{-np/q}  \Big(\int_{\R^n}f \Big)\, \Big(\inf L \lap(P_t f)\Big)^{-p/q}.$$
We will rather work with the heat flow, $E_0 f=f $ and $\partial E_t f = \frac{1}{2}\Delta E_tf$, which is totally equivalent in our situation, but leads to simpler computations. The semi-group evolution will be defined by integral formulas that make sense for any nonnegative function $f$. The next result is the main result of the paper, from which Theorem~\ref{t:main_sant} immediately follows.

\begin{thm}
\label{t:main_2}
    Let $f$ be a nonnegative function and let $f_t$ be its evolution along the Fokker-Planck or heat semi-group. 
    Then, the function $t\mapsto M_p(f_t)$ is increasing in $t\in [0, \infty)$ and is dominated by $M_p(\gamma_1)$.
\end{thm}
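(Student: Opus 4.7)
The plan is to differentiate $M_p(f_t)$ along the flow and show the derivative is non-negative, handling the infimum over translations via the envelope theorem together with the Santal\'o-point characterization from Proposition~\ref{prop:sant}. Since the Fokker--Planck and heat flows are conjugate up to a time-dependent rescaling, and since $M_p$ is linear-invariant by~\eqref{eq:inv}, it suffices to work with the heat flow $\partial_t f_t = \tfrac{1}{2}\Delta f_t$. If $\inf_z \int \lap(\tau_z f_t)(x/p)\, dx = 0$ at some $t$, then $M_p(f_t) = 0$ and there is nothing to prove at that instant; otherwise Proposition~\ref{prop:sant} provides a unique minimizer $z_t := \sant(f_t)$, characterized by ${\rm bar}(\lap(\tau_{z_t}f_t)) = 0$. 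Since $\int f_t$ is conserved by the heat flow and $-p/q = 1-p > 0$, it is equivalent to show that $I(t) := \inf_z \int \lap(\tau_zf_t)(x/p)\, dx$ is non-decreasing. By the envelope theorem, $\frac{d}{dt}I(t)$ equals the partial $t$-derivative of $\int \lap(\tau_z f_t)(x/p)\, dx$ evaluated at $z = z_t$, and since translations commute with the heat flow, the task reduces to the localized statement: if $g_t$ evolves by heat flow on $\R^n$ and $\lap(g_{t_0})$ has barycenter $0$ at a fixed time $t_0$, then $\tfrac{d}{dt}\big|_{t=t_0}\int \lap(g_t)(x/p)\, dx \ge 0$.

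For the derivative computation, set $v_t := g_t^{1/p}$ and $F_t := L v_t$, so that (after absorbing a $p^n$ factor) the integral becomes $\int F_t^q\, du$. The heat equation $\partial_t g_t = \tfrac{1}{2}\Delta g_t$ translates into the nonlinear evolution $\partial_t v_t = \tfrac{1}{2}\Delta v_t - \tfrac{1-p}{2}|\nabla v_t|^2/v_t$, and integration by parts in $y$ then yields
\[ \partial_t F_t(u) = \frac{|u|^2}{2}\,F_t(u) - \frac{1-p}{2}\int \frac{|\nabla v_t(y)|^2}{v_t(y)}\, e^{u\cdot y}\, dy. \]
Substituting this into $\tfrac{d}{dt}\int F_t^q\, du = q\int F_t^{q-1}\partial_t F_t\, du$ and invoking the identity $q-1 = q/p$ (so that $F^{q-1} = \lap(g)^{1/p}$), Fubini produces a variance-type contribution $\tfrac{q}{2}\int |u|^2 \lap(g_t)\, du$ together with a cross term $-\tfrac{q(1-p)}{2}\int \frac{|\nabla v_t(y)|^2}{v_t(y)}\, L\bigl(\lap(g_t)^{1/p}\bigr)(y)\, dy$ whose sign is positive.

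The principal obstacle is then the integral inequality asserting that the sum of these two contributions is non-negative, an estimate that must play the role parity did in Nakamura--Tsuji. The expected route is a Cauchy--Schwarz/Brascamp--Lieb estimate applied to the tilted log-concave probability $\rho_u(dy) \propto v_t(y)e^{u\cdot y}\, dy$ on the $y$-space, coupled to the second-moment structure of the log-concave probability $\lap(g_{t_0})/\int \lap(g_{t_0})$ on the $u$-space; here the centering ${\rm bar}(\lap(g_{t_0})) = 0$ is precisely what annihilates the linear (first-moment) contributions that evenness removed in Nakamura--Tsuji's original argument, so that the remaining pure second-order terms align with the correct sign. Finally, the domination $M_p(f_t) \le M_p(\gamma_1)$ follows from this monotonicity together with the $t\to\infty$ asymptotics of the Fokker--Planck flow: since $P_\infty f = (2\pi)^{-n/2}\bigl(\int f\bigr)\gamma_1$ and $M_p$ is invariant under positive scaling by~\eqref{eq:inv}, one has $M_p(P_\infty f) = M_p(\gamma_1)$, and a standard dominated-convergence argument for the continuity of $t\mapsto M_p(P_tf)$ at $t=\infty$ closes the proof.
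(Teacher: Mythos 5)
Your core differential argument is indeed the one the paper uses: the time derivative of $\int \lap(\tau_z f_t)$ is computed via the evolution of $f_t^{1/p}$ under the heat flow (the $|u|^2$ term plus a Fisher-information-type term after regrouping); the variance Brascamp--Lieb inequality is applied to the log-concave probability $\propto \lap(\tau_z f_t)$ on the $u$-space; Cram\'er--Rao handles the tilted measure on the $y$-space; and the centering $\mathrm{bar}(\lap(\tau_{z_t} f_t))=0$ is exactly what annihilates the $|\int u\, d\mu|^2$ term, replacing the parity argument of Nakamura--Tsuji. (The paper packages this as the Hamilton--Jacobi inequality $\partial_t Q + \tfrac{p}{-2q}|\nabla_z Q|^2 \ge 0$, then evaluates at the minimizer; you go straight to the evaluated form, which is equivalent.) So the heart of the proof is correct and matches the paper.

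There is, however, a genuine gap: your computation is only legitimate for sufficiently regular $f$, and you do not address how to pass to a general nonnegative $f$. Differentiating under the integral, the integration by parts in $y$, the smoothness and strict convexity of $z\mapsto Q(t,z)$ needed for the envelope-theorem step, and even the existence and $C^1$-dependence of $z_t = \sant(f_t)$ on $t$ all rely on $\lap(f_t)$ and $f_t^{1/p}$ satisfying two-sided Gaussian bounds with constants locally uniform in $t$. The paper ensures this by first restricting to $f$ bounded and compactly supported (Theorem~\ref{t:main}), and then devotes Section~\ref{sec:proofs} to the approximation argument: truncations $f^{(k)} = f\cdot 1_{|f|\le k}\cdot 1_{|x|\le k}$, the continuity of $g\mapsto \inf Lg$ and of the minimizer along such sequences (Proposition~\ref{prop:cont_santalo}), the monotonicity of supports under the heat flow (Lemma~\ref{c:domains}), and the dichotomy for $\int f=\infty$ (Fact~\ref{final:f}). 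These are not cosmetic: for general $f$, $Q(t,\cdot)$ need not even be finite anywhere and the infimum need not be attained, so the envelope-theorem step does not make sense as stated. Your dismissal of the degenerate case ("nothing to prove at that instant" when the infimum is $0$) is also incomplete: you must rule out the infimum being $0$ at time $t$ but positive at some earlier time $s<t$, which is precisely the content of Lemma~\ref{c:domains}. Similarly, the claim that a "standard dominated-convergence argument" closes the $t\to\infty$ limit hides the same continuity issue for $\inf_z L\lap(f_t)$, which the paper resolves via Proposition~\ref{prop:cont_santalo}. Supplying the approximation layer is therefore necessary to complete the proof.
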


By letting $p\to 0^+$ we obtain the following result regarding the volume product. Here we use the notation~\eqref{eq:ess_polar} for the 'essential' polar, denoted by $f^\square$, which dominates the usual one. 
\begin{cor}
\label{cor:volume_heat}
    Let $f$ be a nonnegative function on $\R^n$. Let $f_t$ denote its evolution along the heat or Fokker-Planck flows. Then, for $t\geq 0$, the function $$t\mapsto M(f_t)=\left(\intn f \right)\, \inf_z\left(\intn (\tau_z f_t)^\square\right)$$ is increasing on $[0,\infty)$ and dominated by $M(\gamma_1)$.
\end{cor}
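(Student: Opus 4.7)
The plan is to deduce this from Theorem~\ref{t:main_2} by letting $p\to 0^+$, using three ingredients: the direct limit $M_p(\gamma_1)=[p^{-p}(1-p)^{1-p}]^{n/2}(2\pi)^{n(1-p)}\to(2\pi)^n=M(\gamma_1)$; the pointwise convergence $\lap(g)(x/p)\to g^\square(x)$ almost everywhere, from~\eqref{eq:limit_p}; and Fatou's lemma.

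First I would reduce to the case where $f$---and hence $f_t$ for all $t\ge 0$---has its barycenter at the origin. This is legitimate because $M$ is translation-invariant: the identity $(\tau_a f)^\square(x)=f^\square(x)\,e^{-a\cdot x}$ (analogous to~\eqref{eq:translate}) shows that translating the argument of $f^\square$ merely shifts the infimum variable $z$. The heat flow commutes with translations, and once ${\rm bar}(f)=0$ we have ${\rm bar}(f_t)=0$ for every $t$ (the same conclusion holds for Fokker--Planck since ${\rm bar}(P_tf)=e^{-t/2}{\rm bar}(f)$). Under this assumption, Theorem~\ref{t:main_sant_2} together with Proposition~\ref{prop:sant} gives that the infimum defining $M_p(f_t)$ is attained at $z=0$, hence
$$M_p(f_t)=\Big(\int_{\R^n}f\Big)\Big(\int_{\R^n}\lap(f_t)(x/p)\,dx\Big)^{1-p}\le M_p(\gamma_1).$$
Rearranging and letting $p\to 0^+$ yields $\limsup_{p\to 0^+}\int_{\R^n}\lap(f_t)(x/p)\,dx\le(2\pi)^n/\int_{\R^n}f$; combining this with Fatou gives $\int_{\R^n} f_t^\square\,dx\le(2\pi)^n/\int_{\R^n}f$. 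Since $z=0$ is admissible in the infimum defining $M(f_t)$, we conclude $M(f_t)\le(\int_{\R^n} f)\cdot\int_{\R^n} f_t^\square\,dx\le(2\pi)^n=M(\gamma_1)$.

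For the monotonicity $M(f_s)\le M(f_t)$ when $0\le s\le t$, Theorem~\ref{t:main_2} gives the $p$-level inequality $\int_{\R^n}\lap(f_s)(x/p)\,dx\le\int_{\R^n}\lap(f_t)(x/p)\,dx$ for each $p\in(0,1)$, and I would pass this inequality to the limit $p\to 0^+$. The main obstacle is that Fatou alone yields only $\int_{\R^n} f_u^\square\,dx\le\liminf_{p\to 0^+}\int_{\R^n}\lap(f_u)(x/p)\,dx$, whereas transferring the monotonicity requires this to be an equality (equivalently, $L^1$-convergence rather than only pointwise a.e.\ convergence). I expect to obtain this equality through a uniform-integrability or dominated-convergence argument based on the uniform-in-$p$ upper bound $\int_{\R^n}\lap(f_u)(x/p)\,dx\le(M_p(\gamma_1)/\int_{\R^n}f)^{1/(1-p)}$ already established and on the smoothing property of the heat flow that makes $f_t$ regular for $t>0$. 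Once this is in hand, $\int_{\R^n}f_s^\square\,dx\le\int_{\R^n}f_t^\square\,dx$ and monotonicity of $t\mapsto M(f_t)$ follow.
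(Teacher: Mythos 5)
Your strategy for the upper bound $M(f_t)\le M(\gamma_1)$ is essentially workable: translating so that ${\rm bar}(f)=0$, invoking Theorem~\ref{t:main_sant_2}, and using Fatou together with $M_p(\gamma_1)\to(2\pi)^n$ does give $(\int f)\,\int f_t^\square\le (2\pi)^n$, and then $M(f_t)\le(\int f)\int f_t^\square$ because $z=0$ is admissible in the infimum. (One should still dispose separately of the cases $\int f\in\{0,\infty\}$, where the barycenter reduction is unavailable; the paper handles these via Fact~\ref{f:final_2}.) Note, however, that your intermediate claim that the infimum defining $M_p(f_t)$ is \emph{attained at $z=0$} under ${\rm bar}(f_t)=0$ is not justified: Proposition~\ref{prop:sant} characterizes $s_p(f_t)$ by ${\rm bar}(\lap(\tau_{s_p(f_t)}f_t))=0$, which is a condition on $\lap(f_t)$, not on $f_t$. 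Fortunately you only use the \emph{inequality} from Theorem~\ref{t:main_sant_2}, so this does not sink the bound.

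The monotonicity step, however, contains a genuine gap. You assert that Theorem~\ref{t:main_2} gives
\[
\int_{\R^n}\lap(f_s)(x/p)\,dx\le\int_{\R^n}\lap(f_t)(x/p)\,dx,
\]
but Theorem~\ref{t:main_2} is a statement about $M_p(f_u)$, i.e.\ about $\inf_z\int\lap(\tau_z f_u)(x/p)\,dx$. To convert this into the displayed inequality you would need both infima to sit at $z=0$, which (again) would require ${\rm bar}(\lap(f_s))={\rm bar}(\lap(f_t))=0$, not ${\rm bar}(f_s)={\rm bar}(f_t)=0$; these do not follow from one another. This is precisely the obstacle the paper confronts, and it is resolved by a different mechanism: one keeps the infimum inside and uses the stability result Proposition~\ref{prop:cont_santalo} (continuity of $g\mapsto\inf Lg$ along a.e.-convergent sequences of log-concave functions, together with convergence of the minimizing points) to pass $\inf L(\lap(f_s)(\cdot/p))\le\inf L(\lap(f_t)(\cdot/p))$ to its $p=0$ analogue $\inf L(f_s^\square)\le\inf L(f_t^\square)$. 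Along the way one also needs the $p=0$ analogue of Lemma~\ref{c:domains} --- namely $(f_t)^\square(x)\ge e^{-(t-s)|x|^2/2}(f_s)^\square(x)$, which controls when the infima vanish --- and one must verify that $\inf L(\lap(f_u)(\cdot/p))>0$ for small $p$ when $\inf L(f_u^\square)>0$ before Proposition~\ref{prop:cont_santalo} applies. Your proposed remedy (upgrading Fatou to $L^1$-convergence by uniform integrability) addresses a different and in fact minor issue; it cannot fix the infimum mismatch, which is structural. Without Proposition~\ref{prop:cont_santalo} (or an equivalent device tracking the location of the infimum under the limit $p\to0^+$), the monotonicity conclusion does not follow.
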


One can combine the ideas of the present paper with those of~\cite{CEGNT24} to provide a more direct proof of Corollary~\ref{cor:volume_heat} when $f$ is log-concave. 

We mention here that there are also $L^p$ extensions of the Blaschke-Santal\'o inequality for convex bodies. In particular,  Lutwak and Zhang~\cite{LZ97}  have studied $p$-centroid bodies and established sharp inequalities  that imply, at the limit $p\to \infty$, the Blaschke-Santal\'o inequality~\eqref{eq:s1}; see also~\cite{HS09,HLPRY23_2}. Recently, Berndtsson, Mastrantonis and Rubinstein in the symmetric case \cite{BMR24} and Mastrantonis \cite{VM24} in general considered a $L^p$ volume product $\mathcal{M}_p(K)$ among convex bodies for $p\in [0,\infty]$, which in our notation is $\mathcal{M}_{\frac{1-p}{p}}(K) = \left(\frac{p}{1-p}\right)^{n}M_p(1_K)^{1-p}$ and satisfies $\mathcal M_\infty(K)=M(K)$. They showed $\mathcal{M}_p(K) \leq \mathcal{M}_p(B_2^n),$ with equality only for centered ellipsoids. Note that applying our result to an indicator function would lead to a sub-optimal result, since our extremizers are Gaussian functions. We do not see how  to pass from the functional Nakamura-Tsuji type inequalities to the geometric inequalities from~\cite{BMR24,VM24}, or conversely, unlike what happens for the volume product. 
It would be of interest to investigate further connections between our functional inequalities and $\mathcal{M}_p(K)$, or variants of this quantity. 

After the present work was submitted 
and put on arXiv, Mastrantonis \cite{VM24_2}, working independently from us, defined a functional $L^p$ volume product via the relation $\mathcal{M}_{\frac{1-p}{p}}(f) := \left(\frac{p}{1-p}\right)^{n}M_p(f)^{1-p}$, and proved Proposition~\ref{prop:sant} and Theorem~\ref{t:main_2} for the Fokker-Planck heat semi-group when $f$ is log-concave. Working with log-concave functions led him to observations of independent interest and simplifications, for instance the restrictions in our Theorem~\ref{t:main} below are no longer needed. 

We conclude with an application to reverse hypercontractivity. Let $\gamma$ be the standard Gaussian measure on $\R^n$, that is $d\gamma(x)=\frac{\gamma_1(x)}{(2\pi)^{n/2}}dx$. The Ornstein-Uhlenbeck semi-group $(U_t g)_{t\ge 0}$ is given by, for a nonnegative function $g$ by
\begin{equation}
\label{eq:ornstein_uhlenbeck}
U_tg(x)=\int_{\R^n}g(e^{-t}x+\sqrt{1-e^{-2t}}\, z)d\gamma(z).
\end{equation}
The next result extends the even case obtained by Nakamura and Tsuji~\cite{NT24}, who emphasized that Laplace transform inequalities in the range~\eqref{eq:condpq} can be rephrased as reversed hypercontrative estimates for the Ornstein-Uhlenbeck operator. 
\begin{thm}
\label{t:main_3}
    Let $p,q$ be as in~\eqref{eq:condpq}. Define $s>0$ via the relation $p=1-e^{-2s}$, so that $q=1-e^{2s}$. Suppose $f$ is a nonnegative function such that either $$\int x\, f^{p}(x)\, d\gamma=0 \text{ or }\int x\,  U_s(f)^{q}(x) \, d\gamma =0.$$ Then, for every $p_2\geq q$ and $p_1\leq p$, it follows that
    \begin{equation}\label{eq:hyper}
    \|U_s f\|_{L^{p_2} (\gamma)} \ge   \|f\|_{L^{p_1}(\gamma)}.    
    \end{equation}
\end{thm}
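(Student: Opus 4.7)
The plan is to reduce Theorem~\ref{t:main_3} to Theorem~\ref{t:main_sant_2} (equivalently, Theorem~\ref{t:laplace}) by rewriting $U_s f$ as a Gaussian-weighted Laplace transform. Introducing $g := f\cdot \gamma_p$, a completing-the-square computation in~\eqref{eq:ornstein_uhlenbeck} yields the identity
$$U_s f(y) \;=\; \frac{e^{-e^{-2s}|y|^2/(2p)}}{(2\pi p)^{n/2}}\, Lg\!\left(\frac{e^{-s}y}{p}\right).$$
Raising this to the (negative) power $q$ and multiplying by $d\gamma(y)$, the arithmetic identity $q\, e^{-2s}/p=-1$---which is nothing but the hypothesis $q=1-e^{2s}$---makes the Gaussian factor in $y$ cancel the density of $\gamma$ exactly. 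The change of variable $w=e^{-s}y/p$ then gives
$$\int (U_s f)^q \, d\gamma \;=\; \kappa_{p,n}\, \int (Lg)^q(w)\, dw \;=\; \kappa_{p,n}\, \|Lg\|_q^q,$$
for an explicit positive constant $\kappa_{p,n}$.

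I would then check that the two centering hypotheses of Theorem~\ref{t:main_3} translate exactly into the two alternative conditions of Theorem~\ref{t:main_sant_2} applied to $h := g^p$. Since $g^p(x)=f^p(x)\,e^{-|x|^2/2}$ is a constant multiple of $f^p\, d\gamma/dx$, the condition $\int x\, f^p\,d\gamma=0$ is equivalent to $\mathrm{bar}(h)=0$; and the same change of variables $w=e^{-s}y/p$ shows that $\int y\,(U_s f)^q\,d\gamma=0$ is equivalent to $\int w\, (Lg)^q(w)\,dw=0$, i.e.\ $\mathrm{bar}(\lap(h))=0$ (recall $\lap(h)=(Lg)^q$). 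Theorem~\ref{t:main_sant_2} therefore applies in either case and yields $\|Lg\|_q\ge C_p\|g\|_p$; combining with the displayed identity, and calibrating the overall constant by the test case $f\equiv 1$ (where both sides of~\eqref{eq:hyper} equal $1$), gives the endpoint inequality
$$\|U_s f\|_{L^q(\gamma)}\;\ge\;\|f\|_{L^p(\gamma)}.$$

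Finally, for arbitrary $p_1\le p$ and $p_2\ge q$, the standard monotonicity of $\|\cdot\|_{L^r(\gamma)}$ in $r$ (valid because $\gamma$ is a probability measure) gives $\|f\|_{L^{p_1}(\gamma)}\le\|f\|_{L^p(\gamma)}$ and $\|U_s f\|_{L^q(\gamma)}\le\|U_s f\|_{L^{p_2}(\gamma)}$, and chaining these with the endpoint inequality proves~\eqref{eq:hyper}. Conceptually the argument is a pure algebraic reduction---Theorem~\ref{t:main_sant_2} does all the real work---so the only actual obstacle is bookkeeping. The key point to emphasize is that the relation $(1-p)(1-q)=1$ forced by $p=1-e^{-2s}$, $q=1-e^{2s}$ is precisely what makes the Gaussian from completing the square cancel the Gaussian weight of $\gamma$, and it is also what identifies the $U_s(f)^q$-centering with the $\mathrm{bar}(\lap(g^p))=0$ condition coming from the $p$-Laplace--Santal\'o framework.
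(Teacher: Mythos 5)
Your proposal is correct and follows the same route as the paper: you rewrite $U_s f$ as a Gaussian-weighted Laplace transform (the identity you derive by completing the square is precisely the paper's equation~\eqref{eq:Usequiv1}, in the form $(U_sf)^q(2\pi p)^{nq/2}\gamma_1=\lap(f^p\gamma_1)(\cdot/\sqrt{p|q|})$), observe that the conjugacy relation between $p,q$ and the definition of $s$ cancel the Gaussian weight, reduce the endpoint case to Theorem~\ref{t:main_sant_2} applied to $h=f^p\gamma_1$ after matching the two centering conditions, fix the constant by the Gaussian equality case, and then extend to $p_1\le p$, $p_2\ge q$ by monotonicity of $L^r(\gamma)$-norms in $r$ (Jensen). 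The only difference from the paper is presentational: you spell out the completing-the-square computation and the change of variables that the paper cites from Nakamura--Tsuji and records as~\eqref{eq:Usequiv1}--\eqref{eq:Usequiv2}.
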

Observe that $\frac{q-1}{p-1} = e^{4s}$ which improves over the usual Borell's reverse hypercontractivity~\cite{Bor82}, which holds for $\frac{q-1}{p-1} = e^{2s}$. Having improvement of constants or parameters in inequalities by a factor of two under symmetry is a common feature. Note, however, that in our result $p$ and $q$ are conjugate, which restricts the range of parameters with respect to usual hypercontrativity. We also mention it was observed in~\cite{NT22} that, under the assumption $\int x \, f^p(x) \, d\gamma(x)=0$, the Blaschke-Santal\'o inequality implies~\eqref{eq:hyper} for the parameters $p_1 =p=1-e^{2s}$ and $p_2=-p_1>q$, which satisfy the more restrictive condition $ e^{2s}<\frac{p_2-1}{p_1-1}<e^{4s}$ .

\medskip

The rest of the paper is organized as follows. 
\begin{itemize}
\item In Section~\ref{sec:laplace}, we carefully study the Laplace transform of log-concave functions, and more precisely the map $g\mapsto \inf Lg$
 for a log-concave function $g$,  and apply it to the log-concave function $g=\lap(f)$, in order to determine the existence (or non-existence) of the point obtaining the infimum. Then, we prove Proposition~\ref{prop:sant}. We also investigate the continuity properties of the infimum above that will prove crucial in all limiting and approximating processes later.  This section relies on convexity and the Hahn-Banach theorem will be used several times.
 \item In Section~\ref{sec:semi}, we recall the definitions of semi-groups and establish Theorem~\ref{t:main_2} in the particular case where $f$ is bounded and compactly supported. This section is about semi-groups and functional inequalities and is the core of our analytical arguments.  
 \item In Section~\ref{sec:proofs}, we give the proof of Theorem~\ref{t:main_2}, in whole generality,  and of Corollary~\ref{cor:volume_heat}. This relies on successive approximation arguments. It also contains the short derivation of Theorems~\ref{t:laplace} and~\ref{t:main_sant_2}. 
 \item The final section~\ref{sec:applications} contains the straightforward derivation of the result above for the Ornstein-Uhlenbeck semi-group and some questions on the $p$-Santal\'o 'curves'.
\end{itemize}

%%%%%%%%%%%%%%%%%%%%%%%%%%%%%%%%%%%%%%%%%%%%%%%%%%%%%%%%%%%%%%%%%%%%%

\section{Laplace transform, existence  and continuity 
of the Laplace-Santal\'o point}
\label{sec:laplace}

In this section, we investigate $L\lap(f)$, which involves a double Laplace transform, and prove Propositions~\ref{prop:sant} together with some useful results around it. Since $\lap(f)$ is log-concave (when $f\not\equiv 0)$, most of the arguments rely on properties of the Laplace transform of log-concave functions.

For a convex function $\psi:\R\to \R \cup\{\infty\}$, we denote by $\domain(\psi)=\{\psi < \infty\}$ its domain, which is non-empty when $\psi$ is proper, by definition.

Recall that, for a nonnegative $f:\R^n \to [0, \infty]$, we define its Laplace transform, at every $x\in \R^n$, by
$$Lf(x) = \int_{\R^n} f(y) \, e^{x\cdot y}\, dy . $$
When  $f\not\equiv 0$ a.e., we have  following properties:
\begin{itemize}
    \item $Lf\in (0, \infty]$ everywhere and $Lf$ is strictly convex by properties of the function $x\mapsto e^{x\cdot y}$.
    \item $\log(Lf)$ is, by H\"older's inequality, a convex function with domain
$$\domain(\log Lf)= \domain(Lf) = \{Lf < \infty\},$$
and $\log Lf$ is lower-semi-continuous (by Fatou's Lemma). 
\end{itemize}

Recall finally that, for a non-negative function $f:\R^n\to \R^+$, its (essential) support is the closed set defined by 
$$\supp(f)=\R^n \setminus \Big\{x\in \R^n \; ; \; \exists r>0, \ 
\int_{B(x,r)} f=0 \, \Big\},$$
where $B(x,r)$ is the ball of radius $r$ centered at $x$.

\subsection{More on Laplace transform of log-concave functions}
Recall that a function $g:\R^n \to [0, \infty)$ is said to be  log-concave if $g=e^{-\psi}$, where $\psi:\R^n \to \R\cup\{\infty\}$ is convex; 
then the interior of the support of $g$, which is also the interior of the domain of $\psi$, is a convex set that verifies
$$\textrm{int}(\textrm{supp}(g) )= \textrm{int} \{g>0\},$$
which is non-empty when $g\not\equiv 0$, and $g$ is continuous on this set. 

It was observed by Klartag~\cite[Lemma 2.1]{BK07}, under an assumption of integrability, that the domain of $\log Lg$ is an open set on which $\log Lg$ is strictly convex. In Fact~\ref{fact:mf} below,  we complete, in more generality,  the description of the behavior of $\log Lg$, and characterize when it attains its infimum in its domain. We begin with the following fact concerning integrability of log-concave functions, most of which is classical. We recall that, for a convex set $K$ in $\R^n$, its gauge is given by $\|y\|_K=\inf\{r>0:y\in rK\} \in [0, \infty]$.
\begin{fact}
\label{f:integrable}
    Let $g:\R^n\to [0, \infty)$
    %$g:\R^n\to\R\cup\{\infty\}$ 
    be a log-concave function with $g\not\equiv 0$. The following properties are equivalent:
    \begin{enumerate}
        \item[(i)] $\intn g < \infty$.
        \item[(ii)] There exists some constants $A,B>0$ such  that for all $x\in \R^n$, 
\begin{equation}\label{eq:majo_log_concave}
g(x)\le A\, e^{-B\, |x|}.
\end{equation}
    \item[(iii)] For every $a\in\R^n$ and $u\in\s^{n-1}$, one has $g(a+tu) \to 0$ as $t\to+\infty$.
    \end{enumerate}
\end{fact}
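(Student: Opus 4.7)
The implications (ii)$\Rightarrow$(i) and (ii)$\Rightarrow$(iii) are immediate, since the exponential bound $g(x)\le Ae^{-B|x|}$ is both integrable and vanishes along rays. The content of the statement lies in (i)$\Rightarrow$(ii) and (iii)$\Rightarrow$(ii). Writing $g=e^{-\psi}$ with $\psi:\R^n\to\R\cup\{+\infty\}$ convex and proper, the plan is to reduce both implications to a common intermediate statement: \emph{if $\{\psi\le 1\}$ is bounded, after normalizing $\inf\psi=0$, then $g$ admits an exponential bound as in (ii).}

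For this reduction, translate and normalize so that $\psi(x_0)=\inf\psi=0$ at some $x_0$, and pick $R>0$ with $\{\psi\le 1\}\subset B(x_0,R)$. For each $u\in\s^{n-1}$, the convex function $\phi_u(t):=\psi(x_0+tu)$ satisfies $\phi_u(0)=0$, and $T_u:=\sup\{t\ge 0:\phi_u(t)\le 1\}\le R$. The subgradient inequality applied at $T_u$, combined with $\phi_u(0)=0$, gives $(\phi_u)'_+(T_u)\ge 1/T_u\ge 1/R$, hence
$$\phi_u(t)\ge 1+\frac{t-T_u}{R}\ge \frac{t}{R}-1 \quad\text{for }t\ge T_u,$$
while trivially $\phi_u(t)\ge 0\ge t/R-1$ for $0\le t\le T_u\le R$. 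Uniformly in $u$, this yields $\psi(x)\ge |x-x_0|/R-1$, equivalent to (ii).

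It remains to show that (i) and (iii) each force $\{\psi\le 1\}$ to be bounded. This set has non-empty interior, since $\psi$ is continuous at $x_0\in\inte\domain(\psi)$ with $\psi(x_0)=0$. Under (i), $\{\psi\le 1\}=\{g\ge e^{-1}\}$ has Lebesgue measure at most $e\int g<\infty$; since an unbounded convex set with non-empty interior contains a half-infinite cylinder of infinite volume, boundedness follows. Under (iii), suppose for contradiction that $\{\psi\le 1\}$ is unbounded, and extract $x_k$ with $\psi(x_k)\le 1$, $|x_k|\to\infty$, and $x_k/|x_k|\to u\in\s^{n-1}$. Fix $a\in\inte\domain(\psi)$; for each $t>0$ and $k$ large set $s_k:=t/|x_k-a|\in(0,1)$ and $y_k:=(1-s_k)a+s_k x_k$. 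By convexity, $\psi(y_k)\le (1-s_k)\psi(a)+s_k\to \psi(a)$, while $y_k\to a+tu$. The standard fact that for $a\in\inte\domain(\psi)$ and $b\in\overline{\domain(\psi)}$ the open segment $(a,b)$ lies in $\inte\domain(\psi)$ forces $a+tu\in\inte\domain(\psi)$; $\psi$ is then continuous at $a+tu$ and $\psi(a+tu)=\lim_k\psi(y_k)\le\psi(a)$. Hence $g(a+tu)\ge e^{-\psi(a)}>0$ for every $t>0$, contradicting $g(a+tu)\to 0$.

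The main obstacle is this last step, which converts the purely pointwise hypothesis (iii) into a uniform geometric statement about sub-level sets; the key device is the convex-combination trick producing the limiting ray $a+tu$, whose containment in $\inte\domain(\psi)$ sidesteps any lower-semicontinuity concerns at the boundary of the support.
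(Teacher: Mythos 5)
Your proof is correct in substance, and it takes a genuinely different route from the paper's. The paper simply cites Klartag for $(i)\Leftrightarrow(ii)$ and then proves $(iii)\Rightarrow(i)$ directly via polar integration $\int g = \int_{\s^{n-1}}\int_0^\infty t^{n-1}g(a+tu)\,dt\,du$ and the Ball-body construction $\frac{1}{n}\|x\|_{K_a}^{-n}=\int_0^\infty t^{n-1}g(a+tx)\,dt$, showing the body $K_a$ is bounded. You instead prove everything self-containedly and funnel both nontrivial implications through a single geometric statement — boundedness of the sub-level set $\{\psi\le\inf\psi+1\}$ — from which the exponential decay $(ii)$ follows by elementary one-dimensional convexity along rays. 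Your route is more elementary (no spherical integration, no Ball body), makes the equivalence of all three items conceptually transparent (they all say the same thing about the sub-level set), and gives a direct proof of $(i)\Rightarrow(ii)$ rather than a citation. The paper's route is shorter on the page and re-uses machinery already present elsewhere in the paper.

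Two small expository points you should tighten. First, the normalization $\psi(x_0)=\inf\psi=0$ presupposes that $\inf\psi>-\infty$ is attained; this should be justified (for a closed proper convex $\psi$ whose sub-level set is bounded and non-empty this is immediate, and one may always replace $\psi$ by its lsc envelope without changing $g$ a.e.). Second, the line $(\phi_u)'_+(T_u)\ge 1/T_u$ quietly assumes $T_u>0$ and $\phi_u(T_u)\ge 1$; the cases $T_u=0$ and a jump of $\phi_u$ at the right endpoint of its domain need to be dispatched separately (both are trivial, since then $\phi_u(t)=+\infty$ for $t>T_u$). Neither issue is a real gap, but spelling them out would make the argument airtight.
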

Note that the degenerate situation, in dimension $\ge 2$, where $g$ is constant, say, along a line and zero elsewhere, is excluded by the condition $g\not\equiv 0$.

\begin{proof}
The equivalence $(i)\Leftrightarrow(ii)$ is easy and standard, see for instance~\cite{BK07}, and
$(ii)\Rightarrow(iii)$ is obvious. So we concentrate on $(iii)\Rightarrow(i)$. 

Note that in dimension one, if $\alpha:\R\to \R^+$ is a log-concave function such that $\alpha(t)$ tends to $0$ as $|t|\to \infty$ (equivalently, the convex function $t\mapsto -\log(\alpha(t))$ tends $\infty$ as $|t|\to\infty$), then there exists $C,c>0$ such that $\alpha(t) \le C \, e^{-c\, |t|}$ for all $t\in \R$. 
%In particular it is integrable. 
Fix $a\in \inte(\supp(g))$. Then,
    \begin{align*}
        \intn g(x) dx = \intn g(a+x) dx = \int_{\s^{n-1}}\left(\int_0^\infty t^{n-1}g(a+tu)dt\right)du.
    \end{align*}
The remark above with $\alpha(t)=g(a+tu)$  gives that $t\mapsto t^{n-1}g(a+tu)$ is  integrable on $\R^+$.
Since $g$ is $\log$-concave, there exists a convex set $K_a$ (see \cite[Theorem 3]{Ball88}, \cite[Corollary 4.2]{GZ98} or \cite[Theorem 3.1]{CEFPP15}) such that, for any $x\in\R^n$,
    $$\frac{1}{n}\|x\|_{K_a}^{-n} = \int_0^\infty t^{n-1}g(a+tx)dt.$$
From our choice of $a$, we have that $\|\cdot\|_{K_a}$ is finite, and therefore continuous since it is convex. Moreover, since $t^{n-1}g(a+tu)$ is integrable, $\|u\|_{K_a} >0$  for all $u\in\s^{n-1}$, and so there exists $c>0$ such that $\|u\|_{K_a} \ge c $  for all $u\in\s^{n-1}$. This implies that   $K_a$ is bounded and therefore 
     $$\int_{\R^n}g(x)dx = \frac{1}{n}\int_{\s^{n-1}}\|u\|_{K_a}^{-n} du = \vol_n(K_a) < \infty.$$
\end{proof}
We will assume first that the domain of $Lg$ is non-empty. We give a sufficient condition of this property later, which will always be verified in our applications. It is also worth noting that for a log-concave function $g$, the barycenter
$${\rm bar}(g):=\frac{\intn g(x) x\, dx}{\intn g(x)dx}$$
is always well defined as soon as $0<\intn g < \infty$. 

The next fact is  central for our arguments. 

\begin{fact}
\label{fact:mf}
    Let $g$ be a log-concave function with $g\not \equiv 0$. Assume $\domain(Lg)\neq\emptyset$. 
    Then:

    \begin{enumerate}
    \item The set $\domain( Lg)$ is an open and convex set, on which  $Lg$ is smooth and strictly convex, and we have
    \[\domain( Lg)\subseteq \{g^\circ>0\}\subseteq \overline{\domain( Lg)}. \] 

    \item 
If the origin is in the interior of the support of $g$, then $ Lg$ tends to $\infty$ at the boundary of $\domain( Lg)$. Consequently, $ Lg$ attains its minimum, at a unique point in $\domain(Lg)$. This point $s_0$ is characterized by the property
    $${\rm bar} (g(x) e^{x\cdot s_0})=0.$$
    
    \item On the other-hand, if the origin is not in the interior of the support of $g$, then there exists a vector $z_0\in\R^n$ and a direction $u\in\s^{n-1}$ such that $\lim_{t\to\infty}Lg (z_0+tu)=0$.

    \end{enumerate}
\end{fact}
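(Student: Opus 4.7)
The plan is to treat the three claims in turn, the main tools being Fact~\ref{f:integrable} (which upgrades integrability of a log-concave function to exponential decay) and Hahn-Banach separation; dominated convergence and Fatou's lemma handle the various limits.

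For Part 1, convexity of $\domain(Lg)$ comes from log-convexity of $Lg$ by H\"older. For openness, I would pick $x_0\in \domain(Lg)$; the log-concave function $y\mapsto g(y)e^{x_0\cdot y}$ is integrable, hence dominated by $A\,e^{-B|y|}$ via Fact~\ref{f:integrable}, which leaves a ball of radius $B$ around $x_0$ inside $\domain(Lg)$ and also provides the domination needed to differentiate under the integral (giving smoothness). Strict convexity follows from strict convexity of $x\mapsto e^{x\cdot y}$ along direction $y$: equality on a segment $[x_1, x_2]$ would place $\supp g$ inside the hyperplane $(x_1 - x_2)^\perp$, which is ruled out since $\supp g$ has non-empty interior (as $g\not\equiv 0$). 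The inclusion $\domain(Lg) \subseteq \{g^\circ > 0\}$ is then immediate since the exponential bound gives boundedness of $g(y)\, e^{x\cdot y}$, which is exactly $\psi^\ast(x) < \infty$. For the converse inclusion, given $g(y)\, e^{x\cdot y} \le M$ and any fixed $z \in \domain(Lg)$, I would interpolate $x_\varepsilon = (1-\varepsilon) x + \varepsilon z$ and bound
\[
g(y)\, e^{x_\varepsilon\cdot y} = \bigl(g(y)\, e^{x\cdot y}\bigr)^{1-\varepsilon}\bigl(g(y)\, e^{z\cdot y}\bigr)^{\varepsilon} \le M^{1-\varepsilon}\bigl(g(y)\, e^{z\cdot y}\bigr)^{\varepsilon};
\]
the last factor is integrable because $g(y)\, e^{z\cdot y} \le A'\, e^{-B'|y|}$ again by Fact~\ref{f:integrable}, so $Lg(x_\varepsilon) < \infty$ and $x_\varepsilon \to x$ as $\varepsilon\to 0$.

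For Part 2, blow-up of $Lg$ at $\partial\domain(Lg)$ is essentially automatic from Part 1 together with Fatou's lemma: $Lg$ is lower semi-continuous, so any sequence $x_n \to x_0 \in \partial\domain(Lg)$ with $\liminf Lg(x_n) < \infty$ would place $x_0 \in \domain(Lg)$, contradicting openness. Coercivity along the interior is obtained from the assumption $0 \in \text{int}(\supp g)$: choosing $r, c > 0$ with $g \ge c\, 1_{B(0,r)}$, a spherical-cap estimate on $\int_{B(0, r)} e^{x\cdot y}\, dy$ gives the lower bound $Lg(x) \ge C\, e^{r|x|/2}$, hence $Lg(x) \to \infty$ as $|x| \to \infty$. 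These two features force the minimum $s_0$ to be attained in the open set $\domain(Lg)$, uniqueness then comes from strict convexity, and the first-order condition $\nabla Lg(s_0) = \int y\, g(y)\, e^{y\cdot s_0}\, dy = 0$ reads precisely ${\rm bar}(g(x)\, e^{x\cdot s_0}) = 0$.

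For Part 3, Hahn-Banach separation of $0$ from the non-empty open convex set $\text{int}(\supp g)$ produces $u\in\s^{n-1}$ with $u\cdot y \le 0$ for all $y \in \supp g$. Fixing any $z_0\in \domain(Lg)$ (non-empty by hypothesis), the integrand $g(y)\, e^{z_0\cdot y}\, e^{tu\cdot y}$ is dominated by the integrable $g(y)\, e^{z_0\cdot y}$ for every $t\ge 0$, and tends pointwise to $0$ off the hyperplane $\{u\cdot y = 0\}$, which has Lebesgue measure zero; dominated convergence then yields $Lg(z_0 + tu) \to 0$. The main obstacle I expect lies in the reverse inclusion of Part 1: boundedness of $g(y)\, e^{x\cdot y}$ is strictly weaker than integrability, and only the $\varepsilon$-power interpolation combined with the exponential decay from Fact~\ref{f:integrable} (rather than mere integrability) allows one to approximate points of $\{g^\circ > 0\}$ by points of $\domain(Lg)$.
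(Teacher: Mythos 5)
Your proof is correct and follows essentially the same route as the paper's: exponential decay from Fact~\ref{f:integrable} drives the openness and the two domain inclusions (with the same power-interpolation trick), and the coercivity, Fatou boundary blow-up, and Hahn-Banach separation arguments in Parts 2 and 3 are the ones the paper uses. The only cosmetic difference is that for the openness, smoothness, and strict convexity in Part 1 the paper cites Klartag's lemma~\cite{BK07} (phrased via strict positivity of the Hessian of $\log Lg$ as a covariance matrix) whereas you give a self-contained argument via strict convexity of $x\mapsto e^{x\cdot y}$; both are valid.
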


\begin{proof}
When $g\not\equiv 0$ is integrable, it was easily noticed by Klartag \cite{BK07}, using \eqref{eq:majo_log_concave},  that the domain of the convex function $\log Lg$, which equals the domain of $Lg$, is \emph{open} and that the function $\log Lg$ is strictly convex and  smooth on its domain ; for this, one uses that the Hessian of $\log Lg$ is a covariance matrix that is strictly positive when the support of $g$ has non-empty interior, which, for a log-concave function $g$, amounts to $\int g>0$. It then follows that $Lg$ is strictly convex and smooth on its domain.

Denote by $\sigma_z(x)=e^{z\cdot x}$. Let $z_0\in \domain(Lg)$; then $g \sigma_{z_0}\in L^1(\R^n)$ and $L(g\sigma_{z_0})=\tau_{-z_0}Lg$.
Applying the previous observation to $g \sigma_{z_0}$, it follows that $Lg=\tau_{z_0} L(g\sigma_{z_0})$ is strictly convex and smooth on its domain which is open. 

Next, note that
$\{g^\circ >0\}= \{z\; ;\; g \sigma_z \in L^\infty(\R^n)\}$
while $\domain(Lg)=\{z \; ;\; \sigma_z g\in L^1(\R^n)\}$. As mentioned above, for a log-concave function, which is $\not \equiv 0$, being integrable implies being bounded, so we have $\domain(Lg)\subseteq \{g^\circ>0\}$.  As before, since $\sigma_{z_0} g$ is integrable,  there exists $a,b>0$ such that $g(x) e^{z_0\cdot x}
\le b e^{-a|x|}$, for all $x\in \R^n$. Take now $z\in \{g^\circ>0\}$. For $t\in (0,1)$, if we set $z(t) = (1-t) z_0 + t z$, then we have
$$g(x)\,  e^{z(t)\cdot x} \le \frac{b^{1-t}}{(g^\circ (z))^t} e^{-a (1-t)\,  |x|}, \qquad \forall x\in \R^n.$$
Thus $z(t) \in \domain(L g)$. Since $z(t) \to z$ as $t\to 1$, we conclude that $z\in \overline{\domain(Lg)}$.
We have established the first claim. We now study the behavior at the boundary.

Suppose the origin is in the interior of the support of $g$. Then, there exists $\varepsilon>0$ such that $2\varepsilon B_2^n$ is included in the interior of the support of $g$. Since $g$ is continuous on the interior of its support, it follows that there exists $c>0$ such that $\int_{x\cdot u>\varepsilon}g(x)dx\ge c$ for all $u\in \s^{n-1}$. Let $z\in\R^n$, with $z\neq0$. Then 
\[
Lg(z)\ge\int_{x\cdot z>\varepsilon|z|} g(x)\,dx\;  e^{\varepsilon|z|}\ge c\, e^{\varepsilon|z|}.
\]
It follows that $\lim_{|z|\to+\infty}Lg(z)=\infty$.

We now consider the possibility that the domain is not the whole space and we have a vector $a$ in the boundary, necessarily outside the domain since it is open. If $(a_n)$ is any sequence in the domain tending to $a$, then by Fatou's Lemma: 
$$\liminf \intn g(y)\, e^{a_n \cdot y} \, dy \ge \intn g(y)\,  e^{a\cdot y}\, dy = \infty. $$
As a consequence of this boundary behavior, we see that $\log Lg$ attains its minimum,  at a point in its domain that we denote by $s_0$. This point is unique since $\log Lg $ is strictly convex.
Moreover $\log Lg$ is differentiable in the interior of its domain, so the point $s_0$ is characterized by
$$\nabla \log Lg(s_0)=0,$$
which rewrites as
$$0=\intn x \, \frac{g(x)\, e^{x\cdot s_0}\, 
dx}{\intn g(x)e^{x\cdot s_0}\, dx}.$$ 
Recall here that an integrable log-concave function has moments of all orders, by~\eqref{eq:majo_log_concave}.

Finally,  if the support of $g$ does not contain the origin, then, from the Hahn-Banach theorem, there exists a direction $u\in\s^{n-1}$ and a hyperplane  $H=\{x\in\R^n\; ; \;  x\cdot u =0\}$ with outer-unit normal $u$ such that $\supp(g) \subseteq H^{-}=\{x\in\R^n\; ; \; x\cdot u \leq 0\}$. Using again $z_0\in \domain(Lg)$, we then have, for every $t>0$,
\[
    Lg(z_0+tu) = \int_{\R^n}e^{x\cdot (z_0+tu)}g(x)dx = \int_{\{x\cdot u < 0\}}e^{tx\cdot u}\, e^{x\cdot z_0}\,  g(x)dx.
\]
Since on $H^-$ we have $ e^{tx\cdot u}\, e^{x\cdot z_0}\,  g(x)\le e^{x\cdot z_0}\,  g(x)$ and that this upper bound is integrable, we can use dominated convergence and obtain
$$\lim_{t\to \infty}Lg(z_0+tu) = 0.$$
\end{proof}

The special role the origin plays in Fact~\ref{fact:mf} is analogous to the role the origin plays for the classical duality of a log-concave function. 

We now move to show a sufficient condition for the domain of the Laplace transform of a $\log$-concave function to be non-empty. We can do later without this, but we feel it is of independent interest. We say a function $F$ is affine along a line if  there exists $u,v\in\R^n$ and $\alpha,\beta\in\R$ such that, for every $t\in \R,$
        $$F(tu+v) = \alpha t + \beta.$$

\begin{fact}
\label{fact:finite}
Let $F:\R^n\to \R\cup\{\infty\}$ be a convex lower semi-continuous function. If $\domain(L(e^{-F}))=\emptyset$, then $F$ is affine along a line.
\end{fact}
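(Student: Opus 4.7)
The strategy I would follow is to exploit the Fenchel-Legendre duality and the biconjugate identity. Writing $g = e^{-F}$, the key observation is that the interior of $\domain(F^*)$ is contained in $\domain(Lg)$; therefore the hypothesis $\domain(Lg) = \emptyset$ forces $\domain(F^*)$ to have empty interior. Being a non-empty convex set with empty interior, $\domain(F^*)$ lies in a proper affine subspace, and the identity $F = F^{**}$ then translates this directly into $F$ being affine along every line in the corresponding normal direction.

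To establish the key inclusion, I would first dispatch the trivial case $F\equiv +\infty$: there $Lg \equiv 0$ and $\domain(Lg) = \R^n$, contradicting the hypothesis. Hence $F$ is proper, so by the Fenchel-Moreau theorem $F^*$ is also proper convex lsc and $F^{**} = F$. Now pick $z \in \interior(\domain(F^*))$; since $F^*$ is convex and finite on an open ball around $z$, it is locally bounded there, so one can choose $\varepsilon > 0$ and $M < \infty$ such that $F^*(y) \le M$ for every $y \in z + \varepsilon B_2^n$. The Fenchel inequality $F(x) \ge y \cdot x - F^*(y)$, applied after maximizing over $y$ in this ball, yields
$$F(x) \ge z \cdot x + \varepsilon |x| - M,$$
so $e^{-F(x) + z\cdot x} \le e^M e^{-\varepsilon|x|}$ is integrable and $z \in \domain(Lg)$.

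With this inclusion in hand, $\domain(Lg) = \emptyset$ gives $\interior \domain(F^*) = \emptyset$, so there exist $u_0 \in \s^{n-1}$ and $c \in \R$ with $\domain(F^*) \subseteq H := \{y \in \R^n : y \cdot u_0 = c\}$. The biconjugate identity then yields, for every $x \in \R^n$ and $t \in \R$,
$$F(x + t u_0) = \sup_{y}\bigl(y \cdot (x + t u_0) - F^*(y)\bigr) = \sup_{y \in H}\bigl(y \cdot x + t c - F^*(y)\bigr) = F(x) + t c,$$
where the second equality uses $F^*(y) = \infty$ off $H$ and $y \cdot u_0 = c$ on $H$. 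Choosing any $v \in \domain(F)$ yields $F(v + t u_0) = F(v) + c\, t$ for all $t \in \R$, i.e.\ $F$ is affine along the line through $v$ with direction $u_0$.

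I do not anticipate a major obstacle: the only delicate step is the local boundedness of $F^*$ on the interior of its domain, a standard convex-analysis fact that follows, e.g., from the classical observation that a convex function bounded by $M$ on the vertices of a small simplex around $z$ is bounded by $M$ on the whole simplex.
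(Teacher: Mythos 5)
Your proof is correct, but it follows a genuinely different route than the paper. The paper works directly with the epigraph of $F$: if $F$ is not affine along a line, the closed convex epigraph contains no line, hence has an exposed point $(x_0, F(x_0))$; the supporting affine functional $z_0\cdot x$ at that point then yields a linear lower bound $F(x)-z_0\cdot x\ge a|x|-b$ and integrability of $e^{-F}e^{z_0\cdot x}$. You instead pass to the conjugate: the inclusion $\interior(\domain F^\ast)\subseteq\domain(L(e^{-F}))$ (proved via local boundedness of $F^\ast$ near $z$ and maximizing the Fenchel inequality over a small ball) reduces the hypothesis to $\interior(\domain F^\ast)=\emptyset$, so $\domain(F^\ast)$ sits in a hyperplane, and biconjugacy immediately gives $F(x+tu_0)=F(x)+tc$. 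Both arguments ultimately hinge on producing a suitable linear functional; the paper finds it as the slope exposing a vertex of the epigraph, while you read it off from the flat conjugate domain. Your approach has the pleasant feature that it yields the conclusion globally in one stroke --- $F$ is affine along \emph{every} line in the direction $u_0$ --- and that it makes the link to the inclusion $\domain(Lg)\subseteq\{g^\circ>0\}$ appearing in Fact~\ref{fact:mf} transparent. The only detail to keep straight is that $\domain(F^\ast)$ is non-empty, which indeed follows from properness of $F$ via Fenchel--Moreau, exactly as you noted after dispatching $F\equiv+\infty$.
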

\begin{proof} We can assume that $F$ is proper, for, if not, we would have $e^{-F}=0$ and so $\domain(L(e^{-F}))=\R^n$. 
%For similar reasons, we can assume $e^{-F}\not\equiv 0$. 
Consider the epigraph of $F$: 
$$C=\{(x,t); t \geq F(x)\}\subset \R^n\times \R.$$
Then, $C$ is a non-empty closed, convex set. If $F$ is not affine along a line, then $C$ does not contain a line; indeed if the closed convex set $C$ contains a line $\ell$, then all lines parallel to $\ell$ through points of $C$ are also in $C$, which implies in particular that the boundary of $C$ contains also a line (and $F$ is therefore affine along the projection on $\R^n$ of this line).  Thus, $C$ must contain extreme points, and hence exposed points. Let $(x_0,F(x_0))$ be an exposed point of the boundary of $C$. This means there exists $z_0\in \R^n$ such that  the convex function $x\to G(x):=F(x)-x\cdot z_0 \in \R\cup\{\infty\}$ has a \emph{unique} minimum at $x_0$. In turn this implies 
 there exists $a,b>0$ such that 
 \begin{equation}\label{eq:lowerG}
      G(x)\geq a|x| -b, \quad \forall x\in \R^n.
 \end{equation}
Indeed, since $m:= G(x_0)<G(y)$ for all $y\in \R^n$, we have, using the lower-semi continuity of $G$,  that $M:=\inf_{|\theta| = 1} G(x_0+\theta) >m$. A standard argument then gives~\eqref{eq:lowerG} with $a=M-m>0$: for $|x-x_0|>1$, write $x_0+\frac{x-x_0}{|x-x_0|}  = \frac{1}{|x-x_0|} x + \big(1- \frac{1}{|x-x_0|}\big)x_0 $, and, for $|x-x_0|\le1$, simply invoke that $G$ is lower-bounded.   

Property~\eqref{eq:lowerG} implies $\int_{\R^n}e^{-(F(x) - x\cdot z_0)}dx < \infty $, and so $\domain(L(e^{-F}))\not\equiv \emptyset$.
\end{proof}
%%%%%%%%%%

 We need the following elementary fact concerning convergence of sequences of convex functions.

\begin{fact}\label{f:uniform_eventually}
    Let $F_k : \R^n \to \R\cup\{+\infty\}$ be a sequence of convex functions converging pointwise to a convex function $F : \R^n \to \R\cup\{+\infty\}$ on a dense subset $S$ of $\R^n$. Assume $\inte(\domain(F))\neq \emptyset$. Then:
    \begin{enumerate}
        \item $F_k\to F$ uniformly on any compact subset of $\inte( \domain(F))$.
        \item $F_k$ converges to $+\infty$ uniformly on any compact subset of $\inte\{F=+\infty\}=\inte(\domain(F)^c)$.
    \end{enumerate}
\end{fact}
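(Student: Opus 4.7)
Both parts are classical facts about pointwise convergence of convex functions, and I will sketch a direct proof.

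For Part 1, I will follow the standard three-step strategy. First, I would establish a local uniform upper bound for $(F_k)$ on $\inte(\domain(F))$: given a compact $K \subset \inte(\domain(F))$, using density of $S$ I would choose an $n$-simplex $\Delta \subset \inte(\domain(F))$ with vertices $v_0,\ldots, v_n \in S$ such that $K$ lies in the interior of $\Delta$; convexity then yields $F_k(x) \leq \max_i F_k(v_i)$ for $x \in \Delta$, and the right-hand side is bounded uniformly in $k$ since $F_k(v_i) \to F(v_i) < \infty$ for each $i$. Second, I would invoke the classical fact that a sequence of convex functions uniformly bounded above on a neighborhood is automatically locally equi-Lipschitz (the midpoint inequality combined with pointwise control at one interior point yields uniform lower bounds, and a standard geometric argument then produces Lipschitz constants). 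Third, Arzel\`a--Ascoli combined with pointwise convergence on the dense set $S$ forces the full sequence to converge uniformly to $F$ on any compact subset of $\inte(\domain(F))$.

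For Part 2, I would argue by contradiction. Suppose there exist a compact $K \subset U := \inte\{F = +\infty\}$, a subsequence $(F_{k_j})$, points $x_j \in K$, and a constant $M > 0$ with $F_{k_j}(x_j) \leq M$; by compactness I extract $x_j \to x_\infty \in K \subset U$. I fix $y^* \in \inte(\domain(F))$ and pick $\alpha \in (0,1)$ small enough that $p := \alpha y^* + (1-\alpha) x_\infty$ still lies in $U$ (possible because $U$ is open and $p \to x_\infty$ as $\alpha \to 0^+$). By density, I then choose $s \in S \cap U$ close enough to $p$ so that $w^* := (s - (1-\alpha) x_\infty)/\alpha$ belongs to $\inte(\domain(F))$; this is feasible because $w^* = y^* + (s-p)/\alpha$ lies near $y^*$, which sits in the open set $\inte(\domain(F))$. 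Setting $w_j := (s - (1-\alpha) x_j)/\alpha$, one has $w_j \to w^*$, and Part 1 applied on a compact neighborhood of $w^*$ gives $F_{k_j}(w_j) \to F(w^*) < \infty$. From the algebraic identity $s = \alpha w_j + (1-\alpha) x_j$ together with convexity,
\[
F_{k_j}(s) \leq \alpha F_{k_j}(w_j) + (1-\alpha) F_{k_j}(x_j) \leq \alpha F_{k_j}(w_j) + (1-\alpha) M,
\]
which stays bounded as $j \to \infty$. Yet $s \in S$ with $F(s) = \infty$ gives $F_{k_j}(s) \to \infty$, the desired contradiction.

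The main delicate point is Part 1's passage from pointwise convergence on $S$ to local uniform control on $(F_k)$; once that is achieved, both the conclusion of Part 1 and the Part 2 argument are essentially straightforward. The crux of Part 2 is the single algebraic identity $s = \alpha w_j + (1-\alpha) x_j$, which converts the hypothetical bound on $F_{k_j}(x_j)$ and the Part 1 control on $F_{k_j}(w_j)$ into a uniform bound on $F_{k_j}(s)$, directly contradicting $F_{k_j}(s) \to \infty$.
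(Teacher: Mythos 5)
Your argument is correct in substance and takes essentially the same route as the paper. In Part~1 you reprove by hand (upper bound, equi-Lipschitz, Arzel\`a--Ascoli) the result that the paper simply cites as Rockafellar's Theorem~10.8; the one slip is the claim that an arbitrary compact $K\subset\inte(\domain(F))$ fits in the interior of a single simplex $\Delta\subset\inte(\domain(F))$ with vertices in $S$. This can fail for $n\ge 2$ --- e.g.\ take $\inte(\domain(F))$ a ball and $K$ a slightly smaller concentric ball, and recall that a simplex's inradius-to-circumradius ratio is at most $1/n$ --- but it is harmless: cover $K$ by finitely many small balls, enclose each in a small simplex with vertices in $S$, and patch, which is exactly what the paper does by working around a single $x_0$. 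In Part~2 you argue by contradiction after extracting a subsequence $x_j\to x_\infty$, while the paper directly establishes a uniform lower bound on a fixed ball around each $x_0\in\inte\{F=+\infty\}$; nevertheless the governing geometry and convexity inequality are identical: one chooses $s\in S\cap\inte\{F=+\infty\}$ on a segment whose other endpoint (your $w_j$, the paper's $z$) lies in $\inte(\domain(F))$ and which passes near the bad point, and exploits $F_k(s)\le\alpha F_k(w_j)+(1-\alpha)F_k(x_j)$ together with $F_k(s)\to+\infty$ and the Part~1 control of $F_k(w_j)$. So I would classify this as the same proof, modulo the minor localization fix in Part~1 and a cosmetic direct-versus-contradiction difference in Part~2.
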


\begin{proof}
Let $x_0\in \inte( \domain(F))$ and let $z_1, \ldots, z_{n+1}$ be affinely independent points such that $x_0$ is in the interior of the simplex $\Delta=\conv(z_1, \ldots, z_{n+1})$ and $\Delta$ is contained in the interior of the domain of $F$. Furthermore, we may move if needed the $z_i$'s a little bit so that they belong to $S$. By definition, there exists $N$ such that for every $k\ge N$, $F_k(z_i)< \infty$ for $i=1, \ldots, n+1$. This implies, by convexity, that $F_k$ is finite on $\Delta$ for every $k\ge N$. On the interior of $\Delta$, we have a sequence $(F_k)_{k\ge N}$ of finite convex functions converging pointwise to a finite convex function $F$, and it is classical that the convergence is then uniform over compact subsets, see~\cite[Theorem 10.8]{RTR70}. We thus pick any ball $B$ of positive radius containing $x_0$ and contained in the interior of $\Delta$. 

Now, for the second case, let $x_0\in\inte\{F=+\infty\}$. Fix $z_0\in\inte(\domain(F))$ and
$r>0$ such that $z_0+rB_2^n\subset \inte(\domain(F))$ and consider the set $C=\conv(z_0+\frac{r}{2}B_2^n,x_0)$. Since $x_0$ is in the interior of the complement of the domain of $F$, there exists a point $y\in S\cap C\cap \inte\{F=+\infty\}$, that can therefore be written as \begin{equation}
\label{eq:formula_y}
y=(1-\lambda) z_1 + \lambda x_0
\end{equation} for some  $z_1\in z_0+\frac{r}{2}B_2^n$ and $\lambda\in (0,1)$. Let us prove that $F_k$ converges uniformly to $F$ on the ball (included in $\{F=+\infty\}$) given by $$B:=x_0 + \frac{r}{2}\left(\frac{1}{\lambda}-1\right)B_2^n.$$ 
For any $x\in B$ let us define $z=z(x)=\frac{y-\lambda x}{1-\lambda}$, so that $y=(1-\lambda )z + \lambda x$. Then $z\in z_1+\frac{r}{2}B_2^n\subset z_0+rB_2^n \subset \inte(\domain(F))$. Indeed, one has 
\[
z-z_1=\frac{y-\lambda x}{1-\lambda}-\frac{y-\lambda x_0}{1-\lambda}=\frac{\lambda}{1-\lambda}(x_0-x)\in \frac{r}{2}B_2^n.
\]
Then the convexity of $F$ and $F_k$ give that
$$\
\frac{1}{\lambda}F(y) \le \left(\frac{1}{\lambda}-1\right) F(z) + F(x)\quad 
\hbox{and}\quad 
\frac{1}{\lambda}F_k(y) \le \left(\frac{1}{\lambda}-1\right) F_k(z) + F_k(x).$$
The first inequality ensures that $F(x)=F(y)=\infty$, since $F(z)<\infty$, and so 
$B\subset\{F=+\infty\}$. The second inequality ensures that $F_k(x)\to +\infty$ uniformly on $B$, since $F_k(z)\to F(z) <+\infty$ uniformly in $z_0+rB_2^n$ (from case 1) and $F_k(y)\to F(y)=+\infty$. 
\end{proof}

The previous fact can be formulated in terms of log-concave functions. We complete it with a useful fact concerning limits of integrals of log-concave functions. Such fact is stated in~\cite[Lemma 3.2]{AAKM04}, although the needed assumption that the pointwise limit is $\not\equiv 0$ is not explicitly stated there. Also, the proof in~\cite{AAKM04} somehow assumes that the limit is strictly positive, so we prefer to include a proof here.   

\begin{fact}
\label{fact:limit_integrals}
    Let $(g_k)_k$ be a sequence of log-concave functions converging pointwise on a dense subset of $\R^n$ to a log-concave function $g$ with $g\not\equiv 0$. Then, $g_k$ converges to $g$ uniformly on any compact subset of $\inte(\supp (g))$ and on any compact compact subset of $\R^n\setminus \supp (g)$, hence almost everywhere. 
    
    Moreover, 
    $$\int g_k \to \int g.$$
    In particular, $Lg_k \to Lg$ pointwise on $\R^n$.
\end{fact}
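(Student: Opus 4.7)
The plan is to apply Fact~\ref{f:uniform_eventually} to the convex functions $F_k := -\log g_k$ and $F := -\log g$ (with the convention $-\log 0 = +\infty$). Since $g \not\equiv 0$ is log-concave, $\{g > 0\}$ is convex of positive measure, hence has non-empty interior; consequently $\inte(\domain F) = \inte \supp(g) \neq \emptyset$ and $\inte\{F = +\infty\} = \R^n \setminus \supp(g)$. Pointwise convergence of $g_k$ to $g$ on the given dense set transfers to $F_k \to F$ there, so Fact~\ref{f:uniform_eventually} delivers uniform convergence on the two stated families of compact sets, which by exponentiation gives the claimed uniform convergence of $g_k$. Since the boundary of the convex set $\supp(g)$ has Lebesgue measure zero, $g_k \to g$ almost everywhere.

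For $\int g_k \to \int g$, Fatou's lemma gives $\int g \leq \liminf \int g_k$, which settles the case $\int g = \infty$. Assume $\int g < \infty$; by Fact~\ref{f:integrable}, $g(x) \leq A e^{-B|x|}$, and the aim is to establish a uniform majorant $g_k(x) \leq A' e^{-B'|x|}$ for $k$ large, from which dominated convergence concludes. Fix $z_0 \in \inte\supp(g)$ with $g(z_0) = c > 0$; uniform convergence gives $g_k(z_0) \geq c/2$ for $k \geq k_0$. A preliminary step is to show $g_k$ is locally uniformly bounded: on a compact $K$ intersecting $\partial\supp(g)$, any putative blow-up $g_{k_j}(x_j) \to \infty$ along $x_j \to x^* \in \partial\supp(g)$ is ruled out via the midpoint log-concavity estimate $g_{k_j}((x_j + y)/2) \geq \sqrt{g_{k_j}(x_j) g_{k_j}(y)}$ applied with $y \in \inte\supp(g)$ close to $x^*$: the right-hand side would blow up, whereas the midpoint eventually lies in a compact subset of $\inte\supp(g)$ where uniform convergence keeps $g_{k_j}$ bounded. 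Equipped with this local bound, pick $t_0$ so large that $g < c/4$ on the sphere $\{|x - z_0| = t_0\}$; combining this, the uniform convergence, and the same kind of log-concavity argument at boundary points of $\supp(g)$ lying on this sphere yields $g_k < c/3$ on that sphere for $k$ large. Log-concavity of $g_k$ along rays from $z_0$ then propagates this into an exponential decay $g_k(z_0 + s u) \leq C_0 \rho^{s/t_0}$, $0 < \rho < 1$, valid for $s \geq t_0$ and $u \in \s^{n-1}$, giving the sought integrable majorant.

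The final assertion follows by applying the just-established integral convergence to the log-concave sequence $y \mapsto g_k(y) e^{x \cdot y}$, which converges on the same dense subset to $g(y) e^{x \cdot y} \not\equiv 0$: this yields $L g_k(x) \to L g(x)$ for every $x \in \R^n$. The core technical obstacle is the uniform local control of $g_k$ across $\partial\supp(g)$; once that is secured, the rest of the argument reduces to Fatou, dominated convergence, and routine log-concavity manipulations.
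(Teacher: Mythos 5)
Your proposal follows the same overall strategy as the paper: apply Fact~\ref{f:uniform_eventually} to $-\log g_k$ to obtain the uniform convergence, then use the log-concavity of the $g_k$'s to build a uniform integrable majorant when $\int g<\infty$, and finish by dominated convergence. Your Fatou-based treatment of the case $\int g=\infty$ is a slightly cleaner shortcut than the paper's compact-exhaustion argument, and the final reduction for $Lg_k\to Lg$ is identical.

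The one place that deserves scrutiny is the claim that ``the same kind of log-concavity argument at boundary points of $\supp(g)$ lying on this sphere yields $g_k<c/3$ on that sphere.'' Your local-boundedness argument rules out blow-up, but bounded is not the same as $<c/3$. The midpoint estimate $g_k\bigl((x^*+z_0)/2\bigr)\ge\sqrt{g_k(x^*)\,g_k(z_0)}$ controls $g_k(x^*)$ through the value of $g_k$ at the midpoint, which lies at distance $t_0/2$ from $z_0$ (and, by convexity of $\supp(g)$, at distance $\ge\varepsilon/2$ from the boundary, so uniform convergence applies there). For this to force $g_k(x^*)<c/3$ you need $g$ to be small already at distance $t_0/2$, not merely on the sphere at distance $t_0$; as written, $g$ at distance $t_0/2$ could be as large as $\sup g$, and then the midpoint estimate gives no useful bound. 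The fix is trivial — by Fact~\ref{f:integrable} just choose $t_0$ so that $g<c/4$ everywhere in $\{|x-z_0|\ge t_0/2\}$ — but it should be said. The paper sidesteps this delicacy entirely with a more robust device: it covers an annulus $K_R$ by balls centered at points $z_i\notin\partial\supp(g)$, and for $x$ on the doubled sphere $S_{2R}$ uses the reflected point $2z_i-x$, which lands in the central ball where $g_k$ has a uniform lower bound $m$; the midpoint inequality $g_k(z_i)^2\ge g_k(x)\,g_k(2z_i-x)$ then bounds $g_k(x)$ using only values of $g_k$ at ``nice'' points, so no boundary case ever arises. Both routes work once the choice of radius is adjusted, but the paper's reflection trick is the sturdier of the two.
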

\begin{proof}
The uniform convergence follows from Fact~\ref{f:uniform_eventually} applied to the convex functions $-\log(g_k)$. This immediately implies that, if $\int g =\infty,$ then the claimed convergence of integrals holds, since $\int g = \int_{\inte(\supp(g))} g$ and so for every $C>0$ we can find a compact set $K$ in the interior of the support of $g$ such that $\int_K g \ge C$. 
Thus, we assume that $\int g<\infty$. Since $g_k\to g$ pointwise outside the boundary of $\{g=0\}$, the convergence occurs almost everywhere.

Since $g \not\equiv 0$, there exists a closed ball in $\interior(\supp(g))$.  Without loss of generality, we may assume that this ball is $B_2^n$. Then, by continuity of $g$, there exists $m,M>0$ such that $m< g(x)< M$ for all $x$ in this ball, and, since $g_k$ converges uniformly on $B_2^n$, there exists $k_0$ such that for all $k\ge k_0$ and for all $x\in B_2^n$ one has $m<g_k(x)<M$.

Since $g$ is log-concave and integrable, we have that $\sup_{|x|>R}g(x) \to 0$ as $R \to \infty$, by Fact~\ref{f:integrable}. Thus, we can find $R$ such that $|x|\ge R$ implies $g(x)<\frac{m}{e}$. Define the annulus $K_{R}=\{x;R\leq |x|\leq R+1\}$. Let $z_1,\dots, z_N\notin\partial (\supp(g))$ be such that   $K_R$ is covered by the balls $z_i+\frac{1}{2}B_2^n$, for $1\le i\le N$. Thus, there exists $k_1$ such that for every $1\le i\le N$ and for every $k\ge k_1$ one has $g_k(z_i)\le \frac{m}{e}$. Introducing the sphere $S_{2R}:=\{x;\; |x|=2R\}$, we have that $S_{2R}\subset 2K_R\subset \cup(2 z_i + B_2^n)$. For every $x\in S_{2R}$ there exists $1\le i\le N$ such that $x-2z_i\in B_2^n$. Consequently, 
$g_k(z_i)^2\ge g_k(2z_i-x)g_k(x)$ and for every $x\in S_{2R}$ and for every $k\ge k_1$ we get
\[
g_k(x)\le\frac{g_k(z_i)^2}{g_k(2z_i-x)}\le \frac{m}{e^2}.
\]
Using again the log-concavity of $g_k$, we deduce that for every $x$ such that $|x|\ge 2R$ one has 
\[
g_k(x)\le g_k(0)\left(\frac{g_k(2Rx/|x|)}{g_k(0)}\right)^\frac{|x|}{2R}\le Me^{-\frac{|x|}{R}}.
\]
The same argument also shows that for every $x$ such that $|x|\ge1$ and for every $k\ge k_1$ one has $g_k(x)\le g_k(0)(g_k(x/|x|)/g_k(0))^{|x|}\le M \left(\frac{M}{m}\right)^{|x|}$, hence a uniform bound for $|x|\le 2R$. 
The convergence of integrals then follows from dominated convergence.

    The 'in particular' follows from applying the result to the log-concave function $y\mapsto g_k(y) e^{x\cdot y}$.

\end{proof}

\subsection{Double Laplace transform and proof of Proposition~\ref{prop:sant}}

The goal of this section is to analyse
$$z\mapsto L(\lap(f))(z)=\intn \lap(\tau_{z/q} f)(x)\, dx$$
and, in particular, its infimum and the characterization of the point $\sant(f)$ for a general nonnegative function $f$. Since $\lap(f)$ is always a log-concave function when $f\not\equiv 0$, much of the argument relies on classical properties of the Laplace transform of log-concave functions we have just obtained above. 

It is maybe instructive to analyse some simple examples. Let us recall that $q<0$.  
\begin{itemize}
    \item If $f=1_{[a,b]}$ on $\R$ where $a<b$. Then we have 
$
\lap f(x)= \left(\int_{a}^b e^{x\, y}\, dy\right)^q=\left(\frac{e^{bx}-e^{ax}}{x}\right)^q,
$
and so the support of $\lap(f)$ is equal to $\R$, while
\[
L(\lap f)(z)=\int_{\R} e^{xz}\left(\frac{e^{bx}-e^{ax}}{x}\right)^qdx \in [0, \infty]
\]
has as domain $\domain(L(\lap(f)) = \big((-q)a\, ,\, (-q) b\big)$, and $L\lap(f)$ tends to $\infty$ at the points $-qa$ and $-qb$. In particular $L(\lap(f))$  attains its minimum. 
    \item If $f=1_{[a, \infty)}$, then $\lap(f)(x) = 1_{]-\infty, 0]} (-x)^{-q}e^{q a x}$ and 
$$L(\lap(f))(z) = \frac{\Gamma(1-q)}{(z+qa)^{1-q}} \quad \textrm{ if } z> -qa \quad \textrm{ and } \quad \infty \quad \textrm{otherwise},$$
has domain $(-qa, \infty)$. Note that $0$ is not in the interior of (the convex hull of) the support of $\lap f$ and that we have that $\inf L(\lap f) = 0$, by letting $z\to \infty$. 
    \item If $f(x)=e^{-\alpha |x|^2}$ for some $\alpha>0$, then $\lap(f)(x) = c\, e^{-\beta |x|^2}$ and $L\lap(f)(z)= C\, e^{\eta |z|^2}$ for some constants $c,C, \beta, \eta >0$. In particular $L\lap(f)$ attains its minimum at the origin. 
    \item If $f(x)=\big(\frac{1}{1+x^2}\big)^\alpha$ on $\R$, $\alpha>0$,   then, $\lap(f)(0)=\Big(\int f^{1/p}\Big)^q  $ and $\lap(f)(x)=0$ otherwise. Thus, $\lap(f)\equiv 0$ and so $L\lap(f)(z)=0$ for every $z$.
\end{itemize}   
One can also play with variants of the previous examples using~\eqref{eq:trans} and~\eqref{eq:trans2}.

We next study, when $f\not\equiv 0$, the domain of the Laplace transform of $\lap(f),$
$$\domain(L\lap(f))=  \left\{z \in \R^n\; ; \; \intn \lap(\tau_{z/q} f) <\infty\right\}.$$
We denote by $\intco(A)$ the interior of the convex hull of a set $A\subset \R^n$.

\begin{prop}\label{prop:finiteLp}
Let $f:\R^n\to [0,\infty)$ be a nonnegative function with $f\not\equiv 0$. We have
$$\domain(L\lap(f))\supseteq (-q) \intco(\supp(f)).$$ 
If $f$ is log-concave and integrable, there is equality in the previous inclusion. 
\end{prop}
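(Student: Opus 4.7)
The plan is to reduce both inclusions to a single question about $\int (Lg(x))^q\, dx$, where $g$ is a translate of $f^{1/p}$. Using the identity $\lap(\tau_a f)(x) = \lap(f)(x)\, e^{q x\cdot a}$, one checks that for $z=-qw$,
\[
L\lap(f)(-qw) = \int_{\R^n} \lap(\tau_{-w} f)(x) \, dx = \int_{\R^n} (Lg(x))^q\, dx, \qquad g(u) := f^{1/p}(u+w),
\]
and that $0 \in \intco(\supp(g))$ is equivalent to $w\in\intco(\supp(f))$. So the first inclusion reduces to the following lemma: for any $h\geq 0$ with $0\in\intco(\supp(h))$, there exist $c,\delta>0$ such that $Lh(x)\geq c\, e^{\delta|x|}$ for every $x\in\R^n$. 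To prove the lemma I would pick $y_0,\ldots,y_n\in\supp(h)$ and $\rho>0$ with $B(0,\rho)\subset \conv(y_0,\ldots,y_n)$, choose $\epsilon<\rho$ small enough that the balls $B(y_i,\epsilon)$ are disjoint, and set $m_i := \int_{B(y_i,\epsilon)} h > 0$ by definition of support. Restricting the Laplace integral to the union of these balls gives
\[
Lh(x) \ge \sum_i m_i\, e^{x\cdot y_i - \epsilon|x|} \ge \big(\min_i m_i\big)\, e^{-\epsilon|x|}\, \max_i e^{x\cdot y_i},
\]
and the geometric fact that $\rho x/|x|$ is a convex combination of the $y_i$ forces $\max_i x\cdot y_i\geq \rho|x|$. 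With $\epsilon<\rho$ fixed, $(Lh(x))^q$ is then dominated by an integrable exponential, establishing the first inclusion.

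For the reverse inclusion when $f$ is log-concave and integrable, note that Fact~\ref{f:integrable} gives $f\leq A e^{-B|x|}$, so $f^{1/p}$ is also log-concave and integrable, hence so is every translate $g$; moreover $\intco(\supp(f)) = \inte(\supp(f))$ by convexity. Suppose $w\notin \inte(\supp(f))$, so $0\notin \inte(\supp(g))$, and separate by Hahn--Banach: there is a unit vector $u$ with $u\cdot y \le 0$ for every $y\in\supp(g)$. By Fact~\ref{fact:mf} applied to $g$, $\domain(Lg)$ is open and contains $0$, so pick $\rho>0$ with $Lg<\infty$ on $B(0,\rho)$. For $s\in u^\perp$ with $|s|\le \rho/2$, the map
\[
t \mapsto Lg(tu+s) = \int g(y)\, e^{s\cdot y}\, e^{t u\cdot y}\, dy
\]
is decreasing in $t\ge 0$ (since $u\cdot y\le 0$ on $\supp(g)$) and bounded by $Lg(s) < \infty$. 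Dominated convergence gives $Lg(tu+s)\to 0$ pointwise as $t\to\infty$ (the limit set $\{u\cdot y=0\}$ has Lebesgue measure zero), and Dini's theorem, applied to this monotone family of continuous functions on the compact disk $D = u^\perp\cap \overline{B(0,\rho/2)}$, upgrades this to uniform convergence on $D$. Hence there is some $T$ with $Lg(tu+s)\le 1$ for all $t\ge T$ and $s\in D$, so $(Lg(x))^q\ge 1$ on the half-cylinder $\{tu+s : t\ge T,\, s\in D\}$, which has infinite Lebesgue measure in $\R^n$. This forces $\int (Lg)^q = \infty$, i.e.\ $-qw\notin \domain(L\lap(f))$.

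The main obstacle is the reverse inclusion: pointwise decay along a single ray is not enough to make $(Lg)^q$ non-integrable, since we need the integral over all of $\R^n$ to diverge. The crucial trick is to exploit the monotonicity of $t\mapsto Lg(tu+s)$ supplied by the Hahn--Banach direction, so that Dini's theorem can upgrade pointwise decay to \emph{uniform} decay on a transverse disk, producing a half-cylinder of infinite volume on which $(Lg)^q$ is bounded below.
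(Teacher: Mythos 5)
Your proof is correct and takes a genuinely different route from the paper's in both directions. For the forward inclusion, the paper argues by contradiction: assuming $\int\lap(f)=\infty$, the log-concave integrability dichotomy (Fact~\ref{f:integrable}) produces a direction along which $\lap(f)$ fails to decay, and monotone convergence then forces $\supp(f)$ into a closed half-space missing the origin. You instead prove a direct, quantitative exponential lower bound $Lh(x)\geq c\,e^{\delta|x|}$ on the Laplace transform, which makes $(Lh)^q$ integrable by inspection; this is arguably more transparent. One small imprecision: it is not always possible to choose exactly $n+1$ points $y_0,\ldots,y_n\in\supp(h)$ with $B(0,\rho)\subset\conv(y_0,\ldots,y_n)$ (for instance, in $\R^2$ no three of the four vertices of a square contain the center in the interior of their triangle; Steinitz's bound $2n$ is sharp). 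But your argument only needs \emph{finitely many} points, and those always exist: pick a small simplex inside $\conv(\supp(h))$ around $0$ and apply Carath\'eodory to each vertex. So this is harmless. For the reverse inclusion, both proofs use Hahn--Banach to find a separating direction $u$ and show $Lg$ decays to zero along the ray $tu$; the paper then simply invokes Fact~\ref{f:integrable} (a log-concave function that does not tend to zero along a ray has infinite integral), whereas you re-derive that implication by hand: the monotonicity of $t\mapsto Lg(tu+s)$ lets Dini's theorem upgrade pointwise decay to uniform decay on a transverse compact disk, producing a half-cylinder of infinite Lebesgue measure on which $(Lg)^q\geq 1$. Your argument is more self-contained; the paper's is shorter because Fact~\ref{f:integrable} is already available as a lemma.
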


Note that, in dimension $n\ge 2$, the example of an indicator of a half-space, which is a log-concave function, shows that the inclusion above can be strict if we don't assume integrability. 

\begin{proof}
If $\lap(f)\equiv 0$, then $\domain (L\lap(f))=\R^n$ and the claim is trivial. We now suppose that $\lap (f) \not\equiv 0$. Note that 
$$-q z+ (-q)\, \intco(\supp(f)) =(-q) \Big(\intco(z +\supp(f))\Big) = (-q)\intco (\supp(\tau_{z} f )).$$ 
So, without loss of generality, it suffices to prove that 
\begin{equation}
\label{eq:intco_implication}
    0 \in \intco(\supp(f) )\Longrightarrow  \int_{\R^n} {\mathcal{L}}_p( f) <\infty.
\end{equation}
Assume that $\int_{\R^n} {\mathcal{L}}_p( f) =\infty$. Since $\lap(f)$ is log-concave, from Fact~\ref{f:integrable}, there exists $a\in\R^n$ and a direction $u\in \s^{n-1}$ such that $\lap(f)(a+tu)$ does not tend to $0$ as $t\to+\infty$. But the log-concavity of $\lap(f)$ implies that $\lap(f)(a+tu)$ has a limit in $(0,+\infty]$. We deduce that $L(f^{1/p}) (a+tu) \to \ell \in [0, \infty)$, as $t\to+\infty$.

Let us introduce the nonnegative function $g(x)=e^{a\cdot x} f^{1/p}(x)$, for which we have $L(g)(tu)\to \ell$. Introduce the open half-space 
$H^+=\{x\in \R^n\, ; \; x\cdot u >0\}$. We have
$$Lg (tu) =\intn g(x)\, e^{t u\cdot x}\, dx  \ge \int_{H^+} g(x)\,   e^{t u\cdot x}\, dx .
$$
By monotone convergence we have
$$ \lim_{t\to \infty}\int_{H^+} g(x) \, e^{t u\cdot x} \,dx = \int_{H^+} \big( g\times  \infty ). $$
Therefore $g=0$ almost everywhere on $H^+$, and so
$\textrm{supp} (g)\subseteq \R^n \setminus H^+$. 
 Since $\supp(g)=\supp(f)$, by definition, we also have
that $\supp(f)$ is contained in the convex set $\R^n \setminus H^+$, whose interior does not contain the origin. Thus, we have proved that $0\notin \intco(\supp(f))$. 

Assuming that $f$ is log-concave and integrable, we can prove the converse implication in~\eqref{eq:intco_implication}. Notice that, by hypothesis, we have, from \eqref{eq:majo_log_concave}, that $f(x) \leq Ce^{-c|x|}$. Thus, $f^{1/p}(x)\leq C^\prime e^{-c^\prime|x|}$, and so $f^{1/p}$ is integrable and more generally, $\lap(f)>0$ in a neighbourhood of zero. 

If $0 \notin \intco(\supp(f))$, then there exists $u\in \s^{n-1}$ such that $\supp(f) \subseteq H^-:=\{x\in\R^n; x\cdot u \leq 0\}$. Then, 
$$L(f^\frac 1p)(tu)=\int_{H^-}f^{1/p}(x)e^{tu\cdot x}dx.$$
We deduce from dominated convergence that
$\lim_{t\to\infty} L(f^\frac 1p)(tu) = 0$. Therefore the log-concave function $\lap(f)\not\equiv 0$ does not tend to zero at infinity, and so $\intn \lap(f) = \infty$. 
\end{proof}

We now prove Proposition~\ref{prop:sant}. Actually, let us rephrase and complete this proposition. 

\begin{prop} \label{prop:exist}

Let $f:\R^n\to\R^+$ such that $f\not\equiv 0$. Then:
\begin{enumerate} \item The convex function $L(\lap(f))$ has an open non-empty domain,
$$\domain(L\lap(f)) \neq \emptyset,$$ 
\item and the following assertions are equivalent:
\subitem{(i)} $\displaystyle \inf_z L(\lap(f))(z)\,  >0 $.
\subitem{(ii)}  $0$ lies in the interior of the support of $\lap(f)$.
\subitem{(iii)}  $L \lap(f)$ tends to $\infty$ on the boundary of its domain.
\subitem{(iv)} $\lap(f)\not \equiv 0$ and $\inf_z  L(\lap(f))(qz)$   is attained at a (unique) point $z_0$ in the domain of $L\lap f$.
\subitem{(v)} $\lap (f) \not \equiv 0$ and there exists $s_0\in\R^n$ such that $qs_0\in\domain(L\lap(f))$ and
$${\rm bar} (\lap(f)(x) e^{q x\cdot s_0})=0.$$

Moreover, the points $z_0$ in $(iv)$ and  $s_0$ in $(v)$ are equal (and denoted by $\sant(f)$). 
\end{enumerate}
\end{prop}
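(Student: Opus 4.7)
The plan is to apply Fact~\ref{fact:mf} to the log-concave function $g := \lap(f)$; once the non-emptiness of the domain of $L\lap(f)$ is settled, the rest is essentially a repackaging of that fact. For non-emptiness, note that $f \not\equiv 0$ forces $\supp(f)$ to have positive Lebesgue measure, hence to not be contained in any affine hyperplane, so $\intco(\supp(f)) \neq \emptyset$. Proposition~\ref{prop:finiteLp} then yields
\[
\domain(L\lap(f)) \supseteq (-q)\,\intco(\supp(f)) \neq \emptyset.
\]
If moreover $\lap(f)\not\equiv 0$, Fact~\ref{fact:mf}.1 applied to $g=\lap(f)$ ensures this domain is open; otherwise $L\lap(f)\equiv 0$ has domain $\R^n$.

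We next dispose of the degenerate case $\lap(f) \equiv 0$: then $L\lap(f) \equiv 0$, so (i) and (iii) fail (the latter read as coercivity, i.e.\ compact sublevel sets including at infinity); (iv) and (v) fail because they explicitly require $\lap(f) \not\equiv 0$; and $\supp(\lap(f))$ has empty interior, so (ii) fails as well. From now on assume $\lap(f) \not\equiv 0$ and set $g := \lap(f)$. Under (ii), i.e.\ $0 \in \inte(\supp(g))$, Fact~\ref{fact:mf}.2 yields both (iii) (its proof provides an exponential lower bound forcing coercivity at infinity) and the existence of a unique minimizer $\tilde z_0 \in \domain(Lg)$ of $Lg$, characterized by $\nabla Lg(\tilde z_0) = 0$, i.e.\ ${\rm bar}(g(x)e^{x\cdot \tilde z_0}) = 0$. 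Setting $s_0 := \tilde z_0/q$ gives (v); the point $z_0 := s_0$ is then the unique minimizer of $z \mapsto Lg(qz)$ by strict convexity (Fact~\ref{fact:mf}.1), giving (iv); and $Lg(\tilde z_0) > 0$ since $g \not\equiv 0$, so (i) holds. Conversely, if (ii) fails, Fact~\ref{fact:mf}.3 produces a direction along which $Lg \to 0$, so $\inf Lg = 0$, directly contradicting (i) and (iii); and any critical point of $Lg$ would, by convexity, be a global minimizer at a strictly positive value (since $Lg > 0$ on its domain), which is incompatible with $\inf Lg = 0$, so (iv) and (v) fail as well. The identification $z_0 = s_0 = \sant(f)$ comes from uniqueness of the minimizer of the strictly convex function $z \mapsto Lg(qz)$.

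The main obstacle is organizational rather than technical: one must carefully isolate the degenerate case $\lap(f) \equiv 0$ and read condition (iii) as the coercivity of $L\lap(f)$ (including at infinity when the effective domain of $L\lap(f)$ is unbounded). Once these interpretive points are clarified, the statement reduces directly to Fact~\ref{fact:mf} applied to $g = \lap(f)$, and no further analytic work is required.
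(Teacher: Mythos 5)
Your proof is correct and follows essentially the same route as the paper's: non-emptiness of the domain via Proposition~\ref{prop:finiteLp} (using that $f\not\equiv 0$ forces $\supp(f)$ to have positive measure, hence non-empty convex hull interior), and the five equivalences all pulled out of Fact~\ref{fact:mf} applied to the log-concave function $g=\lap(f)$. Your organizational choice of pivoting on (ii)~iff~all, and your explicit handling of the degenerate case $\lap(f)\equiv 0$ (including the coercivity reading of (iii)), are small stylistic variations of the paper's chain $(i)\Leftrightarrow(ii)\Rightarrow(iii)\Rightarrow(iv)\Leftrightarrow(v)\Rightarrow(i)$, but the mathematical content is the same.
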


\begin{proof}
We give two different proofs of the property $1.$

It can be seen as a direct consequence of Proposition~\ref{prop:finiteLp}. Indeed, since $f\not\equiv 0$, the support $\supp(f)$ cannot be contained in an affine hyperplane, since it is of non-zero measure, and therefore its convex hull is of non-empty interior. 

We give an alternative proof that relies on Fact~\ref{fact:finite}. 
Define the function $F = -\log \lap(f) = (-q)\log L(f^{1/p}):\R^n\to\R\cup\{\infty\}.$ This function is convex, continuous on the interior of its domain and, from Fatou's lemma, it is lower-semi-continuous on its domain. Assume that the domain of  $L\lap(f)=L(e^{-F})$ is empty. 
Then, by Fact~\ref{fact:finite}, $F$ must be  affine along some line. Therefore, there exists $u\in\s^{n-1}$, $v\in u^\bot$ and $\alpha,\beta\in \R$ such that for all $t\in \R$,
$\log L(f^{1/p})(tu+v) = \alpha t+ \beta$, or equivalently
\begin{equation}
\label{eq:contradiction}
\int_{\R^n}f^{1/p}(y)e^{(tu+v)\cdot y}dy = e^{\alpha t +\beta}.
\end{equation}
We now use Fubini's theorem; by defining the function
$$g(s)=\int_{u^\perp}f^{1/p}(su+w)e^{v\cdot w}dw,\qquad\forall s\in \R,$$
we deduce, from \eqref{eq:contradiction}, that
$$\int_{\R}g(s+\alpha) e^{ts}ds=e^{\beta}.$$
This means that  the Laplace transform of $s\mapsto g(s+\alpha)$ is constant, which implies that  $g\equiv 0$, since otherwise the Laplace transform is strictly convex. In turn, this implies that $f$ is zero almost everywhere on $u^\perp+su$ for almost all $s$, contradicting $f\not\equiv 0$. 
\medskip

We now move on to the proof of 2. We assume that $\lap(f)\not\equiv 0$ since all properties require it, explicitly or implicitly. From Fact~\ref{fact:mf} applied to the log-concave function $\lap(f)$,
we have $(i)\Leftrightarrow(ii)\Rightarrow(iii)\Rightarrow (iv)\Leftrightarrow (v)$, using the smoothness and (strict) convexity of $L(\lap(f))$. 
The presence of $q$ is immaterial in the Laplace transform. Obviously, $(iii)$ implies $(i)$. Finally, assume $(v)$. This implies that $z\mapsto \log L(\lap(f))(z)$  has a critical point at $s_0$, and this point has therefore to be the (unique) point where the function attains its infimum. In particular we have $(i)$.  
\end{proof}

Let us give a slightly different way of passing from $(v)$ to $(ii)$ in the previous Proposition. Assume that there exists $s_0$ in the domain of $L(\lap(f))$ such that the barycenter of $x\mapsto g(x):= e^{s_0\cdot x} \lap(f)(x) $ is at the origin. Then $g$ is a log-concave function with $0<\int g < \infty$ and, from that, one can deduce that $g$ has a barycenter that must be in the interior of its support. We conclude by noting that the support of $g$ is equal to the support of $\lap(f)$. 

In order to treat extremal situations, which have to be consistent with the statement in Theorem~\ref{t:main_sant}, it is convenient to have one more sufficient (and sometimes necessary) condition for the infimum to vanish. 

\begin{fact}\label{final:f}
Let $f$ be a nonnegative function on $\R^n$ and  $p\in (0,1)$. Then we have
$$\int_{\R^n} f = \infty \Longrightarrow \inf_z \int_{\R^n}\lap(\tau_z f) = 0.$$
If $f$ is log-concave, the converse is also true. 
\end{fact}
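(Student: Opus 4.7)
The first step is to rewrite everything in terms of $g := \lap(f)$, via the identity $\int \lap(\tau_z f) = L(\lap f)(qz)$ that follows from \eqref{eq:trans}. So the statement reduces to: $\int f = \infty \Rightarrow \inf L g = 0$, together with its converse when $f$ is log-concave. Whenever $f \not\equiv 0$, the function $g$ is log-concave and $\domain(Lg)$ is nonempty (Proposition~\ref{prop:exist}); Fact~\ref{fact:mf} then supplies the key dichotomy that $\inf Lg > 0$ if and only if $0 \in \inte(\supp g)$, which is the same as $L(f^{1/p})$ being finite on some open ball around the origin. The case $\lap(f) \equiv 0$ is trivial. Both halves of the statement thus come down to comparing the integrability of $f$ with the finiteness of $L(f^{1/p})$ near the origin.

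For the forward direction, I would argue by contrapositive: starting from $L(f^{1/p})$ finite on a ball $B(0,\delta)$, the goal is $\int f < \infty$. The plan is a two-step H\"older argument. First, using the elementary bound $e^{\delta'|z|} \leq \sum_{\epsilon \in \{-1,1\}^n} e^{\delta' \epsilon \cdot z}$ with $\delta' > 0$ small enough that every $\delta' \epsilon$ lies in $B(0,\delta)$, I would deduce $\int f^{1/p}(z)\, e^{\delta'|z|}\, dz < \infty$. Second, H\"older with conjugate exponents $1/p$ and $1/(1-p)$ applied to the factorisation $f = (f^{1/p} e^{\delta'|z|})^{p} \cdot e^{-p\delta'|z|}$ bounds $\int f$ by a product of two finite integrals. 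Notably, this direction requires no log-concavity on $f$.

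For the converse under log-concavity, the argument is essentially a one-liner: if $f$ is log-concave and integrable, Fact~\ref{f:integrable} gives an exponential bound $f(x) \leq A\, e^{-B|x|}$, so $f^{1/p}(x) \leq A^{1/p}\, e^{-B|x|/p}$, which makes $L(f^{1/p})$ finite on the ball $\{|x| < B/p\}$. The second part of Fact~\ref{fact:mf} then supplies a positive minimum for $L(\lap f)$. The only substantive obstacle I foresee is the two-step H\"older extraction in the forward direction; the rest is a direct invocation of Facts~\ref{f:integrable} and~\ref{fact:mf}.
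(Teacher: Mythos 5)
Your proof is correct, and the skeleton is the same as the paper's: reduce to showing that finiteness of $L(f^{1/p})$ on a neighbourhood of the origin forces $\int f < \infty$ (via Proposition~\ref{prop:exist}/Fact~\ref{fact:mf}), and prove the converse for log-concave $f$ by the exponential decay bound from Fact~\ref{f:integrable}. Where you diverge is in the technical extraction step of the forward direction: the paper integrates $L(f^{1/p})$ over the cube $[-r,r]^n$ (using continuity of the log-concave function $\lap(f)$ on the interior of its support to get boundedness), which produces the weight $\prod_i \sinh(r y_i)/y_i$, and then applies H\"older against that weight. You instead evaluate $L(f^{1/p})$ only at the $2^n$ vertices $\delta'\varepsilon$, use the pointwise bound $e^{\delta'|z|} \le \sum_{\varepsilon\in\{\pm1\}^n} e^{\delta'\varepsilon\cdot z}$ to obtain $\int f^{1/p}(z)e^{\delta'|z|}dz < \infty$, and then apply H\"older to the factorisation $f = (f^{1/p}e^{\delta'|z|})^p\, e^{-p\delta'|z|}$. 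Both arguments are sound; yours is marginally leaner in that it needs only finiteness of $L(f^{1/p})$ at finitely many points rather than boundedness on a cube, and the dual integrand $e^{-\frac{p}{1-p}\delta'|z|}$ is visibly integrable, whereas the paper must observe separately that $\int_\R (\sinh y/y)^{p/(p-1)}dy < \infty$.
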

\begin{proof} Note that we can enforce $f\not\equiv 0$, since 
$f\equiv 0 \Rightarrow \lap(f)\equiv \infty$. 

 Let us assume that $\inf_z \int_{\R^n}\lap(\tau_z f) > 0$, and prove that $f$ is integrable. The hypothesis gives $\inf_z L(\lap(f))(z)>0$, hence, from Proposition~\ref{prop:exist}, we must have $0\in {\rm int}(\supp(\lap(f)))$. Thus we can find $r>0$ so that $[-r,r]^n \subset {\rm int}(\supp(\lap(f)))$. We then obtain for $x\in [-r,r]$ that $L(f^\frac{1}{p})(x) \in [0,\infty)$, that is, $[-r,r]^n \subset {\rm int}(\domain(L(f^\frac{1}{p})))$. Since $\lap(f)$ is log-concave, it is continuous on the interior of its domain. Consequently, $L(f^\frac{1}{p})$ is bounded on $[-r,r]^n$. Observe then that
    \begin{align*}
        \infty &> \int_{[-r,r]^n}L(f^\frac{1}{p})(x)dx = \int_{\R^n}f^\frac{1}{p}(y) \int_{\{\|x\|_\infty \le r\}} e^{x\cdot y} dx dy
        \\
        &=2^n\int_{\R^n}f^\frac{1}{p}(y) \prod_{i=1}^n \frac{\sinh(ry_i)}{y_i} dy.
    \end{align*}
    We also note that
    $\int_{\R}\left(\frac{\sinh(y)}{y}\right)^\frac{p}{p-1}dy < \infty$, since $p/(p-1)<0$. Finally, we obtain, from H\"older's inequality,
    \begin{align*}
        \int_{\R^n}f(y)dy &= \int_{\R^n}\left[f^\frac{1}{p}(y) \prod_{i=1}^n \frac{\sinh(ry_i)}{y_i}\right]^p\left[\prod_{i=1}^n \frac{\sinh(ry_i)}{y_i}\right]^{-p}dy
        \\
        &\leq \left(\int_{\R^n}f^\frac{1}{p}(y) \prod_{i=1}^n \frac{\sinh(ry_i)}{y_i} dy\right)^p\left(\int_{\R^n}\left[\prod_{i=1}^n \frac{\sinh(ry_i)}{y_i}\right]^{\frac{p}{p-1}}dy\right)^{1-p},
    \end{align*}
    and the claim follows from the above observations.

    Conversely, assume that $f$ is log-concave, with $0<\intn f <\infty$. This implies that $f(x)\le a e^{-b |x|}$ for some constants $a,b>0$. Thus, we have a similar upper bound for $f^{1/p}$, and this implies that $L(f^{1/p})$ is finite in a neighborhood of the origin. In turn, this means that $\lap(f)$ is strictly positive in a neighborhood of the origin, and therefore, by Proposition~\ref{prop:exist}, the infimum of $L\lap(f)$ is strictly positive. 
\end{proof}

Finally, we verify that, if $f$ is even and $\lap(f)\not\equiv 0$, then its Laplace-Santal\'o point is the origin.
    \begin{prop}
    \label{p:even}
    Let $f$ be a nonnegative, \emph{even} function 
 on $\R^n$. Then, we have
  $$ \Big(\int_{\R^n}f \Big)\, \inf_z  \Big(\int_{\R^n}{\mathcal  L}_p (\tau_z f) \Big)^{-p/q}
  = \Big(\int_{\R^n}f \Big)\, \Big(\int_{\R^n}{\mathcal  L}_p (f) \Big)^{-p/q}.
$$ 
 Moreover, if $\lap(f) \not\equiv 0$,  then $\sant(f)=0$.
\end{prop}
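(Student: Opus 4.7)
The plan is to propagate the evenness of $f$ through each of the Laplace-type transforms involved, and then to conclude by the elementary fact that an even convex function attains its minimum at the origin.

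First, I observe that if $h:\R^n\to[0,\infty)$ is even, then its Laplace transform $Lh$ is even as well: the change of variables $y\mapsto -y$ in $Lh(x)=\intn h(y)\, e^{x\cdot y}\,dy$ yields $Lh(x)=Lh(-x)$. Applied to $h=f^{1/p}$, this gives that $L(f^{1/p})$ is even, hence $\lap(f)=(L(f^{1/p}))^q$ is even, and therefore so is $L(\lap(f))$. Being even and convex (with values in $[0,\infty]$), the function $L(\lap(f))$ attains its minimum at the origin, since for every $w\in\R^n$
$$L(\lap(f))(0)\le \tfrac{1}{2} L(\lap(f))(w)+\tfrac{1}{2} L(\lap(f))(-w)=L(\lap(f))(w).$$
Combining this with the identity $\intn \lap(\tau_z f)(x)\, dx=L(\lap(f))(qz)$ (a direct consequence of~\eqref{eq:trans}), the infimum of $\intn \lap(\tau_z f)$ over $z\in\R^n$ equals $L(\lap(f))(0)=\intn \lap(f)$, which is the first claim.

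For the second statement, the strategy is to use Proposition~\ref{prop:exist} to extract uniqueness of the minimizer and then identify it with $0$. If $\lap(f)\not\equiv 0$, the convex set $\{L(f^{1/p})<\infty\}=\{\lap(f)>0\}$ has positive Lebesgue measure, hence non-empty interior. By evenness of $f^{1/p}$, this set is centrally symmetric about the origin, so $0$, as the midpoint of any $x$ and $-x$ in its interior, belongs to its interior as well. Since $\lap(f)$ is continuous on the interior of its support, this places $0$ in the interior of $\supp(\lap(f))$, verifying condition~(ii) of Proposition~\ref{prop:exist}. That proposition then guarantees that $L(\lap(f))$ attains its (non-zero) minimum at a unique point $\sant(f)$; since $0$ is a minimizer by the first part, uniqueness forces $\sant(f)=0$.

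The only minor obstacle is ensuring that $0$ sits in the \emph{interior} of $\supp(\lap(f))$ when $\lap(f)\not\equiv 0$, so that the uniqueness clause of Proposition~\ref{prop:exist} becomes available; once the central symmetry of $\{L(f^{1/p})<\infty\}$ and the non-emptiness of its interior are recorded, everything else reduces to the one-line symmetry argument for even convex functions.
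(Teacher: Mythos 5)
Your proof is correct. It uses the same underlying ingredients as the paper (evenness of $\lap(f)$, convexity of its essential support, and Proposition~\ref{prop:exist}), but with the logical arrows reversed. The paper first identifies $\sant(f)=0$ via the barycenter characterization in part~$(v)$ of Proposition~\ref{prop:exist} (using that $\lap(f)$ is even, hence has barycenter at the origin), and then deduces the equality of the two infima from this. You instead establish the equality \emph{first}, purely from the elementary fact that the even convex function $L(\lap(f))$ is minimized at the origin, with no reference to Proposition~\ref{prop:exist}, and then deduce $\sant(f)=0$ by combining this with the uniqueness of the minimizer furnished by the implication $(ii)\Rightarrow(iv)$ of that proposition. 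Your route is slightly more economical for the first claim (it also handles $\lap(f)\equiv 0$ uniformly without a case split), while the paper's route proves the barycenter identity $\operatorname{bar}(\lap(f))=0$, which is the content of part~$(v)$ and thematically aligned with the rest of Section~2.
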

\begin{proof}
If $\lap(f)\equiv 0$, then the equality is trivial. Let us assume that $\lap(f) \not\equiv 0$.  The support of the log-concave function $\lap(f)$ is a convex set with non-empty interior, and symmetric, so it contains the origin in its interior. Thus, by Proposition~\ref{prop:exist},  the infimum is attained at a unique point $\sant(f)$. On the other hand, $\lap(f)$ has a well defined barycenter, which is the origin. Therefore we find that $\sant(f)=0$, and the equality also follows in this case.  
\end{proof}
We note that we could also have alternatively used Proposition~\ref{prop:exist} $(v)$ to prove Proposition~\ref{p:even}.

\subsection{The case $p=0$: the (essential) polar}

In order to pass to the limit as $p\to 0^+$ and obtain results for the polar transform and the usual volume product, we need to establish the analogues of
 Propositions~\ref{prop:exist} and \ref{prop:finiteLp} when $p=0$. This is easier and essentially well-known. Actually, it can be deduced from the case $p>0$ by passing to the limit, although this is not the shortest path. We include the proofs for completeness.  
 
We set
\begin{equation}
f^\square(x):= {\rm ess}\!\inf_{y\in\R^n}\frac{e^{-x\cdot y}}{f(y)}.
\label{eq:ess_polar}
\end{equation}
For fixed $x,x'\in \R^n$ and $\lambda\in (0,1)$ we have $f^\square((1-\lambda)x+\lambda x')\ge f^\square(x)^{1-\lambda} f^\square(x')^\lambda$. So when $f\not\equiv 0$, the function $f^\square:\R^n\to [0,\infty)$ is a log-concave function, that coincides with the polar function $f^\circ$ when $f$ is continuous or when $f$ is log-concave. 
We next recall that \eqref{eq:limit_p} holds, i.e. that the limit of $\lap(f)\left(\frac{x}{p}\right)$ exists and equals $f^\square$ as defined in \eqref{eq:ess_polar}, except possibly on a set of measure zero, with no restrictions on $f$.
\begin{fact}
\label{f:supp_converge}
    Let $f:\R^n\to [0,\infty)$ be a function such that $f \not\equiv 0$. Then for every $x\in \R^n$ such that $x\in\interior(\supp (f^\square))$ or $f^\square(x)=0$, and thus for almost-every $x\in \R^n$, 
%one has, for every $x\in \R^n\setminus\{\partial ({\rm supp} (f^\square)) \cap \{f^\square \not = 0\}\},$
\begin{equation}
        f^\square(x)=\lim_{p\to0^+}\lap(f)\left(\frac{x}{p}\right).
    \end{equation}
\end{fact}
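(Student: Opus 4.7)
The whole matter rests on a simple change of variables. Setting $h_x(y):=f(y)\,e^{x\cdot y}$, one computes
\[
\lap(f)(x/p)=\Big(\intn h_x(y)^{1/p}\, dy\Big)^{p/(p-1)} = \|h_x\|_{L^{1/p}(\R^n)}^{1/(p-1)},
\]
while $\|h_x\|_\infty = 1/f^\square(x)$ with the conventions $1/0=\infty$, $1/\infty=0$. Since $1/(p-1)\to -1$ as $p\to 0^+$, the claim reduces to proving $\|h_x\|_{L^{1/p}}\to \|h_x\|_\infty$ under each of the two stated conditions.

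If $f^\square(x)=0$, that is $\|h_x\|_\infty=\infty$, fix $M>0$ and consider $A_M:=\{h_x>M\}$, which has positive measure. The trivial bound
\[
\|h_x\|_{L^{1/p}}=\Big(\intn h_x^{1/p}\Big)^p\ge M\,|A_M|^p
\]
(to be read as $+\infty$ when $|A_M|=\infty$) tends to $M$ as $p\to 0^+$; since $M$ is arbitrary, $\|h_x\|_{L^{1/p}}\to \infty$, and thus $\lap(f)(x/p)\to 0 = f^\square(x)$.

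If $x_0\in \interior(\supp(f^\square))$, the log-concave function $f^\square$ is continuous there, so I can pick $\varepsilon,c>0$ with $f^\square\ge c$ on the ball $B:=B(x_0,\varepsilon)$. This tells me that for each $x'\in B$ one has $h_{x'}(y)\le 1/c$ outside a null set $N_{x'}$. The first real work is to make this uniform in $x'$: taking a countable dense $\{x'_k\}\subset B$ and exploiting the continuity of $x'\mapsto h_{x'}(y)$ for each fixed $y$, I will produce a single null set $N$ outside of which $h_{x'}(y)\le 1/c$ holds simultaneously for every $x'\in B$. Taking the supremum over $x'\in B$ on the left (for $y\notin N$) then yields the exponential decay
\[
h_{x_0}(y)\le (1/c)\, e^{-\varepsilon|y|}.
\]
In particular, $h_{x_0}\in L^r$ for every $r>0$, so the classical two-sided estimates (H\"older from above, Chebyshev from below on $\{h_{x_0}>M\}$ for $M<\|h_{x_0}\|_\infty$) give $\|h_{x_0}\|_{L^{1/p}}\to \|h_{x_0}\|_\infty = 1/f^\square(x_0)$, and hence $\lap(f)(x_0/p)\to f^\square(x_0)$.

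Finally, the set where neither condition of the statement holds is $\{f^\square>0\}\setminus \interior(\supp(f^\square))$. Since $f^\square$ is log-concave, this is contained in $\supp(f^\square)\setminus \interior(\supp(f^\square))$, which is of Lebesgue measure zero (either because a convex set with non-empty interior has boundary of measure zero, or because a convex set with empty interior is itself of measure zero). The principal obstacle in the argument is the uniformity step in the second case: producing a single null set on which to take the supremum over $x'\in B$ is what unlocks the exponential decay of $h_{x_0}$ and, with it, the convergence of the $L^{1/p}$-norms to the $L^\infty$-norm.
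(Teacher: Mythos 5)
Your proof is correct and follows essentially the same path as the paper's: rewrite $\lap(f)(x/p)=\|h_x\|_{L^{1/p}}^{1/(p-1)}$ with $h_x(y)=f(y)e^{x\cdot y}$, note $\|h_x\|_\infty=1/f^\square(x)$, and reduce to the $L^{r}\to L^\infty$ convergence of norms. The case $f^\square(x)=0$ is handled identically; for $x_0\in\interior(\supp(f^\square))$, the paper translates to $x_0=0$ and shows $f(y)\le(1/c)e^{-C|y|}$ a.e.\ via a countable dense set of directions, while you work in place and obtain $h_{x_0}(y)\le(1/c)e^{-\varepsilon|y|}$ a.e.\ via a countable dense set in $B(x_0,\varepsilon)$ --- the same uniformization trick, expressed without the translation step.
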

\begin{proof}
    Recall first the following classical fact. Let $g$ be any function on $\R^n$ such that $g\not\equiv 0$. Then, if $\|g\|_{\infty} = \infty$, one has $\|g\|_{p^\prime} \to \infty$ (with no assumptions on $g$) as $p^\prime \to \infty$. If $\|g\|_{\infty} < \infty$ and there exists $p_0$ such that $\|g\|_{p_0} <\infty$, then $\|g\|_{p^\prime} \to \|g\|_\infty$ as $p^\prime \to \infty$.

Fix $p\in (0,1)$. For a fixed $x\in \R^n$, we set $g_x(y) = e^{x\cdot y}f(y)$ and write $p^\prime = 1/p$. With this notation, we have $\lap(f)\left(\frac{x}{p}\right) = \|g_x\|_{p^\prime}^\frac{1}{p-1}$. Clearly, the exponent $\frac{1}{p-1}$ will not play such a serious role in the analysis. The key observation is that $\|g_x\|_{\infty} = \frac{1}{f^\square(x)}$. 

We will break the proof into two parts: $x_0\in \{f^\square =0\}$ and $x_0\in {\rm int}({\rm supp} (f^\square))$. The first case is easy: if $x_0\in \{f^\square =0\}$, then, $\|g_{x_0}\|_\infty = \infty$, and thus, as stated early, it holds $\|g_{x_0}\|_{p^\prime} \to \infty$. We deduce that $\lap(f)\left(\frac{x_0}{p}\right) \to 0 (=f^\square(x_0)).$
Next, assume that $x_0\in {\rm int}({\rm supp} (f^\square))$. It suffices to show
\begin{equation}\label{eq:intfsquare}
    x_0\in {\rm int}({\rm supp} (f^\square)) \Longrightarrow \int f(x) e^{x\cdot x_0}\, dx < \infty,
\end{equation}
which gives that $\|g_{x_0}\|_1<\infty$ and allows to conclude as recalled above. 
To prove~\eqref{eq:intfsquare}, note that $\tau_{-x_0} (f^\square) = (g_{x_0})^\square$, so it suffices to consider the case $x_0=0$. Since $f^\square$ is log-concave, we have then $f^\square(x) \ge c 1_{C B_2^n}(x)$ for some constant $c,C>0$. Let $S$ be a countable dense subset of the sphere. We have, by definition,  that, for almost all $y\in \R^n$, for all $x\in S$, 
$$f(y) \le \frac1c e^{- C\,  x\cdot y}.$$
By letting $x$ approach $y/|y|$ we find 
$$f(y) \le \frac1c e^{- C |y|} \quad \mbox{ \rm for a.e }\, y.$$
Therefore $f$  is integrable, as wanted. 
\end{proof}

Note that we still have the property that, for any $z,x\in \R^n$,
\begin{equation}\label{eq:trans_square}
    (\tau_z f)^\square (x)= e^{-x\cdot z} f^\square (x).
\end{equation}

\begin{prop}
\label{prop:ess}
Let $f$ be a nonnegative function on $\R^n$, with $f\not\equiv 0$. Then:
    \begin{enumerate}
        \item We have that
        $$\domain(L (f^\square))\supseteq \intco(\supp(f)).$$ 
If $f$ is log-concave and integrable, there is equality in the previous assumption. Anyway, $$\domain(L(f^\square)) \neq \emptyset.$$ 
\item Additionally, the following assertions are equivalent:
\subitem{(i)} $\displaystyle \inf_z L(f^\square)(z)\,  >0 $.
\subitem{(ii)}  $0$ lies in the interior of the support of $f^\square$.
\subitem{(iii)}  $L (f^\square)$ tends to $\infty$ on the boundary of its domain.
\subitem{(iv)} $f^\square\not \equiv 0$ and $\inf_z  L(f^\square)(-z)$   is attained at a (unique) point $z_0$ in the domain of $L(f^\square)$.
\subitem{(v)} $f^\square \not \equiv 0$ and there exists $-s_0\in\R^n$ such that $s_0$ is in the domain of $L(f^\square)$ and
$${\rm bar} (f^\square(x) e^{-x\cdot s_0})=0.$$

Moreover, the points $z_0$ in $(iv)$ and  $s_0$ in $(v)$ are equal (and denoted by $s_{\rm ess}(f)$).
    \end{enumerate}

\end{prop}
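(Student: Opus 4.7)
This proposition is the $p=0$ analogue of Propositions~\ref{prop:finiteLp} and~\ref{prop:exist}, with $f^\square$ playing the role of $\lap(f)$. Since $f^\square$ is log-concave whenever $f\not\equiv 0$, the strategy is to apply Fact~\ref{fact:mf} to the log-concave function $g=f^\square$, once the non-emptiness of $\domain(L(f^\square))$ has been secured. The translation identity \eqref{eq:trans_square} plays here the role of \eqref{eq:trans}, allowing the usual recentering arguments.

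For the inclusion $\intco(\supp(f))\subseteq \domain(L(f^\square))$ in Part 1, I would reduce, via \eqref{eq:trans_square}, to the implication
\[
0\in \intco(\supp(f))\ \Longrightarrow\ \int_{\R^n} f^\square<\infty.
\]
The key pointwise bound is: for any $y_0\in \supp(f)$ and any $r>0$, the definition of support produces $\delta>0$ and a positive-measure set $E\subseteq B(y_0,r)$ on which $f\ge \delta$, giving $f^\square(x)\le e^{-x\cdot y_0+r|x|}/\delta$ for every $x\in\R^n$. Selecting points $y_0,\dots,y_n\in \supp(f)$ whose convex hull contains $0$ in its interior, a standard compactness argument on the sphere yields $c>0$ with $\max_i x\cdot y_i\ge c|x|$ for all $x$; choosing $r<c$ and taking the minimum over $i$ then provides a uniform exponential upper bound for $f^\square$, hence its integrability. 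The non-emptiness of $\domain(L(f^\square))$ follows automatically, since $f\not\equiv 0$ forces $\supp(f)$ to have positive measure and therefore $\intco(\supp(f))\neq \emptyset$.

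For the equality in the log-concave integrable case, I would establish the reverse inclusion by contradiction. After translating by \eqref{eq:trans_square}, it suffices to show that $0\notin\intco(\supp(f))$ and $\int f^\square<\infty$ cannot both hold. A Hahn-Banach separation produces $u\in\s^{n-1}$ such that $\supp(f)\subseteq\{y\cdot u\le 0\}$. Writing $f=e^{-\psi}$, direct inspection of the Legendre transform gives $\psi^*(x+tu)\le\psi^*(x)$ for all $x\in\R^n$ and $t\ge 0$ (since $y\cdot u\le 0$ throughout $\domain(\psi)$). Using that $f^\square=f^\circ=e^{-\psi^*}$ for log-concave $f$, the log-concave function $f^\square$ is non-decreasing along the direction $u$. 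But Fact~\ref{f:integrable}$(iii)$, applied to the integrable log-concave $f^\square$, would force $f^\square(x+tu)\to 0$ as $t\to+\infty$, and combined with the monotonicity this gives $f^\square\equiv 0$, contradicting $f^\square(0)=1/\sup f>0$ (where the boundedness of $f$ comes from \eqref{eq:majo_log_concave}).

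Part 2 is then a direct application of Fact~\ref{fact:mf} to $g=f^\square$, whose $L$-domain is non-empty by Part 1. The chain $(ii)\Leftrightarrow(iii)\Rightarrow(iv)\Leftrightarrow(v)$ comes from Fact~\ref{fact:mf}(2) together with the smoothness and strict convexity of $\log L(f^\square)$ on its open domain; the $-x\cdot s_0$ in $(v)$ matches Fact~\ref{fact:mf}(2) through the sign in \eqref{eq:trans_square}. The implication $(iv)\Rightarrow(i)$ is trivial since $L(f^\square)>0$ on its domain when $f^\square\not\equiv 0$, and $(i)\Rightarrow(ii)$ follows from the contrapositive of Fact~\ref{fact:mf}(3): if $0$ is not in the interior of $\supp(f^\square)$, there is a half-line along which $L(f^\square)\to 0$, contradicting a strictly positive infimum. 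The main obstacle is the converse inclusion in the log-concave case: the monotonicity $\psi^*(x+tu)\le\psi^*(x)$ is short to verify, but the conclusion that it forces non-integrability is inherently one-sided and requires invoking the half-line decay statement Fact~\ref{f:integrable}$(iii)$, rather than any two-sided exponential bound.
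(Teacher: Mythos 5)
Your proof is correct, and for the inclusion $\intco(\supp(f))\subseteq\domain(L(f^\square))$ in Part~1 you take a genuinely different route from the paper. The paper argues by contrapositive: assuming $\int f^\square=\infty$, it invokes Fact~\ref{f:integrable} to produce a ray along which the log-concave $f^\square$ does not decay, then uses the essential-infimum definition to deduce that $f$ vanishes a.e.\ on an open half-space, hence $0\notin\intco(\supp(f))$. You instead prove the implication directly: from $0\in\intco(\supp(f))$ you extract finitely many points $y_i\in\supp(f)$ whose convex hull has $0$ in its interior, use positive-measure balls $B(y_i,r)$ on which $f\ge\delta_i>0$ to get the pointwise bound $f^\square(x)\le e^{-x\cdot y_i+r|x|}/\delta_i$, and combine with the coercivity $\max_i x\cdot y_i\ge c|x|$ (compactness on $\s^{n-1}$) to obtain a uniform exponentially decaying upper bound on $f^\square$ once $r<c$. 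This is arguably more self-contained since it does not call on Fact~\ref{f:integrable} for this direction, at the cost of the finite-simplex extraction and compactness step. For the equality in the log-concave integrable case your argument is close to the paper's direct proof, merely phrased through the Legendre transform $\psi^*$ and monotonicity of $f^\circ=e^{-\psi^*}$ along the separating direction $u$, while the paper tracks the lower bound $f^\square(-tu)\ge 1/C$ directly from the essential-infimum definition; both hinge on Fact~\ref{f:integrable}(iii) in the same way. (The paper also offers an alternative proof of the equality by taking the $p\to 0^+$ limit of Proposition~\ref{prop:finiteLp}, which you do not replicate, but that is optional.) Part~2 is the same direct application of Fact~\ref{fact:mf} to $g=f^\square$ in both.
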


\begin{proof}
    Like before, we start with the proof of $1.$ If $f^\square\equiv 0$ then the claim is trivial,  so in the sequel we assume that $f^\square \not\equiv 0$, that is the support of the log-concave function $f^\square$ has non-empty interior. Like in the proof of Proposition~\ref{prop:finiteLp}, we may shift $f$ so that it suffices to prove
    $$ 0 \in \intco(\supp(f) )\Longrightarrow  \int_{\R^n} f^\square <\infty.$$
When $\int_{\R^n} f^\square =\infty$, it follows from Fact~\ref{f:integrable}, since $f^\square$ is log-concave, that there exists $u,v\in \R^n$, $v\neq 0$, and $\ell\in(0,+\infty]$ such that
$f^\square(u+ t v)\to \ell$ as $t\to \infty$. Consider
the open half-space $H^-=\{ y\in \R^n\; ; \; y\cdot v <0\}$. By definition, we have that, for every $k\in \N$,
$$\frac{e^{u\cdot y}}{f(y)}\, e^{ k v \cdot y}
\ge f^\square(u + k v), \quad \textrm{ for almost all $y\in \R^n$.}$$
But, when  $y\in H^-$, $e^{k v\cdot y} \to 0$ as $k\to \infty$. Therefore, $f(y) =0$ for almost all $y\in H^-$, that is $\supp(f)\subseteq \{ x\in \R^n \; ; \; x\cdot v \ge 0\}$ and so $0 \notin \intco(\supp(f) )$.

We now give two proofs for the equality when $f$ is $\log$-concave and integrable.

We first note that it follows from Proposition~\ref{prop:finiteLp} by taking limits. Indeed, we set $D_p(f)= \domain\left(L\left(\lap(f)\left(\frac{\cdot}{p}\right)\right)\right)$ and $D(f)=\domain(L (f^\square))$. Assume $0\in D(f)$. From Fact~\ref{fact:limit_integrals}, there exists $\delta,N>0$ such that $\delta B_2^n \subset D_p(f)$ for $0<p<1/N$. From Proposition~\ref{prop:finiteLp}, 
$ D_p(f)= \frac{1}{1-p}\inte(\supp(f))$. Thus, $(1-p)\delta B_2^n \subset\inte(\supp(f))$ for every $0<p<1/N.$ Consequently, $0\in\inte(\delta B_2^n) \subset  \inte(\supp(f))$.

For a direct proof, note that for the integrable log-concave function $f\not\equiv0$, we have that $f(x) \leq Ce^{-c|x|}$,  and so in particular $f^\square>0$ in a neighborhood of zero.
If $0 \notin \intco(\supp(f))$, then there exists $u\in \s^{n-1}$ such that $\supp(f) \subseteq H^-:=\{x\in\R^n; x\cdot u \leq 0\}$. Then, 
$$f^\square(-tu) =  {\rm ess}\!\inf_{x\in H^-} \frac{e^{-t x \cdot u}}{f(x)}\ge 
\frac1C .$$
Therefore the log-concave function $f^\square \not\equiv 0$ does not tend to zero at infinity, hence $\intn f^\square = \infty$. 

For the proof of $2.$, this is, as before, a straightforward application of Fact~\ref{fact:mf} to the log-concave function $g=f^\square$.
\end{proof}

We next have the analogue of Fact~\ref{final:f} for $p=0$.

\begin{fact}
\label{f:final_2}
Let $f$ be a nonnegative function on $\R^n$. Then, we have
    $$\intn f = \infty \Longrightarrow \inf_z \int_{\R^n}(\tau_z f)^\square = 0.$$
If $f$ is log-concave, the converse is also true. 
\end{fact}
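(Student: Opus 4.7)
The plan is to follow the two-implication structure of Fact~\ref{final:f}, but exploit the fact that polarity already controls $f$ pointwise by an exponential, so there is no need for the $\sinh$/Hölder step that was required in the $p>0$ case. The key reformulation is that, by \eqref{eq:trans_square},
$$\int_{\R^n}(\tau_z f)^\square(x)\,dx \;=\; \int_{\R^n} e^{-x\cdot z}f^\square(x)\,dx \;=\; L(f^\square)(-z),$$
so the quantity whose infimum we are studying is just $L(f^\square)$ up to reflection, and Proposition~\ref{prop:ess} applies directly.

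For the first implication, I will prove the contrapositive: if $\inf_z L(f^\square)(z)>0$, then $\int f<\infty$. By Proposition~\ref{prop:ess}, assertion (i) forces $0\in\inte(\supp(f^\square))$, so there exists $r>0$ with $[-r,r]^n\subset \inte(\supp(f^\square))$. Since $f^\square$ is log-concave and strictly positive, hence continuous, on the interior of its support, I can pick $c>0$ with $f^\square(x)\ge c$ for every $x\in[-r,r]^n$. Unwinding the essential infimum in \eqref{eq:ess_polar}, for each such $x$ there is a null set $N_x$ outside of which $f(y)\le e^{-x\cdot y}/c$. The $x$-dependence of $N_x$ is the one real subtlety: I will fix a countable dense subset $D\subset[-r,r]^n$, let $N=\bigcup_{x\in D}N_x$ (still null), and get, for $y\notin N$ and all $x\in D$,
$$f(y)\;\le\;\frac{e^{-x\cdot y}}{c}.$$
Taking the infimum over $x\in D$, whose closure is $[-r,r]^n$, yields $f(y)\le c^{-1}e^{-r\|y\|_1}$ almost everywhere, which is integrable; this proves the first implication.

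For the converse, assuming $f$ is log-concave, I may also assume $f\not\equiv 0$. From the hypothesis $\int f<\infty$, Fact~\ref{f:integrable} gives constants $a,b>0$ with $f(y)\le a\,e^{-b|y|}$ almost everywhere. Then, for any $x$ with $|x|<b$,
$$f^\square(x)\;=\;\essinf_{y\in\R^n}\frac{e^{-x\cdot y}}{f(y)}\;\ge\;\frac{1}{a}\essinf_{y\in\R^n}e^{(b-|x|)|y|}\;\ge\;\frac{1}{a}\;>\;0,$$
so $\{|x|<b\}\subset \supp(f^\square)$ and in particular $0\in\inte(\supp(f^\square))$. Proposition~\ref{prop:ess}, via the implication (ii)$\Rightarrow$(i), then yields $\inf_z L(f^\square)(z)>0$, i.e.\ $\inf_z\int(\tau_z f)^\square>0$, as required. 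The main obstacle in the argument is the essinf trap in the first implication — namely that the exceptional set in \eqref{eq:ess_polar} depends on $x$; the countable-dense-set device handles it cleanly and is the only non-bookkeeping idea in the proof.
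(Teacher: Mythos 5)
Your proof is correct and follows essentially the same route as the paper: the paper also proves the first implication by contrapositive via Proposition~\ref{prop:ess} (reducing to $0\in\inte(\supp f^\square)$) and then invokes the integrability bound~\eqref{eq:intfsquare}, whose proof in the paper uses exactly the countable-dense-set trick you rederive inline to handle the $x$-dependence of the null set in the essential infimum. Your converse argument is likewise the same as the paper's (exponential decay from Fact~\ref{f:integrable}, giving $f^\square>0$ near the origin, then Proposition~\ref{prop:ess}); the only thing missing is the paper's one-line remark that the case $f\equiv 0$ is trivial, which your appeal to Proposition~\ref{prop:ess} in the contrapositive quietly presupposes.
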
 
\begin{proof}
If $f\equiv 0$ both implications are trivial, so we assume $f\not\equiv 0$. 

We will prove the contrapositive. By Proposition~\ref{prop:ess}, the function $f^\square$ contains the origin in the interior of its support. Then~\eqref{eq:intfsquare} with $x_0=0$ gives the desired conclusion $\int f<\infty$. Let us mention that one can alternatively  take the limit as $p\to0^+$ directly in Fact~\ref{final:f}, using Fact~\ref{fact:limit_integrals}, Proposition~\ref{prop:exist}, and Fact~\ref{f:supp_converge}.

Conversely, assume that $f$ is log-concave, $f\not\equiv 0$ and that $\intn f <\infty$. This means from \eqref{eq:majo_log_concave} that $f(x)\le a e^{-b |x|}$ for some constants $a,b>0$. This implies that $f^\circ$ and thus $f^\square$ is strictly positive in a neighborhood of the origin, and therefore, by Proposition~\ref{prop:ess}, the infimum of $Lf^\square$ is strictly positive. 
\end{proof}

%%%%%%%%%%%%%%%%%%%%%%%%%%%%%%%%%%%%%%%%%%%%%%%%%%%%%%%%%%%%%%%%%%%%%%%%%%

\subsection{Continuity of the infimum and of the Laplace-Santal\'o point}

As we will have to perform several kind of approximations, we are led to investigate (weak) continuity properties for 
$$f \mapsto\inf_z L(\lap(f))(z)$$
and for $f \mapsto \sant(f)$. 

With this framework, we have the following useful result, which concerns $g\mapsto \inf Lg$ and will later be applied to $g=\lap(f)$,  along sequences of log-concave functions.

\begin{prop}\label{prop:cont_santalo}
    Let $g_1, g_2, \ldots g_\infty:\R^n \to [0,\infty)$ be  log-concave functions, all $\not\equiv 0$, such that, for almost all  $x\in \R^n$,
    $$g_k(x) \longrightarrow g_\infty(x).$$
    This implies that $Lg_k(z) \longrightarrow Lg_\infty(z)$ in $\R\cup\{\infty\}$ at every $z\in \R^n$.
   Moreover, if we assume the following two properties of non-degeneracy: the convex functions $g_k$, $k\in \N\cup\{\infty\}$, are proper, and
   $$\inf_z L g_k(z) >0, \qquad \forall k\in \N\cup\{\infty\},$$
    then we have
    $$ \inf_z Lg_k(z) \longrightarrow 
    \inf_z Lg_\infty(z). $$
    Furthermore, for each $k$, there exists a unique point $z(g_k)$ where the infimum $\inf_z L g_k(z)$ is attained and 
    $$z(g_k) \longrightarrow z(g_\infty).$$    
\end{prop}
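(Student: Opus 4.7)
The pointwise convergence $Lg_k(z)\to Lg_\infty(z)$ for every $z\in \R^n$ follows directly from Fact~\ref{fact:limit_integrals} applied, at each fixed $z$, to the log-concave functions $y\mapsto g_k(y)\, e^{z\cdot y}$, which converge almost everywhere to $y\mapsto g_\infty(y)\, e^{z\cdot y}$ (a function that is $\not\equiv 0$ since $g_\infty\not\equiv 0$).

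For the second half, the non-degeneracy hypothesis $\inf Lg_k>0$ combined with Fact~\ref{fact:mf} gives that $0\in \inte(\supp g_k)$ for each $k\in \N\cup\{\infty\}$. Hence $Lg_k$ attains its minimum at a unique point $z(g_k)\in \inte\domain Lg_k$ and $Lg_k\to +\infty$ at the boundary of $\domain Lg_k$. Moreover, the half-space argument inside the proof of Fact~\ref{fact:mf} supplies an exponential lower bound $Lg_\infty(z)\geq c\, e^{\eps |z|}$ for some $c,\eps>0$, so every sublevel set of $Lg_\infty$ is a compact subset of $\inte\domain Lg_\infty$.

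Set $M:=\inf Lg_\infty>0$, $K:=\{Lg_\infty\leq 2M\}$, and choose a compact $K'\subset \inte\domain Lg_\infty$ containing $K$ in its interior. By Fact~\ref{f:uniform_eventually}, $Lg_k\to Lg_\infty$ uniformly on $K'$, so for $k$ large
\[
Lg_k(z(g_\infty))\leq \tfrac{5M}{4},\qquad Lg_k\geq \tfrac{7M}{4}\ \text{on } \partial K.
\]
The first bound already forces $\inf Lg_k\leq 5M/4$. If $z(g_k)\notin K$, the segment $[z(g_\infty),z(g_k)]$ (which lies in the convex set $\domain Lg_k$) must cross $\partial K$ at a point $y$; convexity of $Lg_k$ then gives $Lg_k(y)\leq \max\{Lg_k(z(g_\infty)),Lg_k(z(g_k))\}\leq 5M/4$, contradicting the second bound. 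So $z(g_k)\in K$ eventually.

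Hence $(z(g_k))_k$ is bounded and stays in the compact set $K\subset \inte\domain Lg_\infty$. Any subsequential limit $z^{**}\in K$ again lies in $\inte\domain Lg_\infty$, so uniform convergence of $Lg_k$ on a compact neighborhood of $z^{**}$ (Fact~\ref{f:uniform_eventually}) yields $Lg_\infty(z^{**})=\lim_j Lg_{k_j}(z(g_{k_j}))\leq \lim_j Lg_{k_j}(z(g_\infty))=M$. Uniqueness of the minimizer of $Lg_\infty$ (Fact~\ref{fact:mf}) forces $z^{**}=z(g_\infty)$, so the full sequence converges: $z(g_k)\to z(g_\infty)$ and $\inf Lg_k=Lg_k(z(g_k))\to Lg_\infty(z(g_\infty))=\inf Lg_\infty$. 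The main obstacle is precisely the penultimate step: without a \emph{uniform} coercivity assumption on $\{Lg_k\}$, the minimizers $z(g_k)$ could \emph{a priori} escape to infinity or drift toward the boundary of $\domain Lg_\infty$; the sublevel-set containment argument above, powered by the local uniform convergence of Fact~\ref{f:uniform_eventually}, is the heart of the proof.
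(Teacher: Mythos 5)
Your proof is correct, and it relies on the same key ingredients as the paper's (Fact~\ref{fact:limit_integrals} for pointwise convergence, Fact~\ref{f:uniform_eventually} for local uniform convergence, Fact~\ref{fact:mf} for existence and uniqueness of the minimizer). The strategy is the same at a high level — use convexity and local uniform convergence to trap the minimizers $z(g_k)$ in a fixed compact neighborhood of $z(g_\infty)$, then pass to the limit — but the implementation differs. You work with the sublevel set $K=\{Lg_\infty\le 2M\}$ and invoke the exponential coercivity $Lg_\infty(z)\ge c\,e^{\eps|z|}$ (extracted from the proof of Fact~\ref{fact:mf}) to guarantee $K$ is a compact subset of the open domain; you then prevent escape of $z(g_k)$ by a convexity contradiction at $\partial K$, and finish by subsequential compactness plus uniqueness of the minimizer. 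The paper instead fixes a small ball $B(z(g_\infty),\alpha)$ in the domain, shows via a radial convexity estimate that $Lg_k\ge Lg_k(z(g_\infty))+r/2$ outside the ball once $\|Lg_k-Lg_\infty\|_{L^\infty(B)}\le r/4$, and then gets both the convergence of minimizers and a direct two-sided squeeze
\[
Lg_k(z(g_k))-Lg_\infty(z(g_k))\le Lg_k(z(g_k))-Lg_\infty(z(g_\infty))\le Lg_k(z(g_\infty))-Lg_\infty(z(g_\infty))
\]
in one stroke, without ever needing to name the coercivity explicitly or appeal to a subsequence argument. Your version is closer to the standard epi-convergence template; the paper's is leaner and more self-contained. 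One small point worth flagging: when you invoke Fact~\ref{fact:mf}, you are implicitly using the hypothesis that $\domain(Lg_k)\neq\emptyset$ (the ``proper'' assumption in the statement), since $\inf Lg_k>0$ alone would be vacuously satisfied if $Lg_k\equiv\infty$; the paper makes this explicit before starting the argument.
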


The condition that the functions are proper will always be verified in our applications, where $g_k $ is some $\lap(f)$ with $f\not\equiv 0$, in virtue of Proposition~\ref{prop:exist}. The condition on the infimum being non zero will require more care. 

The above proposition follows from classical results in the theory of convex optimization (see, for example, the book by Rockafellar and Wets \cite{RW97}). These results are often stated using the more broad framework of \textit{epi-convergence}, and the reader may find the proposition as written here to not be stated so directly. Therefore, we provide a self-contained proof, without epi-convergence, for the reader's convenience. 

\begin{proof}
For ease of presentation, we set $g=g_\infty$. 
% Notice that, for each fixed $x\in\R^n$, the function $y\mapsto g_k(y)\, e^{x\cdot y}$ is a $\log$-concave function in $y$ that converges pointwise $a.e.$ to the $\log$-concave function $y\mapsto g(y)\, e^{x\cdot y}$. Thus, from
We recall from fact Fact~\ref{fact:limit_integrals} that $Lg_k(x) \to Lg(x)$ 
% $$Lg_k(x)=\intn g_k(y)e^{x\cdot y}dy\longrightarrow \intn g(y)e^{x\cdot y}dy=Lg(x),$$ 
for every $x\in\R^n$.

  For each $k$, $Lg_k:\R^n\to \R\cup\{\infty\}$ is a lower-semi-continuous, strictly convex function, and by hypothesis, $\domain(Lg_k)\neq \emptyset$. Furthermore, the assumption that the infimum is not $0$ yields that the infimum of $Lg_k$ is obtained at a unique point, that we denote by $z(g_k)$, on its domain, which is open,  via Fact~\ref{fact:mf}. We break the remaining proof into two steps. First, we show that $\lim_{k\to\infty} z(g_k) =z(g)$. Finally, we show that
\begin{equation}\lim_{k\to\infty}\inf_z Lg_k(z) = \inf_z Lg(z).
    \label{eq:lim_of_inf}
    \end{equation}   
By Fact~\ref{f:uniform_eventually}, there exists a closed ball $B(z(g),\alpha)\subset \domain(Lg)$, $\alpha>0$, on which the convergence of $Lg_k$ to $Lg$ is uniform.  Recall domains are open here.  We parameterize the boundary of $B(z(g),\alpha)$ as $z(g)+\alpha\theta$ for $\theta\in\s^{n-1}$. We have that $Lg(z(g)+\alpha\theta)>Lg(z(g))$. 
Since $Lg$ is continuous on $B(z(g),\alpha)$, it follows that 
\[
r:=\inf_{\|x-z(g)\|=\alpha}(Lg(x)-Lg(z(g)))=\inf_{\theta\in \s^{n-1}}(Lg(z(g)+\alpha\theta)-Lg(z(g)))>0.
\]
Since the convergence is uniform on $B(z(g),\alpha)$, there exists $J$ such that for every $k\ge J$ and every $x\in B(z(g),\alpha)$ one has $|Lg_k(x)-Lg(x)|\le r/4$. Thus for every $\theta\in \s^{n-1}$ and $k\ge J$, one has 
\[
r_k:=\inf_{\theta\in \s^{n-1}}Lg_k(z(g)+\alpha\theta)-Lg_k(z(g))\ge \inf_{\theta\in \s^{n-1}}Lg(z(g)+\alpha\theta)-Lg(z(g))-\frac{r}{2}= \frac{r}{2}.
\]
By convexity, the map $t\mapsto \frac{Lg_k(z(g)+t\alpha\theta)-Lg_k(z(g))}{t}$ is non decreasing. Thus, for every $t\ge1$ one has 
\[
Lg_k(z(g)+t\alpha\theta)-Lg_k(z(g))\ge t(Lg_k(z(g)+\alpha\theta)-Lg_k(z(g)))\ge tr_k\ge \frac{tr}{2}.
\]
This implies that for any $x\notin B(z(g),\alpha)$, $Lg_k(x)\ge Lg_k(z(g))+r/2$, thus $z(g_k)\in B(z(g),\alpha)$ for every $k\ge J$. We have thus proved that, for any $\alpha$ such that $B(z(g),\alpha)\subset \domain(Lg)$, there exists $J$ such that, for every $k\ge J$, one has $\|z(g_k)-z(g)\|\le \alpha$. This proves that the sequence $\{z(g_k)\}_k$ converges to $z(g)$. 
    
Finally, we prove \eqref{eq:lim_of_inf}. For $k\ge J$, from the previous step of the proof, we have $z(g_k)\in B(z(g),\alpha)$; using that $Lg_k$ is minimal at $z(g_k)$ and $Lg$ is minimal at $z(g)$, we get
\begin{align*}
     Lg_k(z(g_k)) - Lg(z(g_k))\leq Lg_k(z(g_k))-Lg(z(g)) \leq Lg_k(z(g)) - Lg(z(g)) .
\end{align*}
It follows that $|Lg_k(z(g_k))-Lg(z(g))|\le\sup_{x\in B(z(g), \alpha)}|Lg_k(x)-Lg(x)|$, which converges to $0$ when $k$ tends to $+\infty$ by uniform convergence.
\end{proof}

\section{Semi-groups}
\label{sec:semi}

This section is devoted to the proof of the following result, which contains our main Theorem~\ref{t:main_2} in the case where  the function is bounded and compactly supported.

\begin{thm}
\label{t:main}
Let $f$ be a non-negative function bounded and compactly supported with $f \not \equiv 0$. 
Let $f_t$ be its evolution along the Fokker-Planck or the heat semigroup. 
For $p\in (0,1)$, define the function $Q(t,z)$, for $t>0$ and $z\in \R^n$, by
$$Q(t,z) := \log \intn \lap(\tau_z (f_t)) = \log \intn \lap(f_t)(x)\, e^{qz\cdot x} \, dx= \log L\lap(f_t)(qz).$$
Then it holds that
\begin{equation} \label{eq:mainineq}
\partial_t Q + \frac12\cdot\frac{p}{-q} |\nabla_z Q|^2 \ge 0, \qquad {\rm on }\ (0,\infty)\times \R^n .    
\end{equation}
As a consequence, the function
\begin{equation}\label{eq:monotone}
\alpha(t):= \inf L\lap(f_t)= \inf_z  \intn \lap(\tau_z (f_t))(x)\, dx    = \int_{\R^n}\lap(\tau_{\sant(f_t)}f_t)(x)\, dx \end{equation}
is increasing in $t>0$ and 
\begin{equation}\label{eq:bound_reg}
M_p(f) \le M_p(f_t) \le M_p(\gamma_1)=  \lim_{t\to \infty} M_p (f_t).
\end{equation}
\end{thm}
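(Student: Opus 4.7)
The plan is to prove the pointwise differential inequality~\eqref{eq:mainineq} --- noting that $p/(-q)=1-p$, so it reads $\partial_t Q + \tfrac{1-p}{2}|\nabla_z Q|^2 \geq 0$ --- and then deduce the monotonicity~\eqref{eq:monotone} by an envelope argument; the upper bound in~\eqref{eq:bound_reg} will follow from the long-time asymptotics of the semi-group together with the scaling invariance~\eqref{eq:inv} of $M_p$.

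\textbf{Computing $\partial_t Q$.} For $f$ bounded and compactly supported, $f_t$ is smooth and strictly positive with Gaussian decay for any $t>0$, so all Laplace transforms converge; moreover, by Proposition~\ref{prop:exist}, $\sant(f_t)$ exists and is unique. Writing $h_t:=f_t^{1/p}$, the equation $\partial_t f_t=\tfrac12\Delta f_t$ becomes
$$\partial_t h_t = \tfrac12\Delta h_t + \tfrac{p-1}{2}\,|\nabla h_t|^2/h_t.$$
Introduce the tilted probability measures $d\mu_x^t(y)\propto h_t(y)e^{x\cdot y}dy$ and $d\nu_z^t(x)\propto\lap(f_t)(x)e^{qz\cdot x}dx$. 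Integration by parts in $y$ yields two key identities: $\int\Delta h_t(y)\,e^{x\cdot y}dy=|x|^2 Lh_t(x)$ and $\int\nabla\log h_t\,d\mu_x^t=-x$ (the latter from $\int\nabla h_t\,e^{x\cdot y}dy=-x\,Lh_t(x)$). Differentiating $Q$ under the integral sign and using $q(p-1)=p$,
$$\partial_t Q=\frac{q}{2}\int|x|^2\,d\nu_z^t+\frac{p}{2}\int\!\int|\nabla\log h_t(y)|^2\,d\mu_x^t(y)\,d\nu_z^t(x),\qquad \nabla_z Q=q\int x\,d\nu_z^t.$$

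\textbf{The key inequality (main obstacle).} Using $\int\nabla\log h_t\,d\mu_x^t=-x$, decompose $\int|\nabla\log h_t|^2 d\mu_x^t=|x|^2+V(x)$ with $V(x):=\mathrm{tr}\,\mathrm{Var}_{\mu_x^t}(\nabla\log h_t)$. A short computation using $p+q=pq$ and $|q|=p/(1-p)$ reduces~\eqref{eq:mainineq} to the Poincar\'e-type lower bound
$$\int V(x)\,d\nu_z^t(x)\ \geq\ |q|\,\mathrm{tr}\,\mathrm{Var}_{\nu_z^t}(X),$$
which is the crux of the argument. I would establish it by chaining two classical tools. Setting $u:=\log Lh_t$, one has $\nabla^2 u(x)=\mathrm{Var}_{\mu_x^t}(Y)$. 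First, the matrix Cram\'er-Rao inequality applied to $\mu_x^t$: its score $\nabla\log\mu_x^t=\nabla\log h_t+x$ has zero mean, so the Fisher information of $\mu_x^t$ coincides with $\mathrm{Var}_{\mu_x^t}(\nabla\log h_t)$, and Cram\'er-Rao gives $\mathrm{Var}_{\mu_x^t}(\nabla\log h_t)\succeq(\nabla^2 u(x))^{-1}$. Second, Brascamp-Lieb applied to the log-concave measure $\nu_z^t$, whose potential $|q|(u+z\cdot x)$ has Hessian $|q|\nabla^2 u$: taking $\psi(x)=x$ gives $\mathrm{Var}_{\nu_z^t}(X)\preceq\tfrac{1}{|q|}\mathbb{E}_{\nu_z^t}[(\nabla^2 u)^{-1}]$. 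Taking traces and chaining the two,
$$|q|\,\mathrm{tr}\,\mathrm{Var}_{\nu_z^t}(X)\ \leq\ \mathrm{tr}\,\mathbb{E}_{\nu_z^t}[(\nabla^2 u)^{-1}]\ \leq\ \int V\,d\nu_z^t.$$
Both tools apply: $\nu_z^t$ is always log-concave since $\lap(f_t)$ is, and Cram\'er-Rao needs only smoothness and square-integrability of the score (not log-concavity of $h_t$), both ensured by the heat-flow regularization.

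\textbf{Monotonicity and asymptotics.} Given~\eqref{eq:mainineq}, evaluate at $z^*(t):=\sant(f_t)$. Since $Q(t,\cdot)$ is smooth and strictly convex with $\nabla_z Q(t,z^*(t))=0$, the envelope theorem yields $\tfrac{d}{dt}\log\alpha(t)=\partial_t Q(t,z^*(t))\geq 0$ for $t>0$, proving~\eqref{eq:monotone}; monotonicity extends to $t=0$ by right-continuity of $t\mapsto M_p(f_t)$. Since $\int f_t$ is conserved and $-p/q>0$, the quantity $M_p(f_t)=p^{-np/q}(\int f)\alpha(t)^{-p/q}$ is non-decreasing. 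For the upper bound: under Fokker-Planck, $f_t\to c\gamma_1$ pointwise as $t\to\infty$ with $c=(2\pi)^{-n/2}\int f$; combining the continuity results of Section~\ref{sec:laplace} (Proposition~\ref{prop:cont_santalo} and Fact~\ref{fact:limit_integrals}) with the invariance $M_p(c\gamma_1)=M_p(\gamma_1)$ from~\eqref{eq:inv}, we conclude $M_p(f_t)\to M_p(\gamma_1)$, giving~\eqref{eq:bound_reg}. The heat-flow case is handled similarly after rescaling by $\sqrt t$.
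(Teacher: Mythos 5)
Your proof is correct and follows essentially the same approach as the paper: differentiate $Q$ using the heat PDE, and chain the Cram\'er--Rao inequality on the tilted measures $\mu_x^t$ with the Brascamp--Lieb inequality on the outer log-concave measure $\nu_z^t$, then conclude monotonicity of $\alpha$ via the envelope argument at $z=\sant(f_t)$ and the asymptotics via Proposition~\ref{prop:cont_santalo}. The only cosmetic difference is that you expand the Fisher information $I(h_t(y)e^{x\cdot y})$ into the trace-variance $V(x)=\mathrm{tr}\,\mathrm{Var}_{\mu_x^t}(\nabla\log h_t)$ and phrase the key step as a single Poincar\'e-type bound $\int V\,d\nu_z^t\ge |q|\,\mathrm{tr}\,\mathrm{Var}_{\nu_z^t}(X)$, whereas the paper keeps $I$ compact and lets the two ${\rm Tr}[(\nabla^2\log H_t)^{-1}]$ terms cancel directly using $p-1=p/q$; these are the same computation.
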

The $\frac12$ in~\eqref{eq:mainineq} comes from our normalization of Heat semi-groups with $\frac12 \Delta$ as generator, as we will see below. Although we will not use it here, it is interesting to note, for future investigations, that property~\eqref{eq:mainineq} points to Hamilton-Jacobi equations. 

 We first recall the main tools that will be used: the Fokker-Planck and the heat semi-groups, together with the variance Brascamp-Lieb inequality for log-concave densities, which plays a crucial role. Then, we will move to the proofs of Theorem~\ref{t:main}; we split this into two parts. First, we prove the inequality~\eqref{eq:mainineq}, which is really the core of the argument and is inspired by~\cite{NT24}. Second, we derive~\eqref{eq:monotone} and \eqref{eq:bound_reg}.

\subsection{Fokker-Planck or heat semi-groups}
For a (nonnegative) integrable function $f$, we define its Fokker-Planck flow as $P_0 f = f$, and, for $t>0$ by
\begin{eqnarray}
P_t f(x)&=&e^{nt/2}\int_{\R^n}f(y)e^\frac{-|e^{t/2}x-y|^2}{2(e^{t}-1)}\frac{dy}{(2\pi(e^{t}-1))^\frac{n}{2}} \notag \\
&= &
\left(\int_{\R^n} f(y) e^{\frac{e^{t/2}}{e^{t}-1} x\cdot y - \frac1{2(e^{t}-1)}|y|^2} \, dy\right)\, \frac{e^{-\frac1{1-e^{-t}} |x|^2/2}}{(2\pi(1-e^{-t}))^\frac{n}{2}} . \label{eq:FP}   
\end{eqnarray}
Under suitable integrability assumptions, it verifies the equation $\partial_t P_tf = \mathcal{D}^\star P_tf$, where $$\mathcal{D}^\star f= \frac12\big(\Delta f + {\rm div}_x (xf)\big).$$ 
It is well-known that $\mathcal{D}^\star$ can be seen as  the adjoint of $\mathcal{D}$ from the Ornstein-Uhlenbeck operator~\eqref{eq:orn}. 
Under mild assumptions (satisfied below), we have $P_t f(x)\to f(x)$ as $t\to 0^+$ and $P_tf(x) \to (\int_{\R^n} f) \frac{e^{-|x|^2/2}}{(2\pi)^{n/2}} $ as $t\to \infty$. 

It is standard to renormalize the Fokker-Planck flow into the heat flow, and vice-versa. Let us consider $E_t f$ defined, for $t>0$, by
\begin{equation}
\label{eq:convert}
E_t f(x) = (1+t)^{-n/2} P_{\log(1 + t)}f\big( (1+t)^{-1/2} x\big).
\end{equation}
Then, it is readily checked that $E_t f$ has the following integral representation
\begin{equation}
\label{eq:heat_eq}
E_tf(y)=\frac{1}{(2\pi t)^{n/2}}(f\ast \gamma_{t})(y)=\intn f(u) \, e^{-|y-u|^2/(2t)}\, \frac{du}{(2\pi t)^{n/2}}, \qquad \forall y\in \R^n,
\end{equation}
and, under suitable integrability assumptions,  follows the heat equation
$$\partial_t E_t f (x)= \frac12 \Delta_x E_t f (x).$$
%has the semi-group property, 
What is more surprising is that this renormalization scales perfectly with respect to the definition of $\lap$. Indeed, by change of variables, and using that $p$ and $q$ are conjugate, we see that, for any fixed $z\in \R^n$,
$$\intn \lap (\tau_z(E_tf))  = \intn \lap (\tau_{(1+t)^{-1/2} z} (P_{\log(1 + t)}f)).$$ 
In particular,
\begin{equation}\label{eq:eqFPh}
\inf_z \int \lap (\tau_z(E_t f))
= \inf_z \int \lap (\tau_z(P_{\log(1 + t)}f)).      
\end{equation}
And also, if we define
$$Q(t,z)= \log \intn \lap (\tau_z(E_t f)) \quad {\rm and}\quad \widetilde Q(t,z)= \log \intn \lap (\tau_z(P_tf)),$$
then we have $Q(t,z)= \widetilde Q(\log(1+t) , (1+t)^{-1/2} z)$  and therefore, for any constant $c$
$$(\partial_t Q + c|\nabla Q |^2) (t,z) = \frac1{1+t} ( \partial_t \widetilde Q + c|\nabla \widetilde Q |^2) (\log(1+t), (1+t)^{-1/2} z).$$
Consequently, proving~\eqref{eq:mainineq} along the Fokker-Planck or heat semi-group is totally equivalent. 

%%%%%
\subsection{Preliminary Inequalities}
We will need some well-known inequalities to prove our main results. Assume that we are given a $C^2$ smooth positive integrable function $h$. We denote by $\mu_h$ the corresponding probability measure,
$$d\mu_h(x)=\frac{h(x)}{\int_{\R^n} h}\, dx.$$

The first, essential, tool is the variance Brascamp-Lieb inequality, proven in \cite{BL76}.  We recall that the variance of a function $g$  with respect to a probability measure $\mu$ is given by
$$\mathrm{Var}_\mu g:=\int_{\R^n}|g|^2d\mu(x) - \left(\int_{\R^n}gd\mu(x)\right)^2.$$

\begin{lem}
Let $h$ be a strictly log-concave function. Then, for any locally Lipschitz $g\in L^2(\R^n, \mu)$, one has
\begin{equation}
\mathrm{Var}_{\mu_h} g \leq \int_{\R^n}\left( \nabla g\cdot (\nabla^2 (-\log h))^{-1}\nabla g \right) d\mu_h(x).
\label{eq:bl}
\end{equation}
\end{lem}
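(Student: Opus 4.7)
The plan is to prove this via the standard $L^2$/semi-group method, which is the quickest modern route to the Brascamp-Lieb variance inequality. Set $\psi=-\log h$, which is strictly convex, so $\nabla^2\psi$ is pointwise positive definite and invertible. The key object is the symmetric diffusion operator
$$Lu = \Delta u - \nabla\psi\cdot\nabla u,$$
whose invariant measure is exactly $\mu_h$, and which satisfies the integration-by-parts identity $\int f(-Lu)\,d\mu_h=\int\nabla f\cdot\nabla u\,d\mu_h$ (after checking no boundary terms appear, via a suitable truncation argument).

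First, I would deal with a regular setting: assume $g\in C_c^\infty(\R^n)$ and $h$ is smooth and, say, bounded below on compacts by a Gaussian-type tail, then approximate at the end. Solve the Poisson equation $-Lu = g-\bar g$, where $\bar g:=\int g\,d\mu_h$, e.g.\ via $u=\int_0^\infty (T_t g-\bar g)\,dt$ where $(T_t)$ is the semi-group generated by $L$; strict log-concavity of $h$ gives a spectral gap, so $T_tg\to\bar g$ exponentially fast and $u$ is well-defined with appropriate regularity. Then write
\begin{equation*}
\mathrm{Var}_{\mu_h}(g)=\int (g-\bar g)^2\,d\mu_h=\int (g-\bar g)(-Lu)\,d\mu_h=\int \nabla g\cdot \nabla u\,d\mu_h.
\end{equation*}
Now apply Cauchy--Schwarz in the fiberwise metric $\nabla^2\psi$:
\begin{equation*}
\int \nabla g\cdot\nabla u\,d\mu_h\le \Bigl(\int \nabla g\cdot(\nabla^2\psi)^{-1}\nabla g\,d\mu_h\Bigr)^{1/2}\Bigl(\int \nabla u\cdot\nabla^2\psi\,\nabla u\,d\mu_h\Bigr)^{1/2}.
\end{equation*}

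The heart of the argument is the Bochner--Lichnerowicz identity for $L$: a direct computation with integration by parts gives
\begin{equation*}
\int (Lu)^2\,d\mu_h=\int\|\nabla^2 u\|_{HS}^2\,d\mu_h+\int \nabla u\cdot\nabla^2\psi\,\nabla u\,d\mu_h.
\end{equation*}
Dropping the nonnegative Hessian term and using $Lu=\bar g-g$ yields
$$\int \nabla u\cdot\nabla^2\psi\,\nabla u\,d\mu_h\le \int(Lu)^2\,d\mu_h=\mathrm{Var}_{\mu_h}(g).$$
Plugging this back into the Cauchy--Schwarz bound and dividing by $\mathrm{Var}_{\mu_h}(g)^{1/2}$ (the case $\mathrm{Var}=0$ being trivial) gives exactly \eqref{eq:bl}.

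Finally, to upgrade from smooth compactly supported $g$ to locally Lipschitz $g\in L^2(\mu_h)$, I would mollify and truncate: apply the inequality to $g_k=\chi_k(g\ast\rho_{1/k})$ with a standard cutoff $\chi_k$, use Rademacher's theorem to make sense of $\nabla g$ almost everywhere, and pass to the limit using dominated convergence on both sides (the right side is a priori allowed to equal $+\infty$, in which case there is nothing to prove). The genuine technical obstacles are twofold: justifying the integration by parts identities without boundary terms (handled by an exponential decay cutoff exploiting the log-concavity tail bound from Fact~\ref{f:integrable}), and solving and differentiating the Poisson equation with sufficient regularity to plug into Bochner's identity. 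In applications in this paper, $h$ will be smooth and strictly log-concave with Gaussian-type behavior, so these issues are mild.
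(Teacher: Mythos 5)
The paper does not prove this lemma; it cites Brascamp--Lieb \cite{BL76} and moves on. Your argument therefore cannot be compared against a proof in the paper, but it is a correct and well-known route to the inequality: the dual (Poisson-equation) method of H\"ormander/Helffer--Sj\"ostrand, combined with Cauchy--Schwarz in the $\nabla^2\psi$-metric and the Bochner--Lichnerowicz identity $\int (Lu)^2\,d\mu_h=\int\|\nabla^2 u\|_{HS}^2\,d\mu_h+\int\nabla u\cdot\nabla^2\psi\,\nabla u\,d\mu_h$. Brascamp and Lieb's original proof went instead through a one-dimensional case and an induction via Pr\'ekopa--Leindler; your route is the one most authors use today and is arguably more transparent.

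One point deserves to be tightened. You justify solvability of $-Lu=g-\bar g$ by saying that ``strict log-concavity of $h$ gives a spectral gap.'' Pointwise strict positivity of $\nabla^2\psi$ does \emph{not} by itself give a uniform spectral gap (think of $\psi(x)=\sqrt{1+x^2}$, where $\psi''\to 0$ at infinity). What is true is that every log-concave probability measure on $\R^n$ admits a Poincar\'e inequality, but that is a nontrivial theorem (Bobkov, or Kannan--Lov\'asz--Simonovits) and invoking it makes the argument heavier than necessary. The standard fix is to first replace $\psi$ by $\psi_\eps=\psi+\tfrac{\eps}{2}|x|^2$, so that $\nabla^2\psi_\eps\ge \eps I$ and Bakry--\'Emery gives an explicit spectral gap $\ge\eps$; run the whole argument for $\psi_\eps$; then let $\eps\to 0^+$, noting that $(\nabla^2\psi+\eps I)^{-1}\nearrow (\nabla^2\psi)^{-1}$ pointwise so the right-hand side of \eqref{eq:bl} passes to the limit by monotone convergence, while the left-hand side converges by dominated convergence. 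With that regularization in place the proof is complete and self-contained.
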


The second tool is much simpler and relates information and covariance (it is often refer as the Cram\'er-Rao inequality, see, for example, \cite[Equation 20]{ELS20}). Recall that the covariance matrix associated to a probability measure $\mu$ is given by
\begin{equation}
\label{eq:covar}
    \mathrm{cov}(\mu):=\int_{\R^n} z \otimes z d\mu (z)-\left(\int_{\mathbb{R}^n} z d\mu(z)\right) \otimes\left(\int_{\R^n} z d\mu(z)\right).
\end{equation}

\begin{lem}
\label{l:cr} Assume that $\mu_h$ has finite variance, $\intn |z|^2 \, d\mu_h(z) <\infty$ and finite Fisher information, $\intn \frac{|\nabla h|^2}{h} < \infty$.  Then, the following matrix inequality holds:
$$I(\mu_h) \geq \mathrm{cov}(\mu_h)^{-1},$$
where $I(\mu_h):= \frac1{\intn h} \, 
\intn \frac{\nabla h \otimes \nabla h}{h}$ is the (Fisher) information matrix.
\end{lem}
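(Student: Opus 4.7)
The plan is to obtain the inequality as a matrix Cauchy--Schwarz (equivalently, a Schur-complement positivity) applied to the pair consisting of the centered variable $X-\mathbb{E}X$ and the \emph{score} $S:=\nabla\log h(X)$, where $X$ is distributed according to $\mu_h$.

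First, using the assumption that $h$ is $C^2$, positive and integrable, together with the finiteness of $\int |\nabla h|^2/h$ and of $\int |z|^2\, d\mu_h$, I would justify by a standard truncation argument that one may integrate by parts without boundary terms. This yields on the one hand
\[
\int_{\R^n} \nabla h \, dx = 0, \qquad \text{hence } \mathbb{E}_{\mu_h}[S]=0,
\]
and on the other hand, component-wise,
\[
\int_{\R^n} x_i\, \partial_j h(x)\, dx = -\delta_{ij}\int_{\R^n} h, \qquad \text{so} \qquad \mathbb{E}_{\mu_h}\bigl[X\otimes S\bigr]=-I_n.
\]
Since $S$ has mean zero this also gives $\mathbb{E}_{\mu_h}\bigl[(X-\mathbb{E}X)\otimes S\bigr]=-I_n$.

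Second, consider the $\R^{2n}$-valued vector $Y=(X-\mathbb{E}X,\, S)$. Its covariance matrix
\[
\Sigma \;=\; \begin{pmatrix} \mathrm{cov}(\mu_h) & -I_n \\ -I_n & I(\mu_h) \end{pmatrix}
\]
is positive semi-definite, being the expectation of the outer product $Y\otimes Y$. The block $\mathrm{cov}(\mu_h)$ is invertible (it is the covariance matrix of an absolutely continuous probability measure in $\R^n$ with finite second moment, so its null space would consist of directions in which $\mu_h$ is supported on an affine hyperplane, contradicting the positivity of $h$). The Schur complement of $\mathrm{cov}(\mu_h)$ in $\Sigma$ is therefore non-negative, i.e.
\[
I(\mu_h) \,-\, (-I_n)\,\mathrm{cov}(\mu_h)^{-1}(-I_n) \;\geq\; 0,
\]
which is exactly $I(\mu_h)\geq \mathrm{cov}(\mu_h)^{-1}$.

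The main obstacle I anticipate is the rigorous justification of the integration by parts yielding $\int \nabla h=0$ and $\mathbb{E}[X\otimes S]=-I_n$ purely from the stated hypotheses (finite variance and finite Fisher information). The standard way around this is to pick a sequence of radii $R_k\to\infty$ along which $\int_{|x|=R_k}|x||h|\, d\sigma \to 0$, whose existence follows from integrability of $|x|^2 h$; once this is in place, the rest of the argument is purely linear algebra on the $2\times 2$ block matrix $\Sigma$.
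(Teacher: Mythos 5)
Your proof is correct, and it is the standard argument for Cram\'er--Rao. Note, however, that the paper does not prove this lemma at all: it states it and cites a reference (\cite[Equation 20]{ELS20}), so there is no ``paper's proof'' to compare against. What you supply is the textbook Cauchy--Schwarz / Schur-complement proof, which is exactly the right thing. Two small points worth making explicit if this were to be written up: (1) the finiteness of the cross-moment $\mathbb{E}[X\otimes S]$ should be recorded (it follows from Cauchy--Schwarz, $\int |x|\,|\nabla h|\le (\int |x|^2 h)^{1/2}(\int |\nabla h|^2/h)^{1/2}<\infty$), since this is what guarantees $\Sigma$ is a genuine finite PSD matrix; (2) your truncation-radii argument handles the boundary terms for both identities simultaneously, since $\int_{|x|=R_k}|x|\,h\,d\sigma\to 0$ dominates $\int_{|x|=R_k}h\,d\sigma$ for $R_k\ge 1$, so one subsequence suffices. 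The invertibility of $\mathrm{cov}(\mu_h)$ is as you say: $h$ is assumed positive, so $\mu_h$ is not supported on any hyperplane. The Schur-complement step is correct and yields precisely $I(\mu_h)\ge \mathrm{cov}(\mu_h)^{-1}$.
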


%%%%%
\subsection{Proof of~\eqref{eq:mainineq} in Theorem~\ref{t:main}}
Let $\mathcal C$ be the set of smooth functions $h$ on $\R^n$ such that, for some constants $c_0,C_0, c_1,C_1>0$, it holds
\begin{equation}\label{eq:classC}
C_0 \, e^{-c_0 |x|^2} \le g(x) \le C_1\,  e^{-c_1|x|^2} \quad \forall x \in \R^n ,    
\end{equation}
where $g$ is $h$ or any (partial) derivative of $h$ of order $\le 2$. 

Let us fix, as in Theorem~\ref{t:main}, a nonnegative function $f$ that is bounded and compactly supported, with $\intn f>0$. Let us denote here $$f_t:=E_t f,$$ the evolution of $f$ along the heat semi-group.
It is readily checked that, for any $t>0$, the functions $f_t$, $f_t^{1/p}$ and $\lap(f_t)$ belong to the class $\mathcal C$ defined above. Note that both $L(f^{1/p})$ and $L(f_t^{1/p})$ are everywhere finite, which means that the corresponding $\lap$ functionals are non-zero everywhere, and thus, by Proposition~\ref{prop:exist} we have
\begin{equation}
    \inf_z L(\lap(f))(z) >0 \qquad\textrm{and}\qquad  \inf_z L(\lap(f_t))(z) >0,
\label{eq:finiteft}
\end{equation}
and these infimum are attained at a unique point. 
We also have, for fixed $t>0$,  that for any $z$, the function $x\mapsto \lap(f_t)(x) e^{z\cdot x}$ also belongs to $\mathcal C$, and, in particular, 
\begin{equation}
    \domain L(\lap(f_t)) = \R^n.
\label{eq:domaint}
\end{equation}
Since $f_t$ is continuous and strictly positive, we could also invoke Proposition~\ref{prop:finiteLp}.
The function $z \mapsto \log L(\lap(f_t))(z)$ is a smooth convex function on  $\R^n$, having a positive Hessian everywhere.

Recall the definition, on $(0,\infty)\times \R^n$, of
$$Q(t,z)= \log \intn \lap(\tau_z (f_t))(x)\, dx =\log \intn  \lap(f_t)(x) \, e^{q z\cdot x}\, dx .$$
We aim at computing
$$\partial_t  Q (t,z) =\frac{1}{\intn\lap(\tau_z (f_t)) }   \intn \partial_t \big[\lap(f_t)(x) \big]\, e^{q z\cdot x}\, dx.$$
The derivative in time can indeed be moved inside the integral because the constants in~\eqref{eq:classC} for $g=\lap(\tau_z(f_t))$ depend continuously on $t>0$ and hence are locally bounded.
Introduce 
$$h_t := f_t^{1/p} \quad{\rm and} \quad H_t(x) := L(h_t)(x)=\intn h_t(y) \, e^{x\cdot y}\, dy.$$
Since $\lap(\tau_z f_t)(x)=H_t^q(x) e^{q\, z\cdot x}$, we have
\begin{equation}
\begin{split}
\label{eq:H_and_Q}
    \partial_t Q(t,z) &= \frac{1}{\intn H_t^q(x)\,  e^{q\, z\cdot x}\, dx}   \intn  \partial_t [H_t^q(x)] e^{q z\cdot x}\, dx\\
&= \frac{q}{\intn H_t^q(x)\,  e^{q\, z\cdot x}dx}   \intn  H_t(x)^{q-1}\partial_t [H_t(x)] e^{q z\cdot x}\, dx. 
\end{split}
\end{equation}
Thus, we must compute $\partial_t H_t$.
Let us observe that $h_t$ and $H_t$ are smooth, and moreover that $h_t$ and $H_t^q$ belong to the class $\mathcal C$. This suffices to justify the computations below. Recall also, for later reference,  that $\log H_t$ is a smooth strictly convex function on $\R^n$. 

First note that
$$\partial_t h_t = \frac{1}{2}\left(\Delta h_t + (p-1) \frac{|\nabla h_t|^2}{h_t}\right),$$
so 
$$2\, \partial_t H_t (x) = \intn \Delta h_t(y)\,  e^{x\cdot y}\, dy + (p-1) \intn \frac{|\nabla h_t(y)|^2}{h_t(y)}  e^{x\cdot y}\, dy .  $$
For the first term we have, by integration by parts the following relation (a classical property of the Laplace transform), 
$$\intn \Delta h_t(y)\,  e^{x\cdot y}\, dy = |x|^2 H_t(x),$$
and, for the second term,
\begin{eqnarray*}
\intn \frac{|\nabla h_t(y)|^2}{h_t(y)}  e^{x\cdot y}\, dy &=& \intn \frac{|\nabla_y [h_t(y)\,  e^{x\cdot y}]|^2}{h_t(x)\, e^{x\cdot y}}  \, dy  -2 \intn \nabla h_t(y)\cdot x \, e^{x\cdot y}\, dy  - \intn h_t(y) |x|^2 e^{x\cdot y}\, dy \\
& =  & \intn \frac{|\nabla_y [h_t(y)\,  e^{x\cdot y}]|^2}{h_t(x)e^{x\cdot y}}  \, dy +2|x|^2 H_t(x) - |x|^2 H_t(x).
\end{eqnarray*}
Therefore, we have the following nice formula:
\begin{equation}
\label{eq:nice_formula}
2\, \partial_t H_t (x) = p |x|^2 H_t(x) + (p-1)I (h_t(y) e^{x\cdot y}) \, H_t(x), \end{equation}
where 
$$I (h_t(y) e^{x\cdot y}) := \intn \frac{|\nabla_y [h_t(y) e^{x\cdot y}]|^2}{h_t(y)e^{x\cdot y}}  \, \frac{dy}{\intn h_t(u)e^{x\cdot u}\, du} $$
is the Fisher information of the probability density $h_t(y) e^{x\cdot y} \, \frac{dy}{\intn h_t(u)e^{x\cdot u}\, du}$.
To sum up, by introducing for fixed $t>0$ and $z$, the log-concave probability measure $\mu_{t,z}$ on $\R^n$ given by
$$d\mu_{t,z}(x) := H_t^q(x) e^{q\, z\cdot x} \frac{dx}{\intn H_t^q(x) e^{q\, z\cdot x} \, dx} = e^{- (-q)\big(\log H_t(x)+ z\cdot x\big)} \frac{dx}{\intn H_t^q(x) e^{q\, z\cdot x} \, dx },   $$
and inserting \eqref{eq:nice_formula} into \eqref{eq:H_and_Q}, we obtain
\begin{equation}\label{eq:maineq}
\frac2q \, \partial_t Q(t,z) = p \intn |x|^2 \, d\mu_{t,z}(x)  +  (p-1)\intn  I(h_t(y) e^{x\cdot y}) \, d\mu_{t,z}(x) .     
\end{equation}
The idea of regrouping the terms in this way and the forthcoming arguments are taken from the work of Nakamura and Tsuji; it is beautiful and new. 

For the first term in~\eqref{eq:maineq}, we invoke the variance Brascamp-Lieb inequality \eqref{eq:bl} for the (strictly and smooth) log-concave measure $\mu_{t,z}$ and the linear functions $x\mapsto x_i$, $i=1,\ldots, n$: 
$$\intn x_i^2 \, d\mu_{t,z} - \Big(\intn x_i \, d\mu_{t,z} \Big)^2 \le \intn \big( \nabla_x ^2 (-q\log H_t) \big)^{-1} e_i \cdot e_i \, d\mu_{t,z},$$
where $(e_i)$ refers to the canonical basis of $\R^n$. Therefore we find,
$$ \intn |x|^2 \, d\mu_{t,z}(x) \le  \Big|\intn x \, d\mu_{t,z} (x)\Big|^2  +  \left(-\frac 1 q\right)\intn {\rm Tr}[(\nabla_x^2\log H_t)^{-1}] d\mu_{t,z}(x).$$
For the second term, we bound the term inside the integral for $x$ fixed, using the Cram\'er-Rao inequality, Lemma~\ref{l:cr}; the inverse of the covariance matrix is dominated by the information matrix, which implies in particular that
$$I(h_t(y) e^{x\cdot y}) \ge {\rm Tr}\left[{\rm Cov}\left( \frac{h_t(y) e^{x\cdot y}}{\intn h_t(s) e^{x\cdot s}\, ds}\right) ^{-1} \right] =  {\rm Tr}[(\nabla_x^2\log H_t)^{-1}] ,$$
since $\log(H_t)$ is the log-Laplace transform of $h_t$. 
Combining the previous inequalities with \eqref{eq:maineq}, and noting that $p-1<0$ and $p-1=\frac pq$, we find
$$\frac2q \, \partial_t Q(t,z)  \le p \Big|\intn x \, d\mu_{t,z} (x)\Big|^2 = \frac{p}{q^2}  | \nabla_z  Q (t,z)|^2,$$
where the equality follows from the definition of $Q(t,\cdot)$ as the log-Laplace transform at $q z$ of $\lap(f_t)$. 
This ends the proof of~\eqref{eq:mainineq}.

\subsection{Proof of~\eqref{eq:monotone} in  Theorem~\ref{t:main}}
\label{ss:proof_main}

We continue with the same notations and assumptions as in the previous section. As we said, since $\lap(f_t)$ belongs to the class $\mathcal C$, the function $F_t$ defined by
\begin{equation}F_t(z):= Q(t,z)=\log\intn \lap(f_t)(x) \, e^{q\, z\cdot x}\, dx
\label{eq:log_time_laplace}
\end{equation}
is a smooth convex function, with domain $\R^n$, with positive Hessian everywhere, tending to infinity at infinity, and attaining its minimum at a unique point denoted by $\sant(f_t)$ characterized by $\nabla F_t(\sant(f_t))=0$. Combining this with the regularity of $Q$, we immediately see that $\sant(f_t)$ is at least $C^1$-smooth in $t>0$. 
Consider the function
\begin{equation}
\label{eq:alpha_time}
\alpha(t)=F_t(\sant(f_t)) = Q(t,\sant(f_t)).\end{equation}
Since $\nabla F_t (\sant(f_t))=\nabla_z Q(t, \sant(f_t))=0$, we have
\begin{eqnarray*}
\alpha'(t)&=& \partial_t Q (t,\sant(f_t)) + \nabla F_t (\sant(f_t)) \cdot \partial_t \sant(f_t) \\
& = & \partial_t Q(t,\sant(f_t))\\
& \ge & 0,
\end{eqnarray*}
where we used the fact that  $\sant(f_t)$ has the special property that $0 = p \Big|\intn x \, d\mu_{t,z} (x)\Big|^2 = \frac{p}{q^2}  | \nabla_z  Q (t,z)|^2$ at $z=\sant(f_t)$. We therefore have the desired monotonicity for the infimum. 

\subsection{Proof of~\eqref{eq:bound_reg} in Theorem~\ref{t:main}}
We only need to examine the limits when $t\to 0$ and $t\to \infty$.
Keeping the notations of the previous section, that is,
$$f_t:=E_t f \quad \text{and} \quad \alpha(t)=\inf_z \intn \lap(\tau_z(f_t)),$$
we have, for every $t_0>0$,
$$\lim_{t\to0^+} \alpha(t)\le\alpha(t_0)
\le\lim_{t\to \infty}\alpha(t).$$
We must now relate these limits with the corresponding functionals associated with our endpoint functions $f$ and $\gamma_1$. 

Let us consider first the limit as $t\to 0^+$. Since $f\in L^1(\R^n)$, we have that $f_t$ converges to $f$ almost everywhere~\cite[Chapter 3, Theorem 2.1]{Stein}.  Moreover, since $f$ is bounded and compactly supported, we have that for every $0<t<1/10$, say, and every $y\in \R^n$,  $f_t(y) \le c \, e^{-|y|^2}$. So by dominated convergence, we have that $L(f_{t}^{1/p})(x)\to L(f^{1/p}) (x)$ and 
$$\lap(f_{t})(x) \to \lap(f)(x),$$ 
for every $x\in \R^n$, as $t\to 0$. 
Recalling~\eqref{eq:finiteft}, we can apply
 Proposition~\ref{prop:cont_santalo}
and obtain that $$\lim_{t\to 0} \alpha(t)= \inf_z\intn\lap(\tau_z f)=\alpha(0).$$
Note that we also have $\sant(f_{t}) \to \sant(f)
$.

We now move to the limit as $t\to \infty$. Here, it is more natural to work with the Fokker-Planck evolution of 
$f$, given by~\eqref{eq:FP}. We already saw in~\eqref{eq:eqFPh} that this is totally equivalent, up to a change in time in $\alpha$ that do not alter the limit. 
We ask the reader to forgive us for keeping the same notation $f_t$, that is
$$f_t=P_t f.$$
Using that $f$ is bounded and compactly supported, we see from the definition of $f_t$ that $f_t(x)\to c(f) \gamma_1(x) $ when $t\to \infty$, where $c(f)= (\intn f)/(2\pi)^{n/2}$. And we also have, for $t\ge 100$, say, that $f_t(x) \le C\,  e^{-|x|^2/4}$ for all $x\in \R^n$ and some constant $C>0$. Therefore, by dominated convergence, we get that $L(f_t^{1/p})(x)\to c(f)^{1/p} L(\gamma_1^{1/p})(x)$ and
$$\lap(f_t)(x) \to  \lap(c(f) \, \gamma_1) (x),$$
as $t\to \infty$, for all $x\in \R^n$.  As above, Proposition~\ref{prop:cont_santalo} gives that
$$\alpha(t) \to  \inf_z L(\lap(c(f) \, \gamma_1)) (z),$$
 with convergence of the corresponding Laplace-Santal\'o points. 
It is readily checked using~\eqref{eq:limit_fixed} or invoking Proposition~\ref{p:even}, that $\sant(c(f)\gamma_1)=0$. So, we have proven that,  when $t\to \infty$,
$$\alpha(t) \to  \intn \lap(c(f) \gamma_1) = c(f)^{q/p} \intn \lap (\gamma_1) , $$ 
as wanted. The result for $M_p(f_t)$ follows by multiplying $\alpha(t)^{-p/q}$ by 
$$\intn f_t = \intn f = c(f) \intn \gamma_1 \in (0, \infty).$$

%\newpage
\section{Proof of Theorem~\ref{t:main_2}, Theorem~\ref{t:main_sant} and Theorem~\ref{t:main_sant_2}}
\label{sec:proofs}

Here, we prove, by approximation,  our general statements regarding the maximum of the $p$-volume product from Theorem~\ref{t:main}. We then explain how results for the classical volume product are obtained, by letting $p\to 0^+$. 

\subsection{Proof of Theorem~\ref{t:main_2} and Theorem~\ref{t:main_sant}}

We begin with the useful observation that if $\inf L(\lap(f_t))>0$ at some time $t\ge 0$, then  $\inf L(\lap(f_T))>0$ at all times $T\ge t$.

\begin{lem}
\label{c:domains}
    Let $f$ be a non-negative function, $f\not\equiv 0$,  and let $f_t=E_t f$ be its heat flow evolution. 
    %Fix $p\in (0,1)$. 
    Then for $0\le s \le t$,
    \begin{equation*}
    \supp (\lap(f_s)) \subseteq \supp (\lap(f_t)) ,
    \end{equation*}
    and consequently
    $$\inf L\lap(f_t)=0 \Longrightarrow
    \forall s\in [0,t], \ \inf L\lap(f_s)=0.$$
\end{lem}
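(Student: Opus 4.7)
By the semigroup property $f_t = E_{t-s}(f_s)$ of the heat flow, I would first reduce to the case $s=0$, so that the goal becomes $\supp(\lap(f)) \subseteq \supp(\lap(f_t))$ for each fixed $t>0$. Since $q<0$ and $f\not\equiv 0$, one has $L(f^{1/p})(x)\in(0,\infty]$ pointwise, and therefore $\{\lap(f)>0\}=\{L(f^{1/p})<\infty\}=\domain L(f^{1/p})$, with the analogous statement for $f_t$; as supports are the closures of these (convex) positivity sets, it suffices to establish the inclusion $\domain L(f^{1/p}) \subseteq \domain L(f_t^{1/p})$, i.e.\ that $L(f^{1/p})(x)<\infty$ implies $L(f_t^{1/p})(x)<\infty$.

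For this I would prove a pointwise bound $L(f_t^{1/p})(x)\leq C(t,x)\,L(f^{1/p})(x)$ with $C(t,x)$ finite. Starting from the heat formula~\eqref{eq:heat_eq} and completing the square in the exponent $p x\cdot y - |y-u|^2/(2t)$, one obtains the identity
$$f_t(y)\, e^{p x \cdot y} \,=\, e^{tp^2|x|^2/2}\, (E_t g_x)(y - tpx), \qquad \text{where } g_x(u):= f(u)\, e^{p x \cdot u}.$$
Raising to the $1/p$, integrating in $y$ and using translation invariance of Lebesgue measure then rewrites $L(f_t^{1/p})(x)$ as $e^{tp|x|^2/2}\intn (E_t g_x)^{1/p}(y)\,dy$. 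The core of the argument is Jensen's inequality: because $1/p\geq 1$ the function $s\mapsto s^{1/p}$ is convex, and $E_tg(y)$ is an average of $g(u)$ against the Gaussian probability kernel $(2\pi t)^{-n/2}e^{-|y-u|^2/(2t)}du$, so $(E_t g)^{1/p}\leq E_t(g^{1/p})$ pointwise. Applying this to $g = g_x$, noting $g_x^{1/p}(u) = f^{1/p}(u)e^{x\cdot u}$, and using $\int E_t h = \int h$ (by Fubini), gives
$$L(f_t^{1/p})(x) \,\leq\, e^{tp|x|^2/2}\, L(f^{1/p})(x),$$
which yields the desired inclusion of domains, and then of supports by taking closures.

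For the ``consequently'' part, the case $\lap(f_s)\equiv 0$ is trivial since both infima are zero. Otherwise Proposition~\ref{prop:exist} ((i)$\Leftrightarrow$(ii)), applied at both $s$ and $t$, gives $\inf L\lap(f_\tau) > 0 \Leftrightarrow 0\in\inte(\supp(\lap(f_\tau)))$; the support inclusion just proven implies the corresponding inclusion of interiors, so $0\notin\inte(\supp(\lap(f_t)))$ forces $0\notin\inte(\supp(\lap(f_s)))$, i.e.\ $\inf L\lap(f_s)=0$. I do not anticipate a serious obstacle: the only delicate point is carrying the right auxiliary function $g_x=f\,e^{p x\cdot}$ through the completion-of-squares step so that Jensen's inequality cleanly produces a bound on $L(f_t^{1/p})$ in terms of $L(f^{1/p})$, after which everything else is bookkeeping.
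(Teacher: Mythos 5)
Your proof is correct and follows essentially the same route as the paper: reduce to $s=0$ via the semigroup law, use Jensen's inequality for the convex power $s\mapsto s^{1/p}$ against the Gaussian heat kernel together with the Laplace-transform/heat-flow identity to get a pointwise bound $L(f_t^{1/p})(x)\le C(t,x)\,L(f^{1/p})(x)$ with finite $C(t,x)$, and conclude the ``consequently'' part from Proposition~\ref{prop:exist} (i)$\Leftrightarrow$(ii). Your completion-of-squares bookkeeping produces the slightly sharper constant $e^{tp|x|^2/2}$ in place of the paper's $e^{t|x|^2/2}$ (the paper applies Jensen as $(f_t)^{1/p}\le(f^{1/p})_t$ and then Laplace-transforms), but the argument is the same in substance.
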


\begin{proof}Since $f_t = (f_s)_{t-s}$, it suffices to consider the case $s=0<t$. It is readily checked from the definition of the heat semi-group, since $(2\pi t)^{-{n/2}} \intn e^{x\cdot u}\, e^{-|u-y|^2/2t}\, du  = e^{t\frac{|x|^2}{2}}\, e^{x\cdot y} $,  that for any heat flow evolution $f_t = E_t f$,
$$L (f_t) = e^{t\frac{|x|^2}{2}} L f.$$
From Jensen's inequality, we have that $(f_t)^{1/p}\le (f^{1/p})_t$ , from which we derive that
$$L((f_t)^\frac{1}{p})(x) \leq e^{t\frac{|x|^2}{2}}L(f^\frac1p)(x).$$
Therefore, we have
$$\lap(f_t)(x) \ge e^{qt\frac{|x|^2}{2}}\lap (f)(x).$$
Since  $e^{q\frac{t|x|^2}{2}}>0$, the claim on the supports follows. To conclude, we use the equivalence of $(i)$ and $(ii)$ in Proposition~\ref{prop:exist}.
\end{proof}

We can now move to the proof of Theorem~\ref{t:main_2}.

\begin{proof}[Proof of Theorem~\ref{t:main_2}]
 If $f\equiv 0$ then $M_p(f_t)=0$ at all times and the result is trivial. When $\intn f = \infty $, we have $\intn f_t = \infty$ at all times and so, by  Fact~\ref{final:f}, we also have that $M_p(f_t)=0$ at all times.
 Thus, in the sequel, we can assume that 
 $$0<\intn f < \infty.$$
 Therefore, the result for $M_p$ amounts to proving that for any $t>0$,
\begin{equation}
\label{eq:main_ineq}
\inf L \lap(f) \le 
\inf L \lap(f_t), 
\end{equation}
and 
\begin{equation}
\label{eq:main_ineq_infty}
      \inf L \lap(f_t) \le  \Big( (2\pi)^{-n/2} \intn  f \Big)^{q/p} \, \intn \lap (\gamma_1). 
\end{equation}
Indeed, by the semi-group property $f_t = (f_s)_{t-s}$, inequality~\eqref{eq:main_ineq} improves to 
$$\inf L\lap(f_s)\le 
\inf L \lap(f_t), $$
for any $0\le s < t$.

So let us fix $t>0$. 
We consider the following, classical,  bounded compactly supported approximation procedure: for $k\in \N$, set
$$f^{(k)}:= f  \cdot 1_{|f|\le k} \cdot 1_{|x|\le k}.$$
We implicitly take $k$ large enough so that $f^{(k)}\not\equiv 0$.
By Theorem~\ref{t:main}, we have that
\begin{equation}
\inf L \lap(f^{(k)})
\le \inf L \lap((f^{(k)})_t).
\label{eq:approx_ineq}
\end{equation}

\begin{fact}\label{fact:approx}
    Let $f$ be a nonnegative function, $f\not\equiv 0$, and let $s\in [0,\infty)$. If $\inf L \lap(f_s)>0$ then, as $k\to \infty$, we have
    $$\inf L \lap((f^{(k)})_s) \to \inf L \lap(f_s).$$
\end{fact}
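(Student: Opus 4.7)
The plan is to apply Proposition~\ref{prop:cont_santalo} to the sequence of log-concave functions $g_k := \lap((f^{(k)})_s)$ with candidate limit $g_\infty := \lap(f_s)$. Its hypotheses are: pointwise almost everywhere convergence $g_k \to g_\infty$, each $g_k$ (for $k\in\N\cup\{\infty\}$) not identically zero, and $\inf_z L g_k(z) > 0$ for every such $k$. If all three conditions are verified, the proposition's conclusion is exactly the claim $\inf L \lap((f^{(k)})_s) \to \inf L \lap(f_s)$.

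First I would establish the pointwise convergence. The truncations satisfy $f^{(k)} \nearrow f$ pointwise wherever $f$ is finite, hence almost everywhere. When $s = 0$ this gives $(f^{(k)})_s \to f_s$ a.e.\ directly. When $s > 0$, the integral representation~\eqref{eq:heat_eq} combined with monotone convergence yields $(f^{(k)})_s(y) \nearrow f_s(y) \in [0,\infty]$ at every $y$; the hypothesis $\inf L \lap(f_s) > 0$ forces $L(f_s^{1/p})(x_0) < \infty$ for some $x_0$, so $f_s^{1/p}$ (and hence $f_s$) is finite a.e. A further monotone convergence step at the Laplace transform level then yields $L((f^{(k)})_s^{1/p})(x) \nearrow L(f_s^{1/p})(x) \in (0,\infty]$ for every $x$; raising to the negative power $q$ reverses monotonicity and produces $g_k(x) \searrow g_\infty(x)$ pointwise everywhere.

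Next I would verify the two non-degeneracy conditions. Since $f \not\equiv 0$, one has $f^{(k)} \not\equiv 0$ for $k$ sufficiently large, so $g_k \not\equiv 0$; and $g_\infty \not\equiv 0$ is implicit in the hypothesis $\inf L g_\infty > 0$. For the strict positivity of the infima of $L g_k$: each $f^{(k)}$ is bounded and compactly supported, so the reasoning recalled at~\eqref{eq:finiteft} in the proof of Theorem~\ref{t:main} gives $\inf L \lap(f^{(k)}) > 0$, and Lemma~\ref{c:domains} then propagates this to $\inf L \lap((f^{(k)})_s) > 0$ for every $s \ge 0$. The assumption of the fact furnishes the analogous positivity for $g_\infty$, completing the verification.

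I do not anticipate a serious obstacle: the result is a clean application of the continuity machinery of Section~\ref{sec:laplace} to the monotone truncation $f^{(k)} \nearrow f$. The only mildly delicate points are the two cascaded uses of monotone convergence (first at the level of $(f^{(k)})_s \nearrow f_s$, then at the level of $L(f_s^{1/p})$) and the a.e.\ finiteness of $f_s$; both are secured by the standing hypothesis $\inf L \lap(f_s) > 0$, which is what makes the Laplace transform of $f_s^{1/p}$ finite somewhere and therefore $f_s$ itself almost everywhere finite.
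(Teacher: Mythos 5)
Your argument is correct and follows the same overall route as the paper: establish the monotone convergence $\lap((f^{(k)})_s) \searrow \lap(f_s)$ and feed it into Proposition~\ref{prop:cont_santalo}. The only real divergence is in how you verify the non-degeneracy hypothesis $\inf L\lap((f^{(k)})_s) > 0$. You route this through the observation that $f^{(k)}$ is bounded and compactly supported, invoking the argument at~\eqref{eq:finiteft} for $\inf L\lap(f^{(k)})>0$ and then Lemma~\ref{c:domains} to carry it forward in time. The paper gets this for free: once you have $\lap((f^{(k)})_s) \ge \lap(f_s)$ pointwise (which is what the monotone limit $\searrow$ means), you immediately have $L\lap((f^{(k)})_s)(z) \ge L\lap(f_s)(z)$ for every $z$, hence $\inf L\lap((f^{(k)})_s) \ge \inf L\lap(f_s) > 0$. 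This is a one-line consequence of the monotonicity you already established, and requires no appeal to the class $\mathcal C$ or Lemma~\ref{c:domains}. Your detour through a.e.\ finiteness of $f_s$ is also unnecessary: the monotone convergence $L((f^{(k)})_s^{1/p}) \nearrow L(f_s^{1/p})$ holds in $(0,\infty]$ without it, and $\lap(f_s)$ is always $[0,\infty)$-valued since $q<0$. So the proposal is sound, but slightly overbuilt; the paper's positivity argument is cleaner and you should adopt it.
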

\begin{proof}
 We have $f^{(k)}(x)\nearrow f(x)$ at every $x\in \R^n$ as $k\to \infty$, and so, by monotone convergence, we have, 
$(f^{k})_s \nearrow f_s,$ 
and 
$$\lap((f^{(k)})_s) \searrow \lap(f_s).$$
Note that this implies that 
$\inf L\lap((f^{(k)})_s)  \ge \inf L\lap(f_s) >0$.
So, we can apply Proposition~\ref{prop:cont_santalo} to conclude. 
\end{proof}
We continue with the proof of~\eqref{eq:main_ineq}. We can assume that
$$\inf L\lap(f)>0 \quad
\textrm{and}\quad 
\inf L\lap(f_t)>0.$$
Indeed, if $\inf L\lap(f)=0$, there is nothing to prove. If $\inf L\lap(f_t)=0$, it follows from Lemma~\ref{c:domains}
that  $\inf L\lap(f)=0$, and so the inequality is true in this case too.  So, we can apply the Fact~\ref{fact:approx} with $s=0$ and $s=t$,  and conclude by passing  to the limit in~\eqref{eq:approx_ineq}.

There remains to prove~\eqref{eq:main_ineq_infty}. We can again assume that $\inf L\lap(f_t)>0$. We have, from Theorem~\ref{t:main}, that
$$\inf L \lap((f^{(k)})_t)\le \lim_{s\to\infty} L \lap((f^{(k)})_s) = \Big( (2\pi)^{-n/2} \intn f^{(k)}\Big)^{q/p} \intn \lap (\gamma_1).
$$
We obtain~\eqref{eq:main_ineq_infty}, by letting $k\to \infty$, using Fact~\ref{fact:approx} in the left-hand side, and monotone convergence in the right-hand side.  

Let us note, for consistency, that  $\intn f= \intn f_t < \infty$, and so $\Big(\intn  f \Big)^{q/p}>0$, when  $\inf L \lap(f_t)>0$, because of Fact~\ref{final:f}. 
\end{proof}

\subsection{Proof of Theorem~\ref{t:main_sant_2}}

Let $f$ be a nonnegative function with $f\not\equiv 0$.

Suppose $\lap(f)$ has barycenter at zero. This implies, by definition,  that $\lap(f) \not\equiv 0$.  By Proposition~\ref{prop:exist} $(v)$, we know that 
$$\inf_z \intn \lap(\tau_zf) = \intn \lap(f),$$
and the result follows by Theorem~\ref{t:main_sant}. Note for consistency that $\lap(f)(0)>0$, rewrites as $\intn f <\infty$. 

Assume next that $0$ is the barycenter of $f$. If  $\intn \lap(f) =  0$, there is nothing to prove, so we can assume that $\lap(f)\not\equiv 0$. Since the barycenter belongs to the interior of the convex hull of the (essential) support, we have, by Proposition~\ref{prop:finiteLp}, that $L\lap(f)(0)<\infty$. Thus, the log-concave function $\lap(f)$ is integrable. 
Note that, for any fixed vector $s\in \R^n$, we have 
$$\lap(f(x) e^{s\cdot x}) = \tau_{-s/p} \lap(f) ,$$
so we can find $s_0$ such that the barycenter of $\lap(f(x) e^{s_0\cdot x})$ is at the origin: just take $s_0 = p\,  {\rm bar}(\lap(f))$.  Applying Proposition~\ref{prop:exist} and Theorem~\ref{t:main_sant} to $x\mapsto f(x) e^{s_0\cdot x}$, we deduce that
$$\int f(x) e^{s_0\cdot x} \, dx \intn \lap(f) \le M_p(\gamma_1).$$
We conclude with the following usual (in the context of the Blaschke-Santal\'o inequality) observation:
$$\intn f(x)\,  e^{s_0\cdot x} \, dx \ge \intn f(x)\, dx + \intn (s_0\cdot x)\,  f(x)\,dx = \intn f(x)\, dx.$$ 

\subsection{Limits as $p\to 0^+$ and proof of Corollary~\ref{cor:volume_heat}}

In this section, we verify that Theorem~\ref{t:main_sant} implies the same result for the usual volume product by letting $p\to0^+$. 

Via the semi-group property, it again suffices to show that
$$M(f)\leq M(f_t) \leq M(\gamma_1).$$
Without loss of generality, we will use the heat-semi group. 

The case when $\intn f = \infty$ follows from Fact~\ref{f:final_2}, and the case $f\equiv 0$ is trivial. From now we assume that $0<\intn f <\infty$, and so we are reduced to proving that, for $t>0$ fixed, 
\begin{equation}
    \inf L(f^\square) \le \inf L((f_t)^\square) \le (2\pi)^{n} \Big(\intn f\Big)^{-1}. 
\label{eq:square_goal}
\end{equation}
Observe first that, for $t\geq 0,$
$(f_t)^\square(x) \geq e^{-t\frac{|x|^2}{2}}f^\square(x)$ pointwise. Indeed,
\begin{align*}
    f_t(y)&=\intn f(u) \, e^{-|y-u|^2/(2t)}\, \frac{du}{(2\pi t)^{n/2}}
    \\
    &\leq \frac{1}{f^\square(x)} \intn e^{-x\cdot u} \, e^{-|y-u|^2/(2t)}\, \frac{du}{(2\pi t)^{n/2}} = \frac{1}{f^\square(x)} e^{t\frac{|x|^2}{2}}e^{-x\cdot y}.
\end{align*}
Re-arranging and taking the essential infimum over all $y$ yields the result.
More generally, from the semi-group property, this yields $t\geq s \geq 0$,
\begin{equation}
\label{eq:relate_new}
(f_t)^\square(x) \geq e^{-(t-s)\frac{|x|^2}{2}}(f_s)^\square(x).\end{equation} Thus, $0 \in \inte(\supp((f_s)^\square))$ implies $0 \in \inte(\supp((f_t)^\square))$. From Proposition~\ref{prop:ess}, this implies that, if $\inf L ((f_t)^\square) =0$, then $\inf L ((f_s)^\square) =0$ for every $0\leq s\leq t$. 

 Consequently, in order to prove~\eqref{eq:square_goal}, we may suppose that 
 \begin{equation}
 \inf L(f^\square) >0 \quad \textrm{and}\quad
 \inf L((f_t)^\square) >0  .
 \label{eq:square_inf_assumption}
 \end{equation}
 We claim that, for any $s\geq 0$ fixed, if $\inf L((f_s)^\square) >0,$ then $\inf L(\lap(f_s)(\frac{\cdot}{p})) >0$ for $p$ small enough. Indeed, the former is equivalent to $0\in \epsilon B_2^n\subset\inte (\supp ((f_s)^\square))$ for some $\epsilon>0$ via Proposition~\ref{prop:ess}. In particular, $(f_s)^\square$ is bounded away from zero on $\epsilon B_2^n$. Since by Fact~\ref{f:supp_converge},  $\lap(f_s)(\frac{\cdot}{p})$ is a sequence (indexed by $p=1/k$, $k\to \infty$, say) of log-concave functions converging almost-everywhere to a log-concave function, the (local) uniform convergence recalled in Fact~\ref{fact:limit_integrals} yields $0\in \epsilon B_2^n\subset\inte \left(\supp\left(\lap(f_s)\left(\frac{\cdot}{p}\right)\right)\right)$ for $p$ small enough, and the claim then follows from Proposition~\ref{prop:exist}. 
We have from Theorem~\ref{t:main_2} that, for $t>0$,
\begin{align*}\inf L\left(\lap(f)\left(\frac{\cdot}{p}\right)\right) &\leq \inf L\left(\lap(f_t)\left(\frac{\cdot}{p}\right)\right)  
\\
&\le \left([p(1-p)^\frac{1-p}{p}]^{-p\frac{n}{2}} \, (2\pi)^{n(1-p)}\, \Big(\intn f \Big)^{-1} \right)^{-q/p} .\end{align*}
In view of~\eqref{eq:square_inf_assumption} and of the claim of the paragraph above with $s=0$ and $s=t$, we can pass to the limit in $p\to 0^+$, thanks to repeated applications of Proposition~\ref{prop:cont_santalo} and conclude to~\eqref{eq:square_goal}.

%%%%%%%%%%%%%%%%%%%%%%%%%%%%%%%%%%%%%%%%%%%%%%%%%%%
%\newpage

\section{An application and open questions}
\label{sec:applications}

We begin with the derivation of Theorem~\ref{t:main_3} on hypercontractivity, and then briefly discuss other choices of centering along the heat flow.

\subsection{Hypercontractive inequalities}
We denote by $\gamma$ the standard Gaussian probability measure on $\R^n$. Recall that for $f$ nonnegative or in $L^1(\gamma)$ we can define the Ornstein-Uhlenbeck flow of $f$ by
$$U_t (f) (x) =\int_{\R^n}f(e^{-t}x+\sqrt{1-e^{-2t}}\, z)d\gamma(z).$$
Let us mention that, under appropriate assumptions, $f_t= U_t f$ satisfies, 
\begin{equation} \label{eq:orn} \frac{\partial}{\partial t} f_t:= \mathcal{D} f_t, \quad \text{where} \quad 
\mathcal{D} f:=\Delta f -  x\cdot \nabla f.\end{equation}
Nelson's hypercontractivity~\cite{EN73}, which quantifies the regularizing effect of the Ornstein-Uhlenbeck semi-group, states that
\begin{equation}\|U_s(f)\|_{L^{p_2}(\gamma)} \leq \|f\|_{L^{p_1}(\gamma)},
\label{eq:contract}
\end{equation}
when $1<p_1,p_2<\infty$ and $s>0$ satisfy
$\frac{p_2-1}{p_1-1}\leq e^{2s}$.
Borell's reverse hypercontractivity is then the fact \cite{Bor82} that \eqref{eq:contract} reverses when $p_1,p_2\in(-\infty,1)\setminus\{0\}$ satisfy the same relation.

With our indices~\eqref{eq:condpq}, it is readily checked, as observed by Nakamura and Tsuji~\cite{NT24},  that for any nonnegative $f$, we have the pointwise equality,
\begin{equation}
\label{eq:Usequiv1}
%    (U_s f)^q \times \frac{\gamma_1}{(2\pi)^{n/2}} = c_{n,p}\, \lap(f^p \gamma_1).
(U_sf(x))^q\times (2\pi p)^\frac{nq}{2}\gamma_1(x)=\lap(f^p \gamma_1)\left(\frac{x}{\sqrt{p|q|}}\right),
\end{equation}
with  $s=-\frac{1}{2}\log(1-p)$. 
As a consequence, we also have, for some constant $\tilde{c}_{n,p}>0$,
\begin{equation}
\label{eq:Usequiv2}
    \frac{\|f\|_{L^p(\gamma)}}{\|U_s f \|_{L^q(\gamma)}} =  \tilde{c}_{n,p}
\left( \Big( \intn f^p \gamma_1 \Big)\, \Big(\intn \lap(f^p \gamma_1)\Big)^{-p/q} \right)^{1/p}.
\end{equation}

Using~\eqref{eq:Usequiv1} and~\eqref{eq:Usequiv2}, we may reformulate Theorem~\ref{t:main_2} as a partial extension of Borell's reverse hypercontractivity beyond usual time:
% \begin{lem}
% \label{l:main_3}
     if we define $s$ via $p=1-e^{-2s}$ (and so $q=1-e^{2s}$), then, for any a nonnegative function $f$, if either $\int x\, f^p(x)\, d\gamma=0$ or $\int x\,  U_s(f)^q(x) \, d\gamma =0$, we have
$$\|U_s f\|_{L^q (\gamma)} \ge   \|f\|_{L^p(\gamma)}. $$
There is equality when $f^p\gamma_1$ is Gaussian.
%\end{lem}
The statement in  Theorem~\ref{t:main_3}, for $p_2\ge q$ and $p_1\le p$, then follows from Jensen's inequality.

\subsection{Open questions on Santal\'o curves}

Let us fix a nonnegative function $f$. Letting $f_t$ be, say, the Fokker-Planck evolution of $f$, a natural problem is to describe the following set of curves
$$\mathcal S_p(f) =\left\{s:\R^+\to \R^n\; ;\; \lim_{t\to\infty} s(t) = 0\; \textrm{ and } \; t\mapsto \mathcal \int \lap(\tau_{s(t)}f_t)\;  \textrm{ increases}\right\}.$$

It would be interesting to know if, after translating $f$, the single point zero curve belongs to $\mathcal S_p(f)$ or if the curves stay in a compact region that can be described. Actually, we don't even know if for the volume product, i.e. when $p=0$, $t\mapsto \int f_t \int (f_t)^\circ$ increases when $f$ or $f^\circ$ has barycenter $0$, say.  Note also that monotonicity ensures that $\int f_t \int (\tau_{s(t)}f)^\circ\le (2\pi)^n,$ and so $s(t)$ has to belong to the so-called 'Santal\'o region'~\cite{MW98, IW24} of $f_t$.

For instance, in the trivial case where $f(x)=e^{-|x-a|^2/2}$ is a Gaussian centered at $a\in \R^n$, then it is readily checked that $s(t)= e^{-t/2}a$ is the only element of $\mathcal S_p(f)$ for all $p\in[0,1)$. 

One can also look at the subset of curves in $\mathcal S_p(f)$ that make our argument work with fixed $p\in (0,1)$. In the proof of~\eqref{eq:monotone} in  Theorem~\ref{t:main}, given in Section~\ref{ss:proof_main}, we see that establishing the monotonicity in $t$ of $M_p(f_t)$ (assuming $f$ belongs to the class $\mathcal C$ defined in \eqref{eq:classC}) amounted to the fact that
\begin{eqnarray*}
\alpha'(t)&=& \partial_t Q (t,\sant(f_t)) + \nabla_z Q(t, \sant(f_t)) \cdot \partial_t \sant(f_t) \\
&\geq & \frac{p}{2q} |\nabla_z Q(t,\sant(f_t))|^2 + \nabla_z Q(t,\sant(f_t)) \cdot \partial_t \sant(f_t)\\
& = & 0,
\end{eqnarray*}
where $Q(t,z)= \log L(\lap(f_t))(qz)= \log \int \lap(\tau_z (f_t))$ is convex in $z$,  and  $\alpha(t) = Q(t, \sant(f_t)) $.
In particular, our choice of $z=\sant(f_t)$ ensured that
$$|\nabla_z Q(t,\sant(f_t))|^2 = 0 = \nabla_z Q (t,\sant(f_t)) \cdot \partial_t \sant(f_t).$$
But we see that we only need the curve to satisfy
\begin{equation}\label{eq:santcurve}
\frac{p}{2q} |\nabla_z Q(t,s(t))|^2 + \nabla_z Q (t,s(t)) \cdot s'(t)\geq 0,
\end{equation}
which leads to the subset $\widetilde{\mathcal S}_p(t) \subset \mathcal S_p(f)$
$$\widetilde{\mathcal S}_p(f) = \{s\in \mathcal C^1(\R^+, \R^n)\; ; \; \lim_{t\to\infty} s(t) = 0 \textrm{ and } \eqref{eq:santcurve} \textrm{ holds  at every } t>0, z\in \R^n\}.$$

A natural candidate, besides our $p$-Santal\'o points $s_p(f_t)$ themselves, for which both terms are zero, would be a curve satisfying the equation 
\begin{equation}
\label{eq:shift_alt}
 s'(t)=-\frac{p}{2q}\nabla_z Q(t, s(t))= \frac{p}{2}\int_{\R^n}x\frac{\lap(\tau_{s(t)}f_t)(x)}{\int_{\R^n}\lap(\tau_{s(t)}f_t)(x) dx}dx. 
\end{equation}
We leave open the following natural question: does this time-dependent gradient flow equation have at least one (smooth enough) solution on $\R^+$ with 
$\lim_{t\to \infty} s(t) = 0$? In the case $p=0$, the equation becomes
$$s'(t)=\frac12 \nabla\log L((f_t)^\square)(s(t)) .$$ 
In all cases, one probably needs to understand the time dependent log-Laplace transform $Q(t,z)$. 

%%%%%%%%%%%%%%%%

\medskip

\noindent {\bf Acknowledgments: } We thank Hiroshi Tsuji for his useful comments on  the first draft of the present manuscript, and the anonymous referee for her or his useful remarks.

\bibliographystyle{acm}
\bibliography{references}

%%%%%%%%%%%%%%%%

\medskip
\noindent Dario Cordero-Erausquin 
\\
Institut de Math\'ematiques de Jussieu (IMJ-PRG), Sorbonne Universit\'e, CNRS,
4 Place Jussieu, 75252 Paris, France.
\\
E-mail address: dario.cordero@imj-prg.fr
\vspace{2mm}
\\
\noindent Matthieu Fradelizi
\\
LAMA, Univ Gustave Eiffel, Univ Paris Est Creteil, CNRS, F-77447 Marne-la-Vall\'ee, France.
\\
E-mail address: matthieu.fradelizi@univ-eiffel.fr
\vspace{2mm}
\\
\noindent Dylan Langharst
\\
Institut de Math\'ematiques de Jussieu (IMJ-PRG), Sorbonne Universit\'e, CNRS,
4 Place Jussieu, 75252 Paris, France.
\\
E-mail address: dylan.langharst@imj-prg.fr

\end{document}